%% file: main.tex
\documentclass{article}
\usepackage{graphicx} % Required for inserting images
\usepackage{hyperref}
 \usepackage{bm}
 \usepackage{tikz}
 \usepackage{graphicx}
 \usepackage{float}
\usepackage{amsmath,amssymb,amsthm,siunitx}
\usetikzlibrary{calc,backgrounds}
\usepackage{subcaption}
\usepackage{pgfplots}

\newcommand{\dpartial}[2]{\frac{\partial{#1}}{\partial{#2}}}
\newcommand{\J}{J}
\newcommand{\Jmin}{J_{\text{min}}}

\def\WIi{W^{1,\infty}}
\def\WIIi{W^{2,\infty}}
\def\B{\mathcal{B}}

\newtheorem{theorem}{Theorem}
\newtheorem{lemma}{Lemma}
\newtheorem{remark}{Remark}

\title{Taming Slivers: A Robust TFEM Framework for Reliable Computations on Degenerate Tetrahedral Meshes}

\author{
Antoine Quiriny\thanks{UCLouvain, Institute of Mechanics, Materials and Civil Engineering (iMMC), Louvain-la-Neuve, Belgium}
\and
Jonathan Lambrechts\footnotemark[1]
\and
Nicolas Mo\"es\footnotemark[1]
\and
V\'aclav Ku\v{c}era\thanks{Charles University, Department of Mathematics and Physics, Prague, Czech Republic}
\and
Jean-Fran\c{c}ois Remacle\footnotemark[1]
}

\begin{document}

\maketitle

\input{intro}
\input{slivers}

\input{degenerated}

\input{Isolated_slivers}
\input{tfem}
\input{stresstest}
\input{physics}
\input{conclusion}

\appendix
\renewcommand{\thesection}{A\arabic{section}}
\section*{Appendices}
\addcontentsline{toc}{section}{Appendices}
\input{minmax}

\bibliographystyle{plain}
\bibliography{main}

\end{document}

%% file: intro.tex
\section{Introduction and Motivation}
\label{sec:intro}

Three–dimensional Delaunay triangulation is a fundamental tool for automatic tetrahedral mesh generation.
It guarantees a number of desirable combinatorial and geometric properties, including uniqueness (under general position assumptions), local conformity to the empty-sphere condition, and compatibility with incremental refinement schemes \cite{Shewchuk1998_TetrahedralMeshGeneration}.
However, unlike its two-dimensional counterpart, the 3D Delaunay triangulation \emph{does not} maximize the minimum dihedral angle or provide any general bound on element shape.
As a consequence, even a perfectly regular distribution of points can yield extremely distorted tetrahedra, known as \emph{slivers}.

Slivers are traditionally regarded as the sworn enemies of numerical simulation.
They are believed to corrupt interpolation accuracy, to deteriorate the conditioning of the discrete system, and to threaten the stability of explicit time‐integration schemes.

Ultimately, despite the wide range of available techniques (see \S \ref{sec:slivers} for a concise review), complete elimination of slivers in general geometries remains impossible.
In practice, more than 90\% of slivers can be removed very efficiently through local topological modifications such as those described in \cite{FreitagOllivierGooch1997_MeshImprovement}.
However, attempting to eliminate every single sliver typically requires runtimes far exceeding those needed to generate the initial mesh itself, with no guarantee of success or even of preserving mesh validity.

In our view, it is neither realistic nor desirable to place the full responsibility for numerical robustness on the mesh generator alone.
A finite element mesh containing several million tetrahedra should not be rejected by a solver merely because it includes a few dozen slivers.
We need more tolerant numerical simulation technologies capable of handling nearly‐degenerate or zero‐volume elements.

Recent research efforts have focused on increasing the resilience of the finite element method to severely distorted and even degenerate tetrahedral elements.  Among these approaches, the Tempered Finite Element Method (TFEM) has shown that accurate and stable solutions can still be obtained in the presence of highly ill-shaped elements.
In the seminal paper on the Tempered Finite Element Method~\cite{quiriny2024tempered}, the authors developed theoretical foundations \emph{showing that finite element discretizations can remain valid on a much broader range of meshes than traditionally assumed.}
In particular, isolated zero- or near-zero-size elements do not cause any specific numerical difficulty, and even in the presence of continuous bands of degenerate elements, the finite element solution can still achieve optimal convergence rates.

In their first paper on TFEM, the authors focused primarily on the theoretical aspects of the approach.  The numerical demonstrations were essentially two-dimensional and involved relatively simple problems.  In the present work, we extend TFEM to far more demanding physical settings: incompressible fluid flows, Cahn-Hilliard phase-field problems, wave propagation, and vibro-acoustic fluid-structure interactions.

The paper is organized as follows.
In \S\ref{sec:slivers}, we provide a concise overview of the state of the art in sliver elimination and present examples of realistic geometries, including the typical number of slivers observed and the efficiency of the algorithms implemented in \texttt{gmsh} \cite{geuzaine2009gmsh} for handling them — together with the amount of dedicated code required, which is by no means negligible.

% Subsequently, we demonstrate in \S\ref{sec:tfem} how TFEM naturally handles slivers.
% We show (i) that TFEM removes all potential issues associated with degenerate elements, such as locking or divisions by zero, and (ii) that it does not suffer from ill-conditioning of the resulting linear systems.
% Concerning time integration, we employ an implicit scheme in this work.
% While explicit formulations could also be stabilized through IMEX-type schemes, such developments lie beyond the scope of the present paper.
Subsequently, we demonstrate in \S\ref{sec:tfem} how TFEM naturally handles slivers.
We show that TFEM eliminates the numerical difficulties traditionally associated
with degenerate elements, including locking phenomena and divisions by zero.
The effect of slivers on the conditioning of the resulting linear systems is not
addressed here, as this aspect has already been analyzed in the seminal TFEM
work, where appropriate preconditioning strategies were shown to provide robust
solver performance despite the presence of highly degenerate elements.
For time integration, we employ an implicit scheme in this work. While
explicit formulations could also be stabilized through IMEX-type schemes \cite{ASCHER1997151}, such
developments lie beyond the scope of the present paper.

The remainder of the paper is devoted to a series of application-driven numerical experiments designed to assess the performance of TFEM across a broad spectrum of physical regimes. In \S\ref{sec:physics}, we consider progressively more complex problems, including incompressible flows, phase-field dynamics governed by the Cahn–Hilliard equations, transient wave propagation, and coupled vibro-acoustic fluid–structure interactions. These examples have been deliberately selected to span diffusive, advective, and hyperbolic behaviors, as well as multiphysics coupling across interfaces. For each case, we emphasize configurations in which sliver elements are known to severely impair standard finite element discretizations. The objective is not only to demonstrate the robustness of TFEM in the presence of highly distorted meshes, but also to show that it consistently delivers accurate and physically meaningful solutions without requiring mesh preprocessing or ad hoc stabilization.

%nico add ref for IMEX
%nico l'intro se lit super bien. 

%% file: slivers.tex
\section{Slivers}
\label{sec:slivers}
Sliver elements are a well-known geometric artifact of three-dimensional Delaunay tetrahedrizations and have been the subject of extensive research in computational geometry and mesh generation. Understanding their origin requires revisiting some fundamental properties of Delaunay tessellations and the differences between their behavior in two and three dimensions. This section therefore recalls the basic definitions of Delaunay tessellations, explains why slivers cannot arise in two-dimensional Delaunay triangulations but may naturally appear in three-dimensional tetrahedrizations, and provides a concise review of the principal techniques that have been developed to suppress or eliminate them.
\subsection{Delaunay tessellation in dimension $d$}
A \emph{Delaunay tessellation} of a finite point set 
$P \subset \mathbb{R}^d$ is a simplicial complex $\mathcal{T}$ whose 
vertices are the points of $P$ and that satisfies the \emph{empty circumsphere
condition}: for every $d$-simplex $\sigma \in \mathcal{T}$, there exists a 
unique 
%nico es-tu du "unique" ? Tu donnais des cas vendredi de sphere circonscrite non unique.
$d$-dimensional sphere $S(\sigma)$ passing through all vertices of 
$\sigma$, and the interior of $S(\sigma)$ contains no point of $P$. 
Among all possible triangulations of $P$, the Delaunay tessellation is the one 
in which every simplex satisfies this empty circumsphere property.

\subsection{Delaunay triangulations in dimension two}

In two dimensions, Delaunay triangulations are endowed with a 
fundamental property called the \emph{min--max angle property} 
(a concise proof is given in Appendix \ref{annex:1} for completeness). 
%nico-done explique que tu mets la concise proof car elle est est difficile à trouver dans le littérature.
This property prevents Delaunay triangulations from containing triangles with extremely small or extremely
large angles, since any locally non-optimal configuration would violate the 
empty--circle condition. As a consequence, degenerate elements -- 
triangles with vanishing angles -- cannot appear in a proper two-dimensional 
Delaunay triangulation.

\subsection{Slivers in Delaunay tetrahedrizations}

A tetrahedron $K \subset \mathbb{R}^3$ with vertices 
$\{\bm{x}_1,\bm{x}_2,\bm{x}_3,\bm{x}_4\}$ is called a \emph{sliver} if its four vertices lie almost on a
%nico-Done lie almost on a common circle(or sphere) is  %not clear
%nico on pourrait définir le critère comme le volume %de l'élément divisé par la racine carrée du produit %des 6 longueurs d'arêtes. Cela donne un critère %sans unité. On peut aussi considéré le volume %divisé 
%par la racine 3/8 du produit des aires de faces
% le volume étant le produit base hauteur, on peut aussi faire intervenir les hauteur. Genre critère min hauteur sur max edge.
%nico un vignoble près de Namur s'appele le dièdre noir en référence à un mont local.
common circle so that $K$ has:
\[
\operatorname{vol}(K) = \frac{1}{6} |\det(\bm{x}_2-\bm{x}_1,\, \bm{x}_3-\bm{x}_1,\, \bm{x}_4-\bm{x}_1)| 
\approx 0,
\]
whereas all edge lengths satisfy
\[
\|\bm{x}_i - \bm{x}_j\| \approx O(1) \quad \text{for all } i \neq j.
\]
In this situation, all dihedral angles of $K$ are close to either
$0$ or $\pi$, and the tetrahedron is nearly flat despite having no small edges/faces.

Slivers can appear in Delaunay tetrahedrizations because the empty--circumsphere condition does not prevent four points from lying almost on the same circle, thus forming a sliver. Even well-spaced point sets may produce tetrahedra with arbitrarily poor dihedral angles and volumes, as shown in Figure \ref{fig:many_slivers} for a regular cube. Unlike elongated "needles", a sliver does not have a large circumradius: its circumscribed sphere can be quite small, comparable to that of well-shaped tetrahedra. The degeneracy arises because the sphere's center happens to lie very close to the plane of the four vertices, so the tetrahedron has almost zero height even though all edges are short and nearly equal. As a consequence, geometric quality measures based solely on edge lengths may fail to identify such elements. A more discriminating indicator is provided by the ratio between the inscribed and circumscribed sphere radii. Indeed, the \emph{aspect ratio} $\gamma$ of a tetrahedron is defined as
\[
\gamma = {3} \frac{r_{\mathrm{in}}}{r_{\mathrm{circ}}}.
\]
where $r_in$ and $r_circ$ denote the radii of the inscribed and circumscribed spheres, respectively. The normalization factor is chosen so that $\gamma \in [0,1]$, with $\gamma=1$ for a regular tetrahedron. For a sliver, the near-coplanarity of the vertices causes the tetrahedral volume, and hence $r_in$, to become very small, while $r_circ$ remains of the same order as for well-shaped tetrahedra. Consequently, $\lambda$ approaches zero, correctly revealing the poor element quality despite the absence of excessively small edges.
Such elements are perfectly valid Delaunay tetrahedra—since no vertex lies inside their circumscribed sphere—but they can have dihedral angles arbitrarily close to $0$ or $\pi$.  
In summary, slivers are an intrinsic geometric artifact of three-dimensional Delaunay tetrahedrizations: the Delaunay property alone does not enforce good element shape in dimension three. An example of a sliver in a Delaunay tetrahedrization is also shown in Figure \ref{fig:0}.
\input{many}
\begin{figure}
    \centering
    \includegraphics[width=0.85\linewidth]{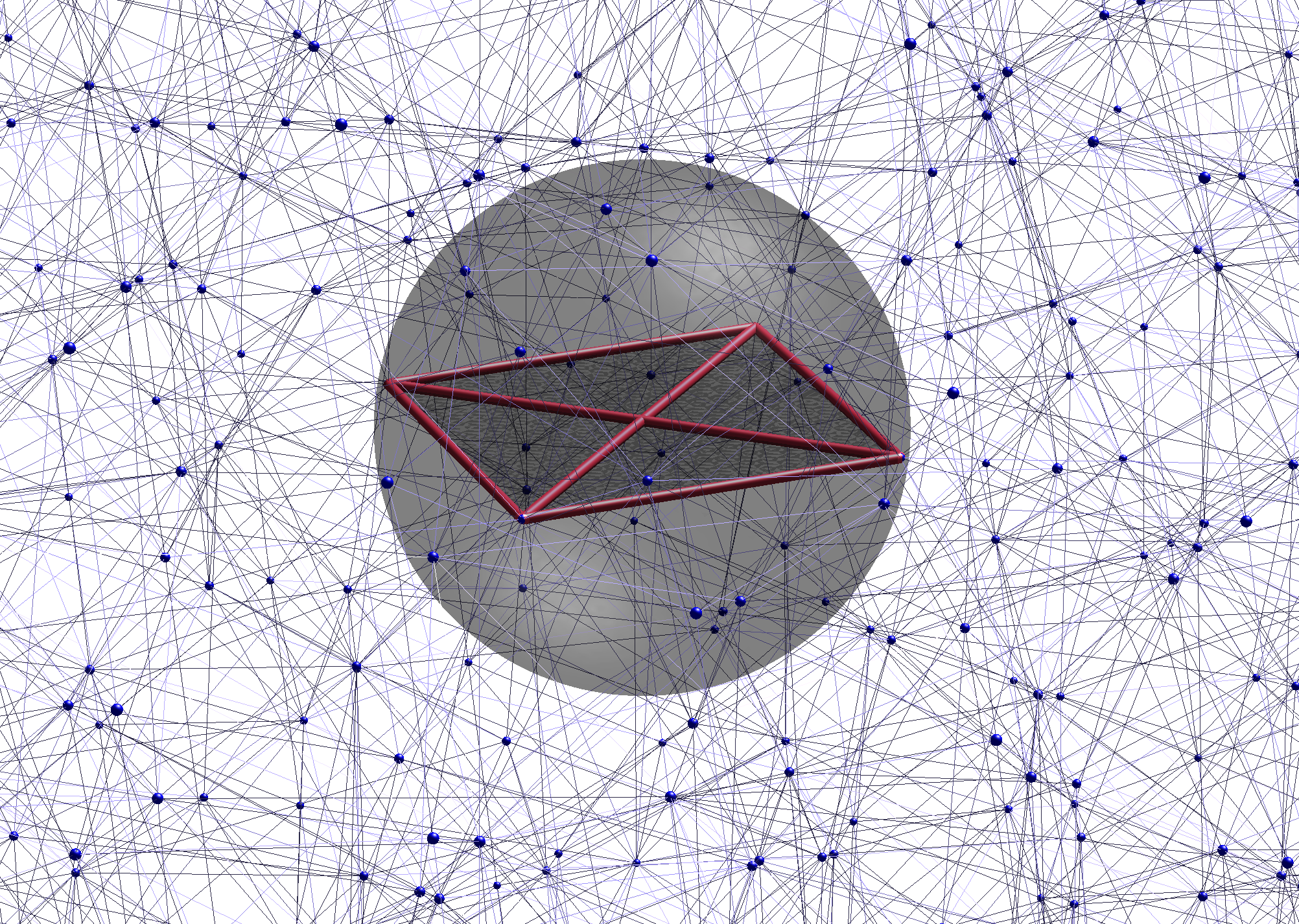}
    \caption{A sliver in a Delaunay tetrahedrization and its circumsphere. Despite the poor quality of the tetrahedron, no mesh vertex lies inside the sphere.}
    %nico - Done: add "The circumsphere does not contain any node in its interior." 
    \label{fig:0}
\end{figure}

\subsection{Sliver elimination -- A concise review}
Over the years, numerous strategies have been developed to mitigate or eliminate slivers.  
They can be broadly classified into three complementary families:  
(i)~methods that modify the geometry of the triangulation itself through weighting or refinement,  
(ii)~methods that alter the local connectivity or vertex placement through topological and optimization-based operations,  
and (iii)~variational or energy-minimization approaches that directly penalize poor element shapes.

%\subsection{Weighted Delaunay and Refinement-Based Methods}

The first family of approaches modifies the construction of the triangulation so that slivers are no longer Delaunay with respect to a suitably chosen metric.  
The most influential among them is the \emph{sliver exudation} technique introduced by Cheng, Dey, Edelsbrunner, Facello, and Teng~\cite{ChengDeyEdelsbrunnerFacelloTeng2000}.  
This method associates a positive weight \(w_i\) with each vertex and constructs a weighted (regular) triangulation instead of the ordinary Delaunay triangulation.  
By iteratively adjusting these weights -- a process known as "pumping" -- one can remove tetrahedra that correspond to poor-quality configurations.
The approach provides theoretical guarantees that all remaining elements will have bounded aspect ratios under certain sampling conditions. 
%nico: sampling ? AQ: sampling des noeuds du maillage comme j'ai compris
Refinement-based algorithms such as those of Shewchuk~\cite{Shewchuk1998_TetrahedralMeshGeneration} or later weighted-Delaunay refinement schemes~\cite{ChengDeyRamosRay2007} extend this concept to boundary-constrained and adaptive contexts.  
The main limitations of these methods are their complexity and the potential proliferation of additional vertices, which may increase mesh size and reduce geometric fidelity on curved boundaries.

%\subsection{Local Topological and Optimization-Based Methods}

A second, very practical class of algorithms improves mesh quality after the initial triangulation by locally modifying connectivity and/or vertex positions.  
Classical operations include face and edge flips, vertex smoothing, and optimization-based relocation.  
Freitag and Ollivier-Gooch~\cite{FreitagOllivierGooch1997_MeshImprovement} and later Freitag and Knupp~\cite{FreitagKnupp2002} formulated mesh improvement as a local optimization of quality metrics such as the element condition number or the mean ratio.  
Klingner~\cite{Klingner2008_AggressiveTetrahedralMeshImprovement} proposed an aggressive sequence of flips and optimization steps that significantly reduces the number of poor elements in practical meshes.
While such local improvements can remove many suboptimal tetrahedra, they often fail to eliminate the worst slivers because the configuration space of flips is limited: only a few connectivity changes are allowed at a time, and the energy landscape is highly non-convex.  
To overcome this limitation, more general reconnection schemes have been proposed.  
Among them, the \emph{Small Polyhedron Reconnection} (SPR) operation introduced by Liu and Sun~\cite{LiuSun2006_SPR} is particularly notable.  
SPR extracts a small polyhedral cavity surrounding low-quality elements and exhaustively enumerates all possible tetrahedralizations of that cavity, selecting the one that maximizes a chosen quality metric.  
Later implementations by Marot, Verhetsel, and Remacle~\cite{MarotVerhetselRemacle2019_SPR} drastically accelerated this enumeration process, making SPR practical for large-scale meshes.  
Related techniques such as the Multi-Face Reconstruction (MFRC) operator~\cite{MaWang2021_MFRC} extend this idea by reconstructing larger cavities in a single step, improving the worst elements without inserting additional vertices. In
the same spirit, frameworks such as HXT~\cite{MarotPelerinRemacle2019} integrate these reconnection operations into high-performance meshing pipelines, allowing hybrid workflows that combine Delaunay refinement, flipping, and cavity-based optimization.

\input{aspect_indu_mesh}
%\subsection{Energy-Based and Variational Optimization Approaches}

A third family formulates sliver suppression as a global or semi-global optimization problem.  
Here, the mesh is treated as a set of vertex positions \(\{x_i\}\) minimizing an energy functional that measures element distortion or shape irregularity.  
Typical choices of energy include the condition number, the mean-ratio metric, or physically inspired shape-matching potentials.  
Ni~\cite{Ni2017_SliverSuppressing} introduced a gradient-based shape-matching energy that specifically penalizes sliver-like configurations while maintaining overall mesh regularity.  
More recent works minimize radius-ratio or squared-volume energies to eliminate degenerate tetrahedra~\cite{WangChenWei2025_RadiusRatioSliverElimination,Yu2024_WSVM}.  
These methods have the advantage of producing smooth, high-quality meshes without introducing new vertices or modifying topology, but they require non-convex optimization and can be both slow and sensitive to initialization.  
Combining them with topological operations (e.g. SPR or flipping) often yields the best results.

\subsection{Sliver elimination in {\texttt{gmsh}}}
Since its early versions, \texttt{gmsh} has implemented the approach proposed in \cite{FreitagOllivierGooch1997_MeshImprovement}, one of the first systematic frameworks for tetrahedral mesh improvement combining local topological flips and optimization-based smoothing. Since 2019, a more aggressive mesh improvement technique has been added that uses Small Polyhedron Reconnection (SPR, \cite{MarotVerhetselRemacle2019_SPR}). This latter method requires computing, on the fly, all possible tetrahedralizations of a local cavity.
The SPR algorithm starts from small cavities and progressively enlarges them if no configuration leads to an improvement in mesh quality.
Given the highly combinatorial nature of the problem, it is clear that arbitrarily increasing the cavity size rapidly results in long — or even prohibitively long — computation times.

In smooth geometries, the only tetrahedra that have a very poor aspect ratio are slivers. Figure \ref{fig:aspect_ratio_indu} shows examples of aspect-ratio statistics for three simple geometries that contain about two million tetrahedra each. By applying Gmsh's full suite of mesh-improvement tools with a threshold \(\gamma = 0.4\), we are able to eliminate all slivers.
Nevertheless, the resulting meshes do not meet the \(\gamma = 0.4\) target for all tetrahedra, which is entirely expected: to the best of our knowledge, no algorithm guarantees such a quality bound even for very simple geometries. More precisely, among all existing approaches, the \emph{sliver exudation} algorithm of Cheng \emph{et al.}~\cite{ChengDeyEdelsbrunnerFacelloTeng2000}
remains the only method that provides a formal guarantee of sliver-free tetrahedral meshes.  
It ensures the existence of a positive lower bound on the aspect ratio
$\gamma_{\min} > 0$
independent of the geometry or point distribution, thereby excluding tetrahedra of vanishing volume and arbitrarily small dihedral angles.

%% file: many.tex
% Preamble:
% \usepackage{tikz}
% \usepackage{graphicx}

\begin{figure}[h!]
\centering
\begin{tikzpicture}

    \begin{axis}[
            xlabel={Aspect-ratio $\gamma$},
            ylabel={Density},
            legend style={at={(0.65,0.4)},anchor=west},
            xmin=0,
            xmax=1,
            grid=none,
            width=\textwidth,
            height=0.8\textwidth,
            y label style={at={(axis description cs:0.22,0.85)},rotate=-90,anchor=south},
            legend style={draw=none}
        ]
        %regular
        \addplot[
            color=blue,
            line width = 1.5pt
            ]
            coordinates {
            (8.333333e-03, 8.939499e+00) (2.500000e-02, 4.015994e+00) (4.166667e-02, 1.846314e+00) (5.833333e-02, 4.589708e-01) (7.500000e-02, 1.043115e-02) (9.166667e-02, 0.000000e+00) (1.083333e-01, 0.000000e+00) (1.250000e-01, 0.000000e+00) (1.416667e-01, 0.000000e+00) (1.583333e-01, 0.000000e+00) (1.750000e-01, 0.000000e+00) (1.916667e-01, 0.000000e+00) (2.083333e-01, 0.000000e+00) (2.250000e-01, 0.000000e+00) (2.416667e-01, 0.000000e+00) (2.583333e-01, 0.000000e+00) (2.750000e-01, 0.000000e+00) (2.916667e-01, 0.000000e+00) (3.083333e-01, 0.000000e+00) (3.250000e-01, 0.000000e+00) (3.416667e-01, 0.000000e+00) (3.583333e-01, 0.000000e+00) (3.750000e-01, 0.000000e+00) (3.916667e-01, 0.000000e+00) (4.083333e-01, 0.000000e+00) (4.250000e-01, 0.000000e+00) (4.416667e-01, 0.000000e+00) (4.583333e-01, 0.000000e+00) (4.750000e-01, 0.000000e+00) (4.916667e-01, 0.000000e+00) (5.083333e-01, 0.000000e+00) (5.250000e-01, 0.000000e+00) (5.416667e-01, 0.000000e+00) (5.583333e-01, 0.000000e+00) (5.750000e-01, 2.086231e-02) (5.916667e-01, 3.442281e-01) (6.083333e-01, 2.096662e+00) (6.250000e-01, 4.725313e+00) (6.416667e-01, 2.805981e+00) (6.583333e-01, 3.233658e-01) (6.750000e-01, 0.000000e+00) (6.916667e-01, 5.215577e-02) (7.083333e-01, 3.588317e+00) (7.250000e-01, 1.375869e+01) (7.416667e-01, 1.022253e+01) (7.583333e-01, 5.319889e+00) (7.750000e-01, 5.632823e-01) (7.916667e-01, 1.043115e-02) (8.083333e-01, 0.000000e+00) (8.250000e-01, 0.000000e+00) (8.416667e-01, 0.000000e+00) (8.583333e-01, 0.000000e+00) (8.750000e-01, 0.000000e+00) (8.916667e-01, 0.000000e+00) (9.083333e-01, 0.000000e+00) (9.250000e-01, 0.000000e+00) (9.416667e-01, 0.000000e+00) (9.583333e-01, 0.000000e+00) (9.750000e-01, 0.000000e+00) (9.916667e-01, 8.970793e-01) 
            };

        \node [align=center] at (rel axis cs: 
            0.35, 0.6) {\includegraphics[width=5cm, trim={1000 200 400 1200},clip]{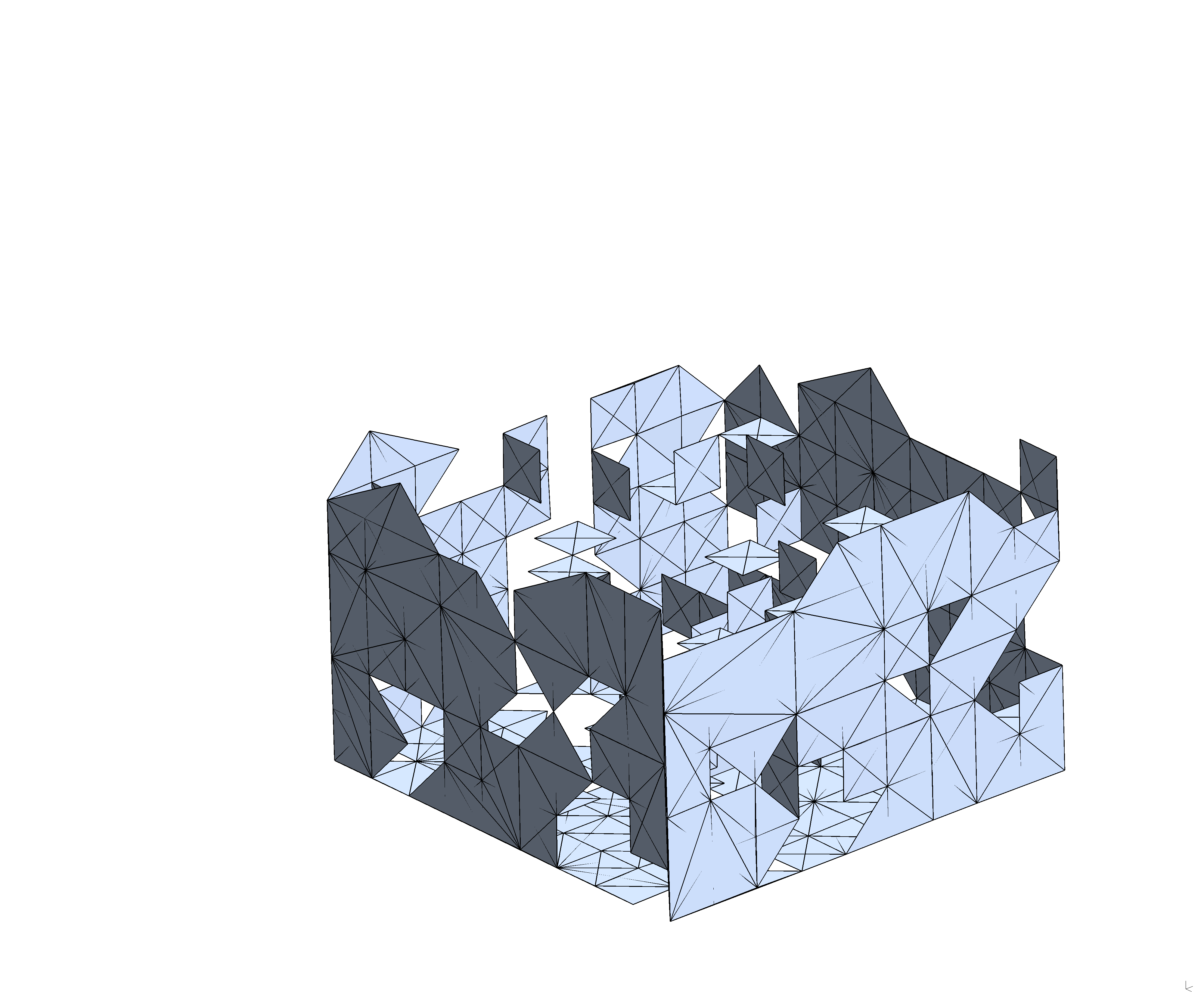}};

    \end{axis}

\end{tikzpicture}
\caption{Aspect-ratio distribution for a Delaunay tetrahedrization of a uniform grid. Only slivers are shown.}
\label{fig:many_slivers}
\end{figure}

%% file: aspect_indu_mesh.tex
\begin{figure}
    \centering
\begin{tikzpicture}
        \begin{axis}[
            xlabel={Aspect ratio $\gamma$},
            ylabel={Density},
            legend style={at={(0.65,0.4)},anchor=west},
            xmin=0, xmax=1,
            width=\textwidth,
            height=0.6\textwidth,
            x label style={at={(axis description cs:0.5,0.0)},anchor=north},
            y label style={at={(axis description cs:0.2,0.85)},rotate=-90,anchor=south},
        ]
        % collector no opti
        \addplot[
            color=blue,
            dashed, 
            line width=1pt
            ]
            coordinates {
                (8.333333e-03, 5.039424e-03) (2.500000e-02, 1.441098e-02) (4.166667e-02, 2.448983e-02) (5.833333e-02, 2.935243e-02) (7.500000e-02, 3.819353e-02) (9.166667e-02, 5.030583e-02) (1.083333e-01, 5.879328e-02) (1.250000e-01, 7.214333e-02) (1.416667e-01, 7.992350e-02) (1.583333e-01, 8.602385e-02) (1.750000e-01, 9.203579e-02) (1.916667e-01, 9.866662e-02) (2.083333e-01, 1.188243e-01) (2.250000e-01, 1.232449e-01) (2.416667e-01, 1.350919e-01) (2.583333e-01, 1.449055e-01) (2.750000e-01, 1.579904e-01) (2.916667e-01, 1.785901e-01) (3.083333e-01, 1.877848e-01) (3.250000e-01, 1.987478e-01) (3.416667e-01, 2.083846e-01) (3.583333e-01, 2.310178e-01) (3.750000e-01, 2.497609e-01) (3.916667e-01, 2.548003e-01) (4.083333e-01, 2.826498e-01) (4.250000e-01, 3.120022e-01) (4.416667e-01, 3.284467e-01) (4.583333e-01, 3.623965e-01) (4.750000e-01, 3.866211e-01) (4.916667e-01, 4.256987e-01) (5.083333e-01, 4.567309e-01) (5.250000e-01, 5.166736e-01) (5.416667e-01, 5.665373e-01) (5.583333e-01, 6.351442e-01) (5.750000e-01, 7.300976e-01) (5.916667e-01, 8.357487e-01) (6.083333e-01, 9.849863e-01) (6.250000e-01, 1.125383e+00) (6.416667e-01, 1.298580e+00) (6.583333e-01, 1.484066e+00) (6.750000e-01, 1.720477e+00) (6.916667e-01, 1.946986e+00) (7.083333e-01, 2.159879e+00) (7.250000e-01, 2.361898e+00) (7.416667e-01, 2.567100e+00) (7.583333e-01, 2.788746e+00) (7.750000e-01, 3.012957e+00) (7.916667e-01, 3.201184e+00) (8.083333e-01, 3.339900e+00) (8.250000e-01, 3.349714e+00) (8.416667e-01, 3.425394e+00) (8.583333e-01, 3.283759e+00) (8.750000e-01, 3.046288e+00) (8.916667e-01, 2.769384e+00) (9.083333e-01, 2.368176e+00) (9.250000e-01, 1.918960e+00) (9.416667e-01, 1.511827e+00) (9.583333e-01, 1.010360e+00) (9.750000e-01, 5.096007e-01) (9.916667e-01, 1.371254e-01)
            };

        % collector opti
        \addplot[
            color=blue,
            line width=1pt
            ]
            coordinates {
                (8.333333e-03, 0.000000e+00) (2.500000e-02, 0.000000e+00) (4.166667e-02, 0.000000e+00) (5.833333e-02, 0.000000e+00) (7.500000e-02, 0.000000e+00) (9.166667e-02, 0.000000e+00) (1.083333e-01, 0.000000e+00) (1.250000e-01, 0.000000e+00) (1.416667e-01, 0.000000e+00) (1.583333e-01, 0.000000e+00) (1.750000e-01, 0.000000e+00) (1.916667e-01, 0.000000e+00) (2.083333e-01, 0.000000e+00) (2.250000e-01, 0.000000e+00) (2.416667e-01, 0.000000e+00) (2.583333e-01, 0.000000e+00) (2.750000e-01, 9.321214e-05) (2.916667e-01, 0.000000e+00) (3.083333e-01, 0.000000e+00) (3.250000e-01, 6.524850e-04) (3.416667e-01, 9.321214e-04) (3.583333e-01, 1.398182e-03) (3.750000e-01, 3.914910e-03) (3.916667e-01, 6.058789e-03) (4.083333e-01, 1.510037e-02) (4.250000e-01, 2.861613e-02) (4.416667e-01, 5.173274e-02) (4.583333e-01, 1.014148e-01) (4.750000e-01, 2.217517e-01) (4.916667e-01, 4.069642e-01) (5.083333e-01, 4.969139e-01) (5.250000e-01, 5.438928e-01) (5.416667e-01, 6.281566e-01) (5.583333e-01, 7.146575e-01) (5.750000e-01, 8.170044e-01) (5.916667e-01, 9.468489e-01) (6.083333e-01, 1.084244e+00) (6.250000e-01, 1.229188e+00) (6.416667e-01, 1.437984e+00) (6.583333e-01, 1.621891e+00) (6.750000e-01, 1.896028e+00) (6.916667e-01, 2.152641e+00) (7.083333e-01, 2.362555e+00) (7.250000e-01, 2.567435e+00) (7.416667e-01, 2.834488e+00) (7.583333e-01, 3.042724e+00) (7.750000e-01, 3.252265e+00) (7.916667e-01, 3.443909e+00) (8.083333e-01, 3.590159e+00) (8.250000e-01, 3.625206e+00) (8.416667e-01, 3.600039e+00) (8.583333e-01, 3.466000e+00) (8.750000e-01, 3.198388e+00) (8.916667e-01, 2.866926e+00) (9.083333e-01, 2.442251e+00) (9.250000e-01, 2.014594e+00) (9.416667e-01, 1.577988e+00) (9.583333e-01, 1.041086e+00) (9.750000e-01, 5.244115e-01) (9.916667e-01, 1.414960e-01)
            };
            
        %sph no opti
        \addplot[
            color=green,
            dashed,
            line width=1pt
            ]
            coordinates {
                (8.333333e-03, 3.733649e-03) (2.500000e-02, 1.247444e-02) (4.166667e-02, 2.083897e-02) (5.833333e-02, 2.749586e-02) (7.500000e-02, 3.603405e-02) (9.166667e-02, 4.460119e-02) (1.083333e-01, 5.070816e-02) (1.250000e-01, 5.968050e-02) (1.416667e-01, 6.905804e-02) (1.583333e-01, 7.684371e-02) (1.750000e-01, 8.471621e-02) (1.916667e-01, 9.704593e-02) (2.083333e-01, 1.061630e-01) (2.250000e-01, 1.166404e-01) (2.416667e-01, 1.239919e-01) (2.583333e-01, 1.373925e-01) (2.750000e-01, 1.476962e-01) (2.916667e-01, 1.611258e-01) (3.083333e-01, 1.751052e-01) (3.250000e-01, 1.885637e-01) (3.416667e-01, 2.006619e-01) (3.583333e-01, 2.162043e-01) (3.750000e-01, 2.352488e-01) (3.916667e-01, 2.502703e-01) (4.083333e-01, 2.719775e-01) (4.250000e-01, 2.936558e-01) (4.416667e-01, 3.153052e-01) (4.583333e-01, 3.357969e-01) (4.750000e-01, 3.619903e-01) (4.916667e-01, 4.078650e-01) (5.083333e-01, 4.390945e-01) (5.250000e-01, 4.789780e-01) (5.416667e-01, 5.336224e-01) (5.583333e-01, 5.875722e-01) (5.750000e-01, 6.459502e-01) (5.916667e-01, 7.296824e-01) (6.083333e-01, 8.213160e-01) (6.250000e-01, 9.227033e-01) (6.416667e-01, 1.050487e+00) (6.583333e-01, 1.201482e+00) (6.750000e-01, 1.377311e+00) (6.916667e-01, 1.590795e+00) (7.083333e-01, 1.808967e+00) (7.250000e-01, 2.083926e+00) (7.416667e-01, 2.348176e+00) (7.583333e-01, 2.632107e+00) (7.750000e-01, 2.939395e+00) (7.916667e-01, 3.202024e+00) (8.083333e-01, 3.456578e+00) (8.250000e-01, 3.637008e+00) (8.416667e-01, 3.726876e+00) (8.583333e-01, 3.693563e+00) (8.750000e-01, 3.529803e+00) (8.916667e-01, 3.229056e+00) (9.083333e-01, 2.800034e+00) (9.250000e-01, 2.293821e+00) (9.416667e-01, 1.731284e+00) (9.583333e-01, 1.140007e+00) (9.750000e-01, 5.920294e-01) (9.916667e-01, 1.474647e-01)
            };

        %sph opti
        \addplot[
            color=green,
            line width=1pt
            ]
            coordinates {
                 (8.333333e-03, 0.000000e+00) (2.500000e-02, 0.000000e+00) (4.166667e-02, 0.000000e+00) (5.833333e-02, 0.000000e+00) (7.500000e-02, 0.000000e+00) (9.166667e-02, 0.000000e+00) (1.083333e-01, 0.000000e+00) (1.250000e-01, 0.000000e+00) (1.416667e-01, 0.000000e+00) (1.583333e-01, 0.000000e+00) (1.750000e-01, 0.000000e+00) (1.916667e-01, 0.000000e+00) (2.083333e-01, 0.000000e+00) (2.250000e-01, 0.000000e+00) (2.416667e-01, 0.000000e+00) (2.583333e-01, 0.000000e+00) (2.750000e-01, 6.085943e-05) (2.916667e-01, 1.217189e-04) (3.083333e-01, 3.651566e-04) (3.250000e-01, 5.173051e-04) (3.416667e-01, 7.911725e-04) (3.583333e-01, 8.824617e-04) (3.750000e-01, 1.551915e-03) (3.916667e-01, 4.868754e-03) (4.083333e-01, 1.055911e-02) (4.250000e-01, 2.154424e-02) (4.416667e-01, 3.739812e-02) (4.583333e-01, 7.869124e-02) (4.750000e-01, 1.906422e-01) (4.916667e-01, 4.018548e-01) (5.083333e-01, 4.627142e-01) (5.250000e-01, 5.085718e-01) (5.416667e-01, 5.708614e-01) (5.583333e-01, 6.389023e-01) (5.750000e-01, 7.068822e-01) (5.916667e-01, 8.021272e-01) (6.083333e-01, 9.159648e-01) (6.250000e-01, 1.033819e+00) (6.416667e-01, 1.183564e+00) (6.583333e-01, 1.355552e+00) (6.750000e-01, 1.548325e+00) (6.916667e-01, 1.776182e+00) (7.083333e-01, 2.019498e+00) (7.250000e-01, 2.299360e+00) (7.416667e-01, 2.570063e+00) (7.583333e-01, 2.864988e+00) (7.750000e-01, 3.176527e+00) (7.916667e-01, 3.442939e+00) (8.083333e-01, 3.692494e+00) (8.250000e-01, 3.859279e+00) (8.416667e-01, 3.920929e+00) (8.583333e-01, 3.866186e+00) (8.750000e-01, 3.669093e+00) (8.916667e-01, 3.343678e+00) (9.083333e-01, 2.905246e+00) (9.250000e-01, 2.376469e+00) (9.416667e-01, 1.789997e+00) (9.583333e-01, 1.179882e+00) (9.750000e-01, 6.164147e-01) (9.916667e-01, 1.536396e-01)
            };

        % cr no opti
        \addplot[
            color=orange,
            dashed,
            line width=1pt
            ]
            coordinates {
                (8.333333e-03, 8.806909e-03) (2.500000e-02, 1.321036e-02) (4.166667e-02, 2.201727e-02) (5.833333e-02, 3.375982e-02) (7.500000e-02, 4.550236e-02) (9.166667e-02, 5.284145e-02) (1.083333e-01, 6.458400e-02) (1.250000e-01, 6.898745e-02) (1.416667e-01, 7.045527e-02) (1.583333e-01, 9.394036e-02) (1.750000e-01, 9.687599e-02) (1.916667e-01, 9.540818e-02) (2.083333e-01, 9.540818e-02) (2.250000e-01, 1.218289e-01) (2.416667e-01, 1.409105e-01) (2.583333e-01, 1.453140e-01) (2.750000e-01, 1.805416e-01) (2.916667e-01, 1.497174e-01) (3.083333e-01, 1.937520e-01) (3.250000e-01, 1.673313e-01) (3.416667e-01, 2.333831e-01) (3.583333e-01, 1.776060e-01) (3.750000e-01, 2.289796e-01) (3.916667e-01, 2.700785e-01) (4.083333e-01, 2.598038e-01) (4.250000e-01, 2.333831e-01) (4.416667e-01, 2.436578e-01) (4.583333e-01, 3.566798e-01) (4.750000e-01, 3.331947e-01) (4.916667e-01, 3.919074e-01) (5.083333e-01, 4.271351e-01) (5.250000e-01, 4.961225e-01) (5.416667e-01, 5.357536e-01) (5.583333e-01, 5.621743e-01) (5.750000e-01, 6.663894e-01) (5.916667e-01, 8.161069e-01) (6.083333e-01, 9.526139e-01) (6.250000e-01, 1.130220e+00) (6.416667e-01, 1.309294e+00) (6.583333e-01, 1.552952e+00) (6.750000e-01, 1.883211e+00) (6.916667e-01, 2.103383e+00) (7.083333e-01, 2.294200e+00) (7.250000e-01, 2.523179e+00) (7.416667e-01, 2.564278e+00) (7.583333e-01, 2.900409e+00) (7.750000e-01, 2.891602e+00) (7.916667e-01, 3.343690e+00) (8.083333e-01, 3.235071e+00) (8.250000e-01, 3.330479e+00) (8.416667e-01, 3.356900e+00) (8.583333e-01, 3.220393e+00) (8.750000e-01, 2.881327e+00) (8.916667e-01, 2.756562e+00) (9.083333e-01, 2.298603e+00) (9.250000e-01, 1.966876e+00) (9.416667e-01, 1.670377e+00) (9.583333e-01, 1.045086e+00) (9.750000e-01, 5.284145e-01) (9.916667e-01, 1.673313e-01)
            };
            
        % cr opti
        \addplot[
            color=orange,
            line width=1pt
            ]
            coordinates {
                (8.333333e-03, 0.000000e+00) (2.500000e-02, 0.000000e+00) (4.166667e-02, 0.000000e+00) (5.833333e-02, 0.000000e+00) (7.500000e-02, 0.000000e+00) (9.166667e-02, 0.000000e+00) (1.083333e-01, 0.000000e+00) (1.250000e-01, 0.000000e+00) (1.416667e-01, 0.000000e+00) (1.583333e-01, 0.000000e+00) (1.750000e-01, 0.000000e+00) (1.916667e-01, 0.000000e+00) (2.083333e-01, 0.000000e+00) (2.250000e-01, 2.717060e-05) (2.416667e-01, 2.717060e-05) (2.583333e-01, 5.434121e-05) (2.750000e-01, 1.358530e-04) (2.916667e-01, 8.151181e-05) (3.083333e-01, 2.445354e-04) (3.250000e-01, 3.803884e-04) (3.416667e-01, 6.520945e-04) (3.583333e-01, 8.694593e-04) (3.750000e-01, 2.200819e-03) (3.916667e-01, 5.597144e-03) (4.083333e-01, 1.152034e-02) (4.250000e-01, 2.032361e-02) (4.416667e-01, 3.994079e-02) (4.583333e-01, 8.172917e-02) (4.750000e-01, 1.935362e-01) (4.916667e-01, 3.962561e-01) (5.083333e-01, 4.566563e-01) (5.250000e-01, 5.056993e-01) (5.416667e-01, 5.568072e-01) (5.583333e-01, 6.251141e-01) (5.750000e-01, 7.079029e-01) (5.916667e-01, 8.042770e-01) (6.083333e-01, 9.176871e-01) (6.250000e-01, 1.060007e+00) (6.416667e-01, 1.212135e+00) (6.583333e-01, 1.383581e+00) (6.750000e-01, 1.582334e+00) (6.916667e-01, 1.812823e+00) (7.083333e-01, 2.055293e+00) (7.250000e-01, 2.325831e+00) (7.416667e-01, 2.590282e+00) (7.583333e-01, 2.904266e+00) (7.750000e-01, 3.192111e+00) (7.916667e-01, 3.461317e+00) (8.083333e-01, 3.678682e+00) (8.250000e-01, 3.860046e+00) (8.416667e-01, 3.886429e+00) (8.583333e-01, 3.807770e+00) (8.750000e-01, 3.600703e+00) (8.916667e-01, 3.281964e+00) (9.083333e-01, 2.859869e+00) (9.250000e-01, 2.355338e+00) (9.416667e-01, 1.778560e+00) (9.583333e-01, 1.190888e+00) (9.750000e-01, 6.277224e-01) (9.916667e-01, 1.643278e-01)
            };

        % --- Position de la légende (facile à modifier) ---
        \def\legendx{0.1}
        \def\legendy{0.8}
        
        \node[
            anchor=north west,
            fill=white,
            draw=black,
            rounded corners=2pt,
            inner sep=3pt
        ] at (rel axis cs:\legendx,\legendy) {
        \begin{tabular}{@{}ll@{}}
        \textcolor{green}{\rule{0.4cm}{1.5pt}} & sphere \\
        \textcolor{blue}{\rule{0.4cm}{1.5pt}} & collector \\
        \textcolor{orange}{\rule{0.4cm}{1.5pt}} & connecting rod
        \end{tabular}
        };

        \end{axis}
    \end{tikzpicture}
    
    \caption{Distribution of aspect ratio in three tetrahedral meshes of smooth/simple geometries. Solid lines are for optimized meshes. A threshold aspect ratio is fixed at $\gamma=0.4$. Yet, gmsh's optimizer is only able to produce meshes with $\gamma > 0.2$.}
    \label{fig:aspect_ratio_indu}
    %nico: why 0.4 then 0.2 ?
\end{figure}

%% file: degenerated.tex
\section{Stiffness matrices of degenerate tetrahedra}
\label{sec:deg}
Edelsbrunner \cite{Edelsbrunner2001} proposed a geometric classification of degenerate tetrahedra based on how they collapse within the six-dimensional space of edge lengths. Degeneracies occur when the volume of the tetrahedron approaches zero, producing two fundamental families: flat and skinny tetrahedra. Flat tetrahedra are nearly coplanar, while skinny tetrahedra are close to a line. This classification provides a conceptual framework for analyzing and avoiding pathological elements in mesh generation and finite-element computations.
\subsection{Flat tetrahedra}
\begin{table}[]
    \centering
    \begin{tabular}{|c|c|c|c|}
    \hline
      \input{sliver}    &  \input{cap} &  \input{spade}&  \input{wedge}\\
    \hline    
$\bm{x}_1=(-h,-h,0)$&
$\bm{x}_1=(-h,0,0)$&
$\bm{x}_1=(0,0,0)$&
$\bm{x}_1=(0,0,0)$\\
$\bm{x}_2=(h,-h,\varepsilon)$&
$\bm{x}_2=(h,0,0)$&
$\bm{x}_2=(0,h,\varepsilon)$&
$\bm{x}_2=(0,h,0)$\\
$\bm{x}_3=(h,h,0)$&
$\bm{x}_3=(0,\sqrt{3}h,0)$&
$\bm{x}_3=(0,2h,0)$&
$\bm{x}_3=(h,0,0)$\\
$\bm{x}_4=(-h,h,\varepsilon)$&
$\bm{x}_4=(0,h \sqrt{3}/3, \varepsilon)$&
$\bm{x}_4=(h,h,0)$&
$\bm{x}_4=(h,0,\varepsilon)$\\
    \hline    
   $\bm{v}_1 = \frac{1}{2} \begin{pmatrix} 1 \\ -1 \\ 1 \\ -1 \end{pmatrix}$
 & 
    $\bm{v}_1 = \frac{1}{2\sqrt{3}} \begin{pmatrix} -1 \\ -1 \\ -1 \\ 3 \end{pmatrix}$
& 
   $\bm{v}_1 = \frac{1}{\sqrt{6}} \begin{pmatrix} 1 \\ -2 \\ 1 \\ 0 \end{pmatrix}$
& 
   $\bm{v}_1 = \frac{1}{\sqrt{2}} \begin{pmatrix} 0 \\ 0 \\ 1 \\ -1 \end{pmatrix}$
\\  
    \hline
    \end{tabular}
    \caption{Flat tetrahedra.}
    \label{tab:flat}
\end{table}
%nico - done: utilisation de la notation v pour deux choses différentes. 
%nico: mettre le numéro des noeuds sur les tets de la figure aiderait.
%nico: pour le sliver peux-tu mettre les 3 premiers noeuds dans le plan x,y ? 
%nico - done: même remarque pour le spade - AQ: c'est déjà le cas 
First, consider a sliver tetrahedron with vertices
\[
\bm{x}_1=(-h,-h,0), \quad
\bm{x}_2=(-h,h,\varepsilon), \quad
\bm{x}_3=(h,h,0), \quad
\bm{x}_4=(h,-h,\varepsilon).
\]
We aim to compute analytically the finite element stiffness matrix
associated with the Laplace operator, using linear (P1) shape functions. Define
\[
\bm{e}_1=\bm{x}_2-\bm{x}_1,\qquad
\bm{e}_2=\bm{x}_3-\bm{x}_1,\qquad
\bm{e}_3=\bm{x}_4-\bm{x}_1,
\]
we can compute the element volume as
\[
6V = \det[\bm{e}_1\;\bm{e}_2\;\bm{e}_3]
     = 8\,\varepsilon\,h^2,
\qquad\Rightarrow\qquad
V = \frac{4}{3}\,\varepsilon\,h^2.
\]
Consider the four linear
shape functions $N_j$, $j=1,\dots,4$, of the tetrahedron, whose gradients are constant
within the element:
\[
\begin{aligned}
\nabla N_1 &= \left(-\tfrac{1}{4h},\;-\tfrac{1}{4h},\;-\tfrac{1}{2\varepsilon}\right)~~~,~~~
\nabla N_2 &= \left(-\tfrac{1}{4h},\;\;\tfrac{1}{4h},\;\;\tfrac{1}{2\varepsilon}\right),\\
\nabla N_3 &= \left(\;\tfrac{1}{4h},\;\;\tfrac{1}{4h},\;-\tfrac{1}{2\varepsilon}\right)~~~,~~~
\nabla N_4 &= \left(\;\tfrac{1}{4h},\;-\tfrac{1}{4h},\;\;\tfrac{1}{2\varepsilon}\right).
\end{aligned}
\]
The stiffness matrix of the Laplacian can be written as
\[
K_{ij} = \int_T \nabla N_i \cdot \nabla N_j \,\mathrm{d}V
        = V\,(\nabla N_i \cdot \nabla N_j).
\]
Using $V=\tfrac{4}{3}\varepsilon h^2$, we obtain
\[
K=
\begin{pmatrix}
\frac{\varepsilon}{6}+\frac{h^2}{3\varepsilon} & -\frac{h^2}{3\varepsilon} & -\frac{\varepsilon}{6}+\frac{h^2}{3\varepsilon} & -\frac{h^2}{3\varepsilon} \\[0.3em]
-\frac{h^2}{3\varepsilon} & \frac{\varepsilon}{6}+\frac{h^2}{3\varepsilon} & -\frac{h^2}{3\varepsilon} & -\frac{\varepsilon}{6}+\frac{h^2}{3\varepsilon} \\[0.3em]
-\frac{\varepsilon}{6}+\frac{h^2}{3\varepsilon} & -\frac{h^2}{3\varepsilon} & \frac{\varepsilon}{6}+\frac{h^2}{3\varepsilon} & -\frac{h^2}{3\varepsilon} \\[0.3em]
-\frac{h^2}{3\varepsilon} & -\frac{\varepsilon}{6}+\frac{h^2}{3\varepsilon} & -\frac{h^2}{3\varepsilon} & \frac{\varepsilon}{6}+\frac{h^2}{3\varepsilon}
\end{pmatrix}.
\]
It is convenient to separate the contributions associated with
the out-of-plane ($z$) direction and the in-plane $(x,y)$ derivatives:
%nico -done: est-ce que ce n'est pas d'abord out-plane et puis in-plane ? - AQ: oui je corrige
\[
K = 
\frac{h^2}{3\varepsilon}
\underbrace{\begin{pmatrix}
1 & -1 & 1 & -1\\
-1& 1 & -1& 1 \\
1 & -1& 1 & -1\\
-1& 1 & -1& 1
\end{pmatrix}}_{K^{(1/\varepsilon)}}
+\frac{\varepsilon}{6}
\underbrace{
\begin{pmatrix}
1 & 0 & -1 & 0\\
0 & 1 & 0 & -1\\
-1& 0 & 1 & 0\\
0 & -1& 0 & 1
\end{pmatrix}}_{K^{(\varepsilon)}}.
\]
In the limit $\varepsilon \rightarrow 0$, the volume of the element tends to $V \rightarrow 0$ and the stiffness matrix is not computable.
Matrix \({K^{(1/\varepsilon)}}\) is a rank-1 matrix with a single non-zero eigenvalue.

\[
\lambda_1 = 4, \qquad
\bm{v}_1 = \frac{1}{2}
\begin{pmatrix}
1 \\ -1 \\ 1 \\ -1
\end{pmatrix}.
\]

All other eigenvalues are zero.  
An orthonormal basis of the nullspace (orthogonal to \(\bm{v}_1\)) is
\[
\bm{v}_2 = \tfrac{1}{2}
\begin{pmatrix}
1 \\ 1 \\ -1 \\ -1
\end{pmatrix},
\qquad
\bm{v}_3 = \tfrac{1}{2}
\begin{pmatrix}
1 \\ -1 \\ -1 \\ 1
\end{pmatrix},
\qquad
\bm{v}_4 = \tfrac{1}{2}
\begin{pmatrix}
1 \\ 1 \\ 1 \\ 1
\end{pmatrix}.
\]

With the orthogonal matrix \(Q = [\bm{v}_1\;\bm{v}_2\;\bm{v}_3\;\bm{v}_4]\),
the spectral decomposition is therefore
\[
{K^{(1/\varepsilon)}} = Q\,
\operatorname{diag}(4,0,0,0)\,
Q^\top
  = 4\,\bm{v}_1\bm{v}_1^\top.
\]

As with slivers, all flat tetrahedra have a stiffness matrix that can be expressed as
\[
K_e \;=\; 
C_1\,\frac{h^2}{\varepsilon}\,K^{(1/\varepsilon)} 
\;+\;
C_2\,\varepsilon\,K^{(\varepsilon)},
\]
where the first term dominates as the element becomes thinner. 
The \( \tfrac{h^2}{\varepsilon} \) contribution reflects the artificially large stiffness 
associated with gradients across the small thickness~\(\varepsilon\), 
while the \( \varepsilon\,K^{(\varepsilon)} \) part corresponds to 
the in-plane modes that vanish as the tetrahedron flattens. In all cases, \(K^{(1/\varepsilon)}\) is of rank one and can be written as
\[
K^{(1/\varepsilon)} = \boldsymbol{v}_1 \boldsymbol{v}_1^{\!\top},
\]
see Table~\ref{tab:flat}.

We see here that flat tetrahedra introduce a single mode with infinite energy,
which cannot appear in the finite element solution. We have shown in \cite{quiriny2024tempered} that for isolated degenerate elements, it is sufficient to choose $0 < \varepsilon \ll h$ (e.g. $\varepsilon = 10^{-12} h$) to recover a convergent finite element solution.
By doing so, the flat element acts as a local constraint on the solution, preventing the activation of the infinite-energy mode.

As an example, consider the sliver. If \(u_j,\; j = 1, \dots, 4\) are the four values of a finite element field
at its vertices, then the finite element solution will be such that 
\begin{equation}
\label{eq:locksliver}
u_1 + u_3 = u_2 + u_4.    
\end{equation}
If the sliver is not perfectly regular, a similar conclusion holds:
the solution must take the same value where the two edges intersect within the sliver.

\input{lock}

Now, let us consider the band of slivers in Figure \ref{fig:sliverband}. Assume that $u_i$ is the value of a finite element solution at point $i$. Fix the values $u_1$, $u_3$, $u_5$, $u_7$, and $u_9$ at the lower nodes. We still have one degree of freedom; thus, let us fix $u_2$ as well. Then all other nodes are \emph{locked}:
$$u_4 = u_3+(u_2-u_1)~~,~~u_6 = (u_4+u_5)-u_3=u_5+(u_2-u_1)~~,$$
$$u_8 = u_7+(u_2-u_1)~~,~~u_{10} = u_9+(u_2-u_1).$$
The solution at the upper nodes is the same as at the lower nodes, shifted by a constant equal to $u_2-u_1$. This is exactly what is referred to as a \emph{locking phenomenon}. In the case of a sheet of slivers (a stack of band of slivers), the finite element solution cannot converge.
%nico: peut-on dire que le seul champ que l'on représenté est linéaire ? comme en 2D? - AQ: pas exactement, dans le cas présenté ici on pourrait très bien avoir une courbure de la solution le long de la bande mais elle doit être la même sur les noeuds d'en haut et d'en bas. Si on "stack" ces bandes de slivers pour former un plan on voit qu'on oblige d'avoir la même courbure dans une direction sur tout le plan. 

In general, the larger the number of nonzero entries in the vector \(\boldsymbol{v}_1\), 
the more the high-energy mode couples the nodes of the tetrahedron, 
and the more likely this degenerate element is to cause problems when forming bands. 
In this respect, the wedge is perfectly harmless, 
and choosing \(0 < \varepsilon \ll h\) will reliably recover finite element convergence. 

In contrast, the three other flat tetrahedra may lead to numerical issues 
and require a more subtle treatment: we have seen that slivers can form surface-like bands in which the solution locks over the entire surface. 
A similar behavior can be expected from combinations of caps and slivers. 
Spades, on the other hand, may form one-dimensional bands, 
analogous to the cap-type locking bands that occur in two-dimensional triangular meshes. In any case, it is dangerous in three dimensions to choose an arbitrarily small \(\varepsilon\),
as the number of configurations that may lead to locking is much larger than in two dimensions. Fortunately, we have recently introduced a robust mathematical framework, TFEM \cite{quiriny2024tempered},
for selecting \(\varepsilon\) as a function of \(h\), 
which guarantees the recovery of the optimal convergence rate of finite elements.

\subsection{Skinny tetrahedra}
\begin{table}[]
    \centering
    \begin{tabular}{|c|c|c|c|c|}
    \hline
      \input{spire}    &  \input{spear} &  \input{spindle}&  \input{spike} &  \input{splinter}\\
    \hline
    \end{tabular}
    \caption{Skinny tetrahedra.}
    \label{tab:skinny}
\end{table}

Unlike flat tetrahedra, \emph{skinny} tetrahedra possess \textbf{two}
independent directions that tend to zero. Consider a spear tetrahedron with vertices
\[
\bm{x}_1=(0,0,0), \quad
\bm{x}_2=(0,2h,0), \quad
\bm{x}_3=(-\varepsilon_1,h,\varepsilon_2), \quad
\bm{x}_4=(\varepsilon_1,h,\varepsilon_2).
\]
For the spear tetrahedron, the stiffness matrix can be decomposed as
\[
K
= K^{(1/\varepsilon_1)} + K^{(1/\varepsilon_2)} + K^{(\mathrm{\varepsilon_1\varepsilon_2})},
\]
%nico: j'écrirais plutot 
%K(eps1/eps2) que K(1/eps2) - AQ: Je crois que le but ici est d'isoler contribution singulière (1/eps_1 et 1/eps_2) de la partie régulière eps_1*eps_2.
where each singular contribution is of rank one and can be written as
\[
K^{(1/\varepsilon_1)}
= \frac{h\,\varepsilon_2}{6\,\varepsilon_1}\,
\boldsymbol{v}_1\,\boldsymbol{v}_1^{\!\top},
\qquad
K^{(1/\varepsilon_2)}
= \frac{h\,\varepsilon_1}{6\,\varepsilon_2}\,
\boldsymbol{v}_2\,\boldsymbol{v}_2^{\!\top},
\]
with
\[
\boldsymbol{v}_1 =
\begin{bmatrix} 0 \\[2pt] 0 \\[2pt] -1 \\[2pt] 1 \end{bmatrix},
\qquad
\boldsymbol{v}_2 =
\begin{bmatrix} 1 \\[2pt] 1 \\[2pt] -1 \\[2pt] -1 \end{bmatrix},
\]
and a bounded remainder
\[
K^{(\mathrm{\varepsilon_1\varepsilon_2})}
= \frac{\varepsilon_1\,\varepsilon_2}{6\,h}\,
\begin{bmatrix}
1 & -1 & 0 & 0\\[3pt]
-1 & 1 & 0 & 0\\[3pt]
0 & 0 & 0 & 0\\[3pt]
0 & 0 & 0 & 0
\end{bmatrix}.
\]
In fact, if only \(\varepsilon_1 \to 0\), the configuration reduces to the case of a \emph{wedge};
conversely, if only \(\varepsilon_2 \to 0\), it corresponds to a \emph{sliver}.
When \(\varepsilon_2 / \varepsilon_1 \approx 1\),
that is, when the element truly tends to a \emph{skinny} configuration,
no infinite-energy mode exists.
This is immediately clear if one recalls that in the skinny case,
the element volume scales as \(V \sim \varepsilon_1 \varepsilon_2\)
while the gradient products scale as \(1/(\varepsilon_1 \varepsilon_2)\),
thus making the stiffness matrix bounded. The bounded character of the stiffness matrices holds for all
\emph{skinny} tetrahedra — these elements do not require any special
treatment for convergence.

%% file: sliver.tex
\begin{tikzpicture}[scale=.6]
  \tikzstyle{v}=[circle,fill=black,inner sep=1.9pt]
  % Sommets (rectangle)
  \coordinate (BL) at (0,0);      % bottom-left
  \coordinate (TL) at (0,2.6);    % top-left
  \coordinate (BR) at (3.6,0);    % bottom-right
  \coordinate (TR) at (3.6,2.6);  % top-right
  % Bord du rectangle
  \draw[thick] (BL)--(TL)--(TR)--(BR)--cycle;
  % Diagonales (une visible, une “cachée”)
  \draw[thick]       (TL)--(BR);       % visible
  \draw[thick,dashed](BL)--(TR);       % cachée
  % Points
  \node[v] at (BL) {}; \node[v] at (TL) {}; \node[v] at (BR) {}; \node[v] at (TR) {};
  % Légende
  \node at (1.8,-0.6) {Sliver};
\end{tikzpicture}

%% file: cap.tex
\begin{tikzpicture}[scale=.6]
  \tikzstyle{v}=[circle,fill=black,inner sep=1.9pt]
  % Triangle équilatéral de côté 3
  \coordinate (A) at (0,0);
  \coordinate (B) at (3,0);
  \coordinate (C) at (1.5,{3*sqrt(3)/2}); % ~2.598
  % Centre (centroïde)
  \coordinate (O) at (1.5,{sqrt(3)/2});   % ~0.866

  % Arêtes du triangle
  \draw[thick] (A)--(B)--(C)--cycle;
  % Connexions du point central
  \draw[thick] (O)--(A);
  \draw[thick] (O)--(B);
  \draw[thick] (O)--(C);

  % Sommets
  \node[v] at (A) {}; \node[v] at (B) {}; \node[v] at (C) {};
  \node[v] at (O) {};
  % Légende (optionnelle)
   \node at (1.5,-0.6) {Cap};
\end{tikzpicture}

%% file: spade.tex
\begin{tikzpicture}[scale=.6]
  \tikzstyle{v}=[circle,fill=black,inner sep=1.9pt]

  % Trois sommets alignés verticalement à gauche
  \coordinate (T) at (0,2.4);   % top-left
  \coordinate (M) at (-.4,1.2);   % mid-left
  \coordinate (B) at (0,0.0);   % bottom-left
  % Sommet de droite, aligné en y avec M
  \coordinate (R) at (4.0,1.2);

  % --- Arêtes (ordre pour un rendu fidèle) ---
  % Grande arête gauche en tirets (T--B), visible entre les segments solides
  \draw[thick,dashed] (T)--(B);

  % Petits segments solides sur la gauche (recouvrent partiellement les tirets)
  \draw[thick] (T)--(M);
  \draw[thick] (M)--(B);

  % Connexions au sommet de droite
  \draw[thick] (T)--(R);
  \draw[thick] (B)--(R);
  \draw[thick] (M)--(R); % barre horizontale

  % Sommets
  \node[v] at (T) {}; \node[v] at (M) {}; \node[v] at (B) {}; \node[v] at (R) {};

  % Légende (optionnelle)
   \node at (2,-0.6) {Spade};
\end{tikzpicture}

%% file: wedge.tex
\begin{tikzpicture}[scale=.6]
  \tikzstyle{v}=[circle,fill=black,inner sep=1.9pt]
  % Sommets
  \coordinate (A) at (0,0);
  \coordinate (B) at (0,2.8);
  \coordinate (C) at (4.0,2.0);
  \coordinate (D) at (4.0,0.9);
  % Arêtes visibles
  \draw[thick] (A)--(B);
  \draw[thick] (C)--(D);
  \draw[thick] (A)--(C);
  \draw[thick] (A)--(D);
  \draw[thick] (B)--(C);
  % Arête “cachée”
  \draw[thick,dashed] (B)--(D);
  % Points
  \node[v] at (A) {}; \node[v] at (B) {}; \node[v] at (C) {}; \node[v] at (D) {};
  % Légende
  \node at (2,-0.6) {Wedge};
\end{tikzpicture}

%% file: lock.tex
\begin{figure}
    \centering
\begin{tikzpicture}[scale=2.2, line join=round, line cap=round]

% --- Points (10 uniques) ---
\coordinate (P1)  at (0,0);
\coordinate (P2)  at (0,1);
\coordinate (P3)  at (1,0);
\coordinate (P4)  at (1,1);
\coordinate (P5)  at (2,0);
\coordinate (P6)  at (2,1);
\coordinate (P7)  at (3,0);
\coordinate (P8)  at (3,1);
\coordinate (P9)  at (4,0);
\coordinate (P10) at (4,1);

% --- Carrés + diagonales ---
\foreach \a/\b/\c/\d in {P1/P2/P4/P3, P3/P4/P6/P5, P5/P6/P8/P7, P7/P8/P10/P9}{
  \draw[thick] (\a)--(\b)--(\c)--(\d)--cycle;
  \draw[thin]  (\b)--(\d);
  \draw[dashed]  (\a)--(\c) ;
}

% --- Points + numéros ---
\foreach \i/\dx/\dy in {
  1/-0.15/-0.15,
  2/-0.15/ 0.15,
  3/ 0.15/-0.15,
  4/ 0.15/ 0.15,
  5/ 0.15/-0.15,
  6/ 0.15/ 0.15,
  7/ 0.15/-0.15,
  8/ 0.15/ 0.15,
  9/ 0.15/-0.15,
  10/0.15/ 0.15}{
  \fill (P\i) circle (1.3pt);
  \node[font=\small] at ($(P\i)+(\dx,\dy)$) {\i};
}

\end{tikzpicture}
    \caption{A band of slivers.}
    \label{fig:sliverband}
\end{figure}

%% file: spire.tex
\begin{tikzpicture}[scale=0.6]
  \tikzstyle{v}=[circle,fill=black,inner sep=1.9pt]
  % Sommets (petite base triangulaire + sommet très haut)
  \coordinate (BL) at (0,0);
  \coordinate (BM) at (1.2,-0.35);
  \coordinate (BR) at (2.4,0.0);
  \coordinate (T)  at (1.2,5.0);

  % Arêtes
  \draw[thick] (T)--(BL);
  \draw[thick] (T)--(BM);
  \draw[thick] (T)--(BR);
  \draw[thick] (BL)--(BM);
  \draw[thick,dashed] (BL)--(BR);
  \draw[thick] (BM)--(BR);

  % Sommets
  \node[v] at (BL) {}; \node[v] at (BM) {}; \node[v] at (BR) {}; \node[v] at (T) {};
  \node at (1.2,-1.2) {Spire};
\end{tikzpicture}

%% file: spear.tex
\begin{tikzpicture}[scale=0.6]
  \tikzstyle{v}=[circle,fill=black,inner sep=1.9pt]
  % Deux pointes (haut/bas) + deux points latéraux alignés horizontalement
  \coordinate (B)  at (1.6,0.0);
  \coordinate (T)  at (1.6,5.0);
  \coordinate (L)  at (0.6,2.35);
  \coordinate (R)  at (2.6,2.35);

  % Arêtes principales
  \draw[thick] (T)--(L);
  \draw[thick] (T)--(R);
  \draw[thick] (B)--(L);
  \draw[thick] (B)--(R);
  \draw[thick] (T)--(B);      % mât central
  % Arête “cachée” (liaison horizontale)
  \draw[thick,dashed] (L)--(R);

  % Sommets
  \node[v] at (B) {}; \node[v] at (T) {}; \node[v] at (L) {}; \node[v] at (R) {};
  \node at (1.6,-1.) {Spear};
\end{tikzpicture}

%% file: spindle.tex
\begin{tikzpicture}[scale=0.6]
  \tikzstyle{v}=[circle,fill=black,inner sep=1.9pt]
  % Pointes haut/bas + deux points latéraux (légèrement décalés)
  \coordinate (B)  at (1.4,0.0);
  \coordinate (T)  at (1.4,5.0);
  \coordinate (L)  at (0.80,3.333);
  \coordinate (R)  at (2.,1.6666);

  % Contour
  \draw[thick] (T)--(L)--(B);
  \draw[thick] (T)--(R)--(B);
  % Arête cachée (mât)
  \draw[thick,dashed] (T)--(B);
  % Arêtes internes visibles
  \draw[thick] (L)--(R);

  % Sommets
  \node[v] at (B) {}; \node[v] at (T) {}; \node[v] at (L) {}; \node[v] at (R) {};
   \node at (1.6,-1.) {Spindle};
\end{tikzpicture}

%% file: spike.tex
\begin{tikzpicture}[scale=0.5]
  \tikzstyle{v}=[circle,fill=black,inner sep=1.9pt]

  % Géométrie : deux points en bas (base légèrement oblique),
  % un point au-dessus d’eux (verticalement allongé),
  % et un quatrième décalé latéralement.
  \coordinate (B1) at (0,0);
  \coordinate (B2) at (1.2,0);
  \coordinate (T)  at (0.6,5.0);
  \coordinate (S)  at (1.4,2.4); % sommet latéral décalé (le “spike”)

  % Arêtes principales
  \draw[thick] (B1)--(B2);
  \draw[thick] (B1)--(T);
  \draw[thick] (B2)--(T);
  \draw[dashed] (B1)--(S);
  \draw[thick] (B2)--(S);
  \draw[thick] (T)--(S);

  % Arêtes cachées (une diagonale dans la base latérale)
  \draw[thick,dashed] (B1)--(T);

  % Sommets
  \node[v] at (B1) {};
  \node[v] at (B2) {};
  \node[v] at (T) {};
  \node[v] at (S) {};

   \node at (1.0,-1) {Spike};
\end{tikzpicture}

%% file: splinter.tex
\begin{tikzpicture}[scale=0.5]
  \tikzstyle{v}=[circle,fill=black,inner sep=1.9pt]
  % Très fin et quasi-rectiligne
  \coordinate (BL) at (0.20,0.3);
  \coordinate (BR) at (2.0,0.0);
  \coordinate (TL) at (0.35,5.3);
  \coordinate (TR) at (2.15,5.0);

  % Bord long et mince
  \draw[thick] (TL)--(BL);
  \draw[thick] (TR)--(BR);
  \draw[thick] (TL)--(TR);
  \draw[thick] (BL)--(BR);

  % Diagonale “cachée” longue
  \draw[thick,dashed] (TL)--(BR);
  \draw[thick] (BL)--(TR);

  % Petite barre supérieure légèrement oblique
  \draw[thick] (TL)--($(TL)!0.25!(TR)$);

  % Sommets
  \node[v] at (TL) {}; \node[v] at (TR) {}; \node[v] at (BL) {}; \node[v] at (BR) {};
  \node at (1.3,-1) {Splinter};
\end{tikzpicture}

%% file: Isolated_slivers.tex
\section{Isolated slivers do not matter}
In this section, we show that slivers and other `bad' elements are not actually problematic, as long as they are sufficiently isolated. We present a simplified version of the construction from \cite{kuvcera2016necessary}, which makes it possible to greatly weaken the `isolatedness' assumption.

We consider Poisson's problem on a polyhedral domain $\Omega \subset \mathbb{R}^3$, with homogeneous Dirichlet boundary conditions for simplicity:
\begin{equation}
- \Delta u = f \quad \text{in } \Omega,
\qquad u = 0 \quad \text{on } \partial \Omega,
\end{equation}
where $f : \Omega \to \mathbb{R}$ is given. The weak form is defined on the Sobolev space $H^1_0(\Omega)$ equipped with the standard seminorm. Let $V_h$ be the space of piecewise linear continuous functions on a face-to-face partition $\mathcal{T}_h$ of $\Omega$ into tetrahedra. The finite element solution then reads: find $u_h\in V_h$ such that
\begin{equation}
\int_\Omega \nabla u_h\cdot\nabla v_h\,\mathrm{d}x= \int_\Omega f v_h\,\mathrm{d}x,\quad\forall v_h\in V_h.
\end{equation}
We denote $h_K = \mathrm{diam}\,K$ for $K\in\mathcal{T}_h$, and the mesh parameter $h=\max_K h_K$. In the following, $C$ will denote a generic constant independent of $u$ and $h$.

The basic tool to estimate the error in the $H^1$-seminorm is Cea's lemma:
\begin{equation}
|u-u_h|_{H^1(\Omega)}\leq C\inf_{v_h\in V_h}|u-v_h|_{H^1(\Omega)}.
\end{equation}

The basic approach is now to estimate the right-hand side by taking a specific $v_h$ for which $u-v_h$ is easy to estimate. The natural choice is to take the piecewise linear \emph{Lagrange interpolant} $v_h:=\Pi_h u$, defined using the local Lagrange interpolation $\Pi_K u\in P^1(K)$ on element $K\in\mathcal{T}_h$:
\begin{equation}
\Pi_K u(z)=u(z),\quad\forall z\in\{A_K,B_K,C_K,D_K\},
\end{equation}
where $A_K,B_K,C_K,D_K$ are the four vertices of $K$ taken in any order. The problem with Lagrange interpolation is that the approximation of gradients deteriorates with the poor geometry of $K$. To this end, we construct a modified interpolation $\widetilde{\Pi}_K u\in P^1(K)$:
\begin{equation}
\widetilde{\Pi}_K u(A_K)=u(A_K),\quad \nabla\widetilde{\Pi}_K u=\frac{1}{|K|}\int_K\nabla u\,\mathrm{d}x.
\end{equation}
The point of the construction is to interpolate at one arbitrarily chosen vertex $A_K$ and prescribe the gradient of $\widetilde{\Pi}_K u$ to be the averaged gradient of $u$. Thus, the gradient approximation is $\mathcal{O}(h)$ independently of the geometry of $K$. The trade-off in this construction is that we no longer interpolate at all vertices, but only at the chosen vertex $A_K$. We have thus introduced a \emph{vertex defect} at the \emph{defect vertices} $B_K,C_K,D_K$, which is nevertheless small, namely $\mathcal{O}(h^2)$:

\begin{lemma}[Averaged-gradient projection on tetrahedra]
Let $K\in\mathcal{T}_h$ and $h_K:=\operatorname{diam}(K)$.
Then there exists a constant $C$ independent of $K,u$ such that:
\begin{enumerate}
\item If \(u\in H^2(K)\), then
\[
|u-\widetilde{\Pi}_K u|_{H^1(K)}
\le C h_K |u|_{H^2(K)} .
\]

\item If \(u\in W^{2,\infty}(\Omega)\), denote $M := |u|_{W^{2,\infty}(\Omega)}$. Then for every vertex
\(z\in\{A_K,B_K,C_K,D_K\}\),
\begin{equation}
\label{eq:defect_est}
|u(z)-\widetilde{\Pi}_K u(z)|
\le C h_K^2 M.
\end{equation}
\end{enumerate}
\end{lemma}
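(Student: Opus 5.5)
Both parts of the statement reduce to a one-dimensional argument along edges or to a Poincaré inequality on a convex set. We never use the shape regularity of $K$.

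\emph{Part 1.} Since $\widetilde{\Pi}_K u\in P^1(K)$, its gradient is the constant vector $\frac{1}{|K|}\int_K\nabla u\,\mathrm{d}x$, the mean of $\nabla u$ over $K$. Hence
\[
|u-\widetilde{\Pi}_K u|_{H^1(K)}=\Bigl\|\nabla u-\frac{1}{|K|}\int_K\nabla u\,\mathrm{d}x\Bigr\|_{L^2(K)} .
\]
We apply the Poincaré inequality for mean-zero functions, componentwise to $\partial_i u\in H^1(K)$. Because $K$ is convex, the Payne--Weinberger (Bebendorf) estimate gives the constant $h_K/\pi$. This constant depends only on the diameter and not on the shape of $K$, which is exactly the geometry-independence we need. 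Summing over the three components gives $|u-\widetilde{\Pi}_K u|_{H^1(K)}\le \frac{h_K}{\pi}|u|_{H^2(K)}$.

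\emph{Part 2.} At $z=A_K$ the defect is zero by construction. For $z\in\{B_K,C_K,D_K\}$, we have $\widetilde{\Pi}_K u(z)=u(A_K)+\bar g\cdot(z-A_K)$ with $\bar g$ the mean gradient. Since $u\in W^{2,\infty}(\Omega)\subset C^1(\overline{\Omega})$, we can write $u(z)-u(A_K)=\int_0^1\nabla u(A_K+t(z-A_K))\cdot(z-A_K)\,\mathrm{d}t$. Hence
\[
|u(z)-\widetilde{\Pi}_K u(z)|\le |z-A_K|\,\sup_{x,y\in K}|\nabla u(x)-\bar g| .
\]
Because $\bar g$ is an average of values of $\nabla u$ over $K$, we get $|\nabla u(x)-\bar g|\le \sup_{y\in K}|\nabla u(x)-\nabla u(y)|$. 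By the mean value theorem along the segment $[x,y]\subset K$ (convexity again), this is at most $M h_K$. Together with $|z-A_K|\le h_K$, this yields the bound $C h_K^2 M$ with $C$ a pure constant, e.g.\ $1$ or $\sqrt3$ depending on the norm convention for $|\cdot|_{W^{2,\infty}}$.

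\emph{Main obstacle.} The only delicate point is ensuring that no constant depends on the aspect ratio of $K$. The standard Bramble--Hilbert or scaling-to-reference-element route would introduce $\|B_K^{-1}\|$ and blow up for slivers, so it must be avoided. Both parts avoid it:
\begin{itemize}
\item Part 1 uses the convex-domain Poincaré constant, which depends only on $h_K$.
\item Part 2 uses Lipschitz continuity of $\nabla u$ on a convex set.
\end{itemize}
A minor technical remark covers Part 1 when $u$ is only $H^2$: we would argue by density of smooth functions in $H^2(K)$, where the Payne--Weinberger inequality applies to the $H^1$ functions $\partial_i u$ directly.
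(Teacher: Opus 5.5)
Your proof is correct and follows the paper's argument. Part 1 is the same componentwise Poincar\'e--Wirtinger inequality on the convex set $K$, with the constant made explicit as $h_K/\pi$ via Payne--Weinberger. In Part 2 the paper adds and subtracts $\nabla u(A_K)\cdot(z-A_K)$ to split the defect into a Taylor remainder $T_1$ and $(\nabla u(A_K)-\overline g_K)\cdot(z-A_K)$, whereas you write it directly as $\int_0^1\bigl(\nabla u(A_K+t(z-A_K))-\overline g_K\bigr)\cdot(z-A_K)\,\mathrm{d}t$, a cosmetic merging of the same two estimates that gives a slightly better constant.
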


\begin{proof}
Set
\[
\overline g_K:=\frac{1}{|K|}\int_K \nabla u\,dx .
\]
Then
\[
\nabla \widetilde{\Pi}_K u=\overline g_K.
\]

First assume \(u\in H^2(K)\). Since \(K\) is convex, the Poincaré--Wirtinger
inequality on convex sets gives, componentwise applied to \(\nabla u\),
\[
\|\nabla u-\overline g_K\|_{L^2(K)}
\le C h_K \|D^2u\|_{L^2(K)} .
\]
Therefore
\[
|u-\widetilde{\Pi}_K u|_{H^1(K)}
= \|\nabla u-\overline g_K\|_{L^2(K)}
\le C h_K |u|_{H^2(K)} .
\]

Now assume \(u\in W^{2,\infty}(K)\). The estimate is trivial for
\(z=A_K\), since \(\widetilde{\Pi}_K u(A_K)=u(A_K)\). Let
\(z\in\{B_K,C_K,D_K\}\). Then
\[
u(z)-\widetilde{\Pi}_K u(z) = u(z)-u(A_K)-\overline g_K\cdot (z-A_K).
\]
Add and subtract \(\nabla u(A_K)\cdot (z-A_K)\):
\[
u(z)-\widetilde{\Pi}_K u(z)
=\underbrace{u(z)-u(A_K)-\nabla u(A_K)\cdot (z-A_K)}_{T_1} +\underbrace{\bigl(\nabla u(A_K)-\overline g_K\bigr)\cdot (z-A_K)}_{T_2}.
\]

\noindent Taylor's formula yields
\[
|T_1| \le C |z-A_K|^2 M \le C h_K^2 M.
\]
Furthermore,
\[
|\nabla u(A_K)-\overline g_K|
= \left|\frac{1}{|K|}\int_K \bigl(\nabla u(A_K)-\nabla u(x)\bigr)\,dx
\right| \le C h_K M,
\]
since \(|x-A_K|\le h_K\) for all \(x\in K\). Hence
\[
|T_2| \le |\nabla u(A_K)-\overline g_K|\,|z-A_K| \le C h_K^2 M.
\]
Combining the estimates for \(T_1\) and \(T_2\) yields
\[
|u(z)-\widetilde{\Pi}_K u(z)|
\le C h_K^2 M.
\]
\end{proof}

\subsection{Defect correction on meshes with `bad' elements}
We split the mesh into
\[
\mathcal{T}_h = \mathcal{T}_h^1 \cup \mathcal{T}_h^2,
\]
where $\mathcal{T}_h^1$ consists of shape-regular (`good') elements and $\mathcal{T}_h^2$ contains possibly degenerate (`bad') tetrahedra, e.g. slivers. 

On $\mathcal{T}_h^2$ we will use the averaged-gradient projection introduced above, which is oblivious to the bad geometry of these elements. However, due to the vertex defect, one cannot straightforwardly `glue' these linear functions $\widetilde{\Pi}_Ku$ together continuously, since we no longer nodally interpolate $u$. The trick is to construct a modified function $\tilde{u}$ that $\widetilde{\Pi}_Ku$ nodally interpolates instead of $u$.

Let $\mathcal Z_h$ be the set of all defect vertices of all $K\in\mathcal T_h^2$. For each $z\in\mathcal Z_h$ let
$K_z\in\mathcal T_h^2$ denote the unique bad element associated with $z$, and set $h_z:=h_{K_z}$. For each $z \in \mathcal{Z}_h$ define its defect
\[
\delta_z := \widetilde{\Pi}_{K_z} u(z) - u(z).
\]
For each vertex $z\in\mathcal{Z}_h$ define a \emph{defect correction function} $\varphi_z\in C^\infty(\Omega)$ such that:
\begin{enumerate}
\item  $\varphi_z$ is a scaling and translation of a single radially symmetric smooth `bump' function $\varphi$,
\item $\mathrm{supp}\,\varphi_z(z) = \overline{B(z,ch_z)}$ for some $c$ independent of $z,h$ (i.e. a ball centered at $z$ with radius $ch_z$),
\item $\varphi_z(z)=\delta_z$ and $|\varphi_z(x)|\le |\delta_z|$ for all $x\in\Omega$,
\item the supports of all $\varphi_z$ are disjoint,
\item $\varphi_z(y) = 0$ for all other vertices $y \neq z$ of elements in $\mathcal{T}_h^2$.
\end{enumerate}

\begin{lemma}
\label{lem:defect_correction}
Let $z\in\mathcal{Z}_h$ and $u\in W^{2,\infty}(\Omega)$. Then
\[
\|\nabla \varphi_z\|_{L^\infty} \le C h_K M,
\qquad
\|D^2 \varphi_z\|_{L^\infty} \le C M.
\]
Let
\[
\eta := \sum_{z \in \mathcal{Z}_h} \varphi_z,
\qquad
\widetilde{u} := u + \eta.
\]
Then
\[
|\eta|_{H^1(\Omega)} \le C h M,
\qquad
|\widetilde{u}|_{H^2(\Omega)} \le C M.
\]
\end{lemma}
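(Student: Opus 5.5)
The plan is a scaling argument. Fix one radially symmetric bump $\varphi\in C_c^\infty(\mathbb{R}^3)$ with $\operatorname{supp}\varphi=\overline{B(0,1)}$, $\varphi(0)=1$ and $|\varphi|\le 1$. Then the defect corrections must take the form
\[
\varphi_z(x)=\delta_z\,\varphi\!\left(\frac{x-z}{c h_z}\right).
\]

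\textbf{Pointwise bounds.} The chain rule gives
\[
\|\nabla\varphi_z\|_{L^\infty}\le |\delta_z|\,(c h_z)^{-1}\|\nabla\varphi\|_{L^\infty},
\qquad
\|D^2\varphi_z\|_{L^\infty}\le |\delta_z|\,(c h_z)^{-2}\|D^2\varphi\|_{L^\infty}.
\]
By the previous lemma, estimate \eqref{eq:defect_est} applied on $K_z$ gives $|\delta_z|\le C h_z^2 M$. Inserting this yields $\|\nabla\varphi_z\|_{L^\infty}\le C h_z M$ and $\|D^2\varphi_z\|_{L^\infty}\le CM$. The constants depend only on $\varphi$ and $c$, not on the shape of $K_z$; this is the whole point, since $h_z=\operatorname{diam}K_z$ is controlled even for a sliver. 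The ``$h_K$'' in the statement should be read as $h_z=h_{K_z}$.

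\textbf{Global bounds on $\eta$.} Disjointness of the supports (property 4) is what turns local bounds into global ones. On each $B(z,ch_z)$ only one term of $\eta$ is nonzero, so
\[
|\eta|_{H^1(\Omega)}^2=\sum_z\|\nabla\varphi_z\|_{L^2}^2\le\sum_z |B(z,ch_z)|\,C h_z^2M^2\le C h^2 M^2\sum_z |B(z,ch_z)|.
\]
Again by disjointness, $\sum_z|B(z,ch_z)|\le|\Omega|$ up to the part of the balls outside $\Omega$, which we simply drop by restricting to $\Omega$. Hence $|\eta|_{H^1(\Omega)}\le ChM$, with $|\Omega|$ absorbed into $C$. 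The same argument gives $\|D^2\eta\|_{L^\infty(\Omega)}=\max_z\|D^2\varphi_z\|_{L^\infty}\le CM$. Moreover $\eta$ is a locally finite sum of smooth functions with disjoint supports, so $\eta\in C^\infty$, and
\[
|\eta|_{H^2(\Omega)}\le|\Omega|^{1/2}\|D^2\eta\|_{L^\infty}\le CM.
\]

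\textbf{Bound on $\widetilde u$.} Finally, $|\widetilde u|_{H^2}\le|u|_{H^2}+|\eta|_{H^2}$. Since $u\in W^{2,\infty}(\Omega)$ and $\Omega$ is bounded, $|u|_{H^2(\Omega)}\le|\Omega|^{1/2}M$. This gives $|\widetilde u|_{H^2}\le CM$.

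\textbf{Main obstacle.} No single step is hard: the technical content is only the scaling computation. The point requiring care is that every constant must be independent of the geometry of the bad elements. That holds here because:
\begin{itemize}
\item the defect bound \eqref{eq:defect_est} depends only on $h_K$;
\item the support radius $ch_z$ is tied to the diameter, not to the inradius.
\end{itemize}
The existence of disjoint supports is an assumption (property 4), not something we prove. Without it, overlaps would multiply the $L^\infty$ bounds by the overlap count.
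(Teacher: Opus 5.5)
Your proposal is correct and follows essentially the same route as the paper. You obtain the pointwise bounds $|\delta_z|/h_z$ and $|\delta_z|/h_z^2$ by scaling, combine them with the vertex-defect estimate \eqref{eq:defect_est}, and use disjointness of the supports to sum the local bounds into the $H^1$ and $H^2$ seminorm estimates; your explicit formula for $\varphi_z$, your reading of $h_K$ as $h_z$, and your restriction of the balls to $\Omega$ just make precise what the paper leaves implicit.
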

\begin{proof}
Since the `height' of $\varphi_z$ is $|\delta_z|$ and its support has radius $h_z$, we have
\[
\|\nabla \varphi_z\|_{L^\infty(\Omega)} \le C \frac{|\delta_z|}{h_z}\leq Ch_z M,
\qquad
\|D^2 \varphi_z\|_{L^\infty(\Omega)} \leq \frac{|\delta_z|}{h_z^2} \leq C M,
\]
due to (\ref{eq:defect_est}).
Furthermore,
\[
|\eta|_{H^1(\Omega)}^2 =\sum_{z\in\mathcal{Z}_h} \|\nabla\varphi_z\|_{L^2(\Omega)}^2 \leq \max_{z\in\mathcal{Z}_h}\|\nabla\varphi_z\|_{L^\infty(\Omega)}^2 \sum_{z\in\mathcal{Z}_h} |\mathrm{supp}\,\varphi_z| \leq C h^2 M^2|\Omega|,
\]
since the supports of $\varphi_z$ are disjoint. Similarly, since $\|D^2 \varphi_z\|_{L^\infty(\Omega)} \leq C M$,
\[
|\eta|_{H^2(\Omega)} \le C M,\qquad |\widetilde{u}|_{H^2(\Omega)} \le |u|_{H^2(\Omega)} +|\eta|_{H^2(\Omega)}\leq |\Omega|^{1/2}M + C M,
\]
since $|u|_{H^2(\Omega)}^2\leq M^2|\Omega|$.
\end{proof}

\begin{theorem}
Let $\mathcal{T}_h^2$ have well-separated vertices, i.e. let there exist $c>0$ such that, for all $y,z\in \mathcal{Z}_h$, one has $|y-z|\ge c(h_y+h_z)$. Let $w_h:=\Pi_h\widetilde{u}$ be the Lagrange interpolation of $\widetilde{u}$. Then
\[
|u - w_h|_{H^1(\Omega)} \le C h M,
\]
hence, by Cea's lemma,
\[
|u - u_h|_{H^1(\Omega)} \le C h M.
\]
\end{theorem}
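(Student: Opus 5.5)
We plan to split the error with the triangle inequality,
\[
|u-w_h|_{H^1(\Omega)} \le |\eta|_{H^1(\Omega)} + |\widetilde u - \Pi_h\widetilde u|_{H^1(\Omega)},
\]
since $u-w_h = (\widetilde u - \Pi_h \widetilde u) - \eta$. Lemma~\ref{lem:defect_correction} already gives $|\eta|_{H^1(\Omega)}\le ChM$. The separation hypothesis $|y-z|\ge c(h_y+h_z)$ is what lets us take the bump radii small enough that the supports are disjoint and property 5 holds. Hence all the work is in bounding $|\widetilde u-\Pi_h\widetilde u|^2_{H^1(\Omega)}=\sum_K|\widetilde u-\Pi_K\widetilde u|^2_{H^1(K)}$, and we split this sum into good and bad elements.

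On $K\in\mathcal T_h^1$ we use the standard shape-regular interpolation estimate $|\widetilde u-\Pi_K\widetilde u|_{H^1(K)}\le Ch_K|\widetilde u|_{H^2(K)}$. Summing and using $|\widetilde u|_{H^2(\Omega)}\le CM$ from Lemma~\ref{lem:defect_correction} gives a contribution $\le ChM$.

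On $K\in\mathcal T_h^2$ the key claim is
\[
\Pi_K\widetilde u=\widetilde\Pi_K u+\Pi_K\eta' ,
\]
where $\eta'$ collects only bumps of vertices not belonging to $K$ (possibly none), and ideally $\Pi_K\widetilde u=\widetilde\Pi_K u$ exactly. To see this, we check the nodal values. At a defect vertex $z$ of $K$ (so $K_z=K$) we have $\widetilde u(z)=u(z)+\delta_z=\widetilde\Pi_K u(z)$, because $\varphi_y(z)=0$ for $y\ne z$ by property 5. At the anchor $A_K$ we have $\widetilde u(A_K)=u(A_K)+\eta(A_K)$. We need $\eta(A_K)=0$, which again follows from property 5 since $A_K$ is a vertex of an element of $\mathcal T_h^2$.

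Here lies the main obstacle: the definition implicitly assumes that each defect vertex belongs to a unique bad element and that the anchor $A_K$ is not itself a defect vertex of another bad element. We would read the well-separatedness assumption, together with property 5, as guaranteeing this; if necessary, we would restrict the choice of anchors accordingly. Granting it, $\Pi_K\widetilde u=\widetilde\Pi_K u$ and we write
\[
\widetilde u-\Pi_K\widetilde u=(u-\widetilde\Pi_K u)+\eta|_K .
\]
The first lemma (part 1) gives $|u-\widetilde\Pi_K u|_{H^1(K)}\le Ch_K|u|_{H^2(K)}$ independently of the shape of $K$. The second term is controlled by $\|\nabla\varphi_z\|_{L^\infty}\le Ch_zM$ on the disjoint supports.

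Summing over bad elements, the first terms give $\le Ch\,|u|_{H^2(\Omega)}\le ChM|\Omega|^{1/2}$. The $\eta$ terms are bounded by $|\eta|_{H^1(\Omega)}\le ChM$. This is a global bound and needs no per-element overlap counting, because $\sum_K|\eta|^2_{H^1(K)}=|\eta|^2_{H^1(\Omega)}$.

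Combining the three pieces yields $|u-w_h|_{H^1(\Omega)}\le ChM$. Since $w_h\in V_h$ ($\widetilde u$ is continuous, so the Lagrange interpolant is well defined and conforming), Cea's lemma gives the stated bound on $|u-u_h|_{H^1(\Omega)}$. If $u$ does not vanish on the boundary conditions side, we also note that the bumps near $\partial\Omega$ vanish at the boundary vertices, so $w_h\in V_h$ with the Dirichlet condition preserved.
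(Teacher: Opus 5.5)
Your core argument is the paper's: on bad elements the identity $\Pi_K\widetilde u=\widetilde\Pi_K u$ together with part~1 of the averaged-gradient lemma, and on good elements $u-w_h=(\widetilde u-\Pi_h\widetilde u)-\eta$ with shape-regular interpolation and Lemma~\ref{lem:defect_correction}. Applying the triangle inequality globally only adds and subtracts $\eta$ on bad elements, where the paper uses $u-w_h=u-\widetilde\Pi_K u$ directly.

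Your caveat about anchors is well taken, and the paper assumes it without comment. However, it cannot be read off the hypotheses. Property~5 only constrains $\varphi_z$ at points $y\neq z$, and the separation condition only involves points of $\mathcal Z_h$. So if an anchor $A_{K'}$ coincides with a defect vertex $z$ of another bad element, then $\eta(A_{K'})=\delta_z\neq 0$ and $\Pi_{K'}\widetilde u\neq\widetilde\Pi_{K'}u$. The condition has to be built into the choice of anchors. That is possible only when each bad element shares at most one of its vertices with the other bad elements, and that vertex is then taken as the common anchor. Do not claim it follows from well-separatedness.

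The step that fails as written is your last sentence on the Dirichlet condition. Nothing in properties 1--5 makes the bumps vanish at boundary vertices:
\begin{itemize}
\item If a defect vertex $z$ lies on $\partial\Omega$ (for instance, a sliver with an edge on the boundary), then $w_h(z)=\widetilde\Pi_{K_z}u(z)=u(z)+\delta_z=\delta_z$, which is generically nonzero.
\item A good-element boundary vertex $x$ inside $B(z,ch_z)$ gets $w_h(x)=\widetilde u(x)=\varphi_z(x)$, again nonzero in general.
\end{itemize}
Then $w_h$ does not satisfy the homogeneous Dirichlet condition built into the discrete space, and Cea's lemma cannot be applied with it as competitor. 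You need an extra assumption such as $\operatorname{dist}(z,\partial\Omega)\ge c\,h_z$ for all $z\in\mathcal Z_h$, meaning bad elements touch $\partial\Omega$ only through their anchors. This makes every $\varphi_z$ vanish on $\partial\Omega$, so $w_h=0$ there. The paper's ``hence, by Cea's lemma'' silently relies on the same point.
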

\begin{proof}
Split the domain into $\Omega_1 := \bigcup_{K\in \mathcal{T}_h^1} K$ and $\Omega_2 := \bigcup_{K\in \mathcal{T}_h^2} K$. We note that the well-separatedness assumption on $\mathcal{T}_h^2$ allows the supports of $\varphi_z$ to be disjoint and ensures that $\varphi_z(y) = 0$ for all other vertices $y \neq z$ of elements in $\mathcal{T}_h^2$, which were essential properties in the construction above.

\smallskip

\textbf{Bad elements.} On $K \in \mathcal{T}_h^2$ we have $w_h|_K := {\Pi}_K \widetilde{u} = \widetilde{\Pi}_K u$, since $\widetilde{\Pi}_K u$ attains the values of $\widetilde{u}$ at the vertices of $K$, hence $\widetilde{\Pi}_K u$ coincides with the Lagrange interpolation of $K$. Lemma \ref{lem:defect_correction} therefore gives us
\[
|u - w_h|_{H^1(K)} \le C h_K |u|_{H^2(K)}.
\]
Summing and using $|u|_{H^2(K)}^2\leq M^2|K|$ gives
\[
|u - w_h|_{H^1(\Omega_2)}^2=\sum_{K \in \mathcal{T}_h^2} |u - w_h|_{H^1(K)}^2\leq Ch^2M^2\sum_{K \in \mathcal{T}_h^2} |K| 
\le C h^2 M^2|\Omega|,
\]
hence $|u - w_h|_{H^1(\Omega_2)} \le C hM.$

\smallskip

\textbf{Good elements.}
On $\mathcal{T}_h^1$,
\[
u - w_h = (u - \widetilde{u}) + (\widetilde{u} - \Pi_h \widetilde{u}).
\]
Thus
\[
|u - w_h|_{H^1(\Omega_1)}
\le |\eta|_{H^1(\Omega)} + |\widetilde{u} - \Pi_h \widetilde{u}|_{H^1(\Omega_1)}.
\]
The first term is bounded by $C h M$ by Lemma \ref{lem:defect_correction}. For the second term, standard interpolation theory on shape-regular elements gives
\[
|\widetilde{u} - \Pi_h \widetilde{u}|_{H^1(\Omega_1)}
\le C h |\widetilde{u}|_{H^2(\Omega)}
\le C h M.
\]
Combining the estimates completes the proof.
\end{proof}

\begin{remark}
Once we construct the gradient-averaged interpolation $\widetilde{\Pi}_K u$ on a `bad' element $K$, which no longer interpolates $u$ at the defect vertices (e.g. $z\in\mathcal{Z}_h$), it is tempting to simply modify the Lagrange interpolation on the neighboring `nice' element $\widetilde{K}$ so that it attains the value $\widetilde{\Pi}_K u(z)$ at $z$. This is certainly possible; however, one must then impose additional conditions on the nice element $\widetilde{K}$ for this to work. Consider, for example, the case where $\widetilde{K}$ is very small, e.g. $h_{\widetilde{K}}=h^2$. Now we make an $\mathcal{O}(h^2)$ perturbation to one of its vertex values (this is the vertex defect on $K$). Then the gradient of the interpolant on $\widetilde{K}$ changes like $\mathcal{O}(h^2)/h^2=\mathcal{O}(1)$, which destroys the desired $\mathcal{O}(h)$ approximation properties of the gradient. In our approach, we circumvent this by letting the function $u$ carry the burden of the vertex defects, modifying it to $\widetilde{u}$, which is then interpolated using only the assumption of shape regularity of the `nice' elements. Effectively, using $\varphi_z$, the influence of the defect can be smeared over a neighborhood rather than a single neighboring element.
\end{remark}

\begin{remark}
For clarity, the gradient-averaged interpolation construction presented here is a heavily simplified version of the more general construction from \cite{kuvcera2016necessary}. Here, we interpolate at one vertex $A_K$ of $K$, introducing $\mathcal{O}(h^2)$ defects at the other three vertices of $K$. In the 3D version of \cite{kuvcera2016necessary}, one interpolates $u$ at three vertices $A_K,B_K,C_K$ and introduces a defect only at the one remaining vertex. This is more general, since only the fourth vertex $D_K$ must then be separated from the others (to ensure non-overlapping supports of $\varphi_z$), while $A_K,B_K,C_K$ can be trivially connected to other `nice' elements or other non-defect vertices of other `bad' elements. Thus, one can connect the `bad' elements into large structures spanning the whole domain $\Omega$ while still obtaining $\mathcal{O}(h)$ convergence. Moreover, the vertex defect is estimated more finely in \cite{kuvcera2016necessary}, allowing for less strict separation of the defect vertices.
\end{remark}

%% file: tfem.tex
\section{The Tempered Finite Element Method for Slivers}
\label{sec:tfem}

To evaluate whether a finite element is suitable for computation, one often
examines the interpolation error. This viewpoint has led 
to classical geometric conditions for 2D triangular meshes, such as the
maximum-angle condition introduced by Babuška and Aziz \cite{babuvska1976angle}. 
The convergence of the interpolation error implies the convergence of the finite element method. However, it would be incorrect to conclude that the method cannot converge when the interpolation error does not, since the converse implication does not hold.

A particularly illustrative example is given in Fig.~\ref{fig:hannukainen} from 
Hannukainen et al.~\cite{hannukainen2012maximum}. Two mesh sequences are compared: the first exhibits optimal interpolation and FEM convergence rates, whereas the second—constructed by splitting each triangle into three, with one of them becoming extremely flattened—violates the maximum-angle condition and shows no interpolation convergence.
Despite this, its FEM error still converges optimally. The key observation is 
that the discrete space $V_{h2}$ associated with the refined meshes is a superset of
the space $V_{h1}$ of the original ones. Céa's lemma shows that the FEM solution 
is the best solution in the natural norm; hence, the solution on the \emph{ugly mesh}
must be at least as good as the one in the \emph{beautiful mesh}. 
This example shows that {\bf the maximum-angle condition is not a 
necessary condition for finite element convergence}.
\begin{figure}[h!]
    \centering
    \includegraphics[width=0.9\textwidth]{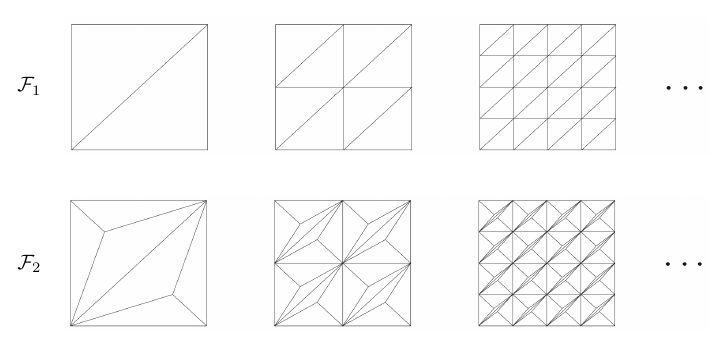}
    \caption{Two families of meshes from \cite{hannukainen2012maximum}: $\mathcal{F}_1$ satisfies the maximum angle condition, while $\mathcal{F}_2$ violates it.}
    \label{fig:hannukainen}
\end{figure}

\subsection{Tempering the equations}
In \S\ref{sec:deg}, we showed that the locking conditions induced by degenerate elements arise from the dominant contribution of their stiffness matrices, which tend to infinity as the element collapses. This behaviour results from the fact that the gradients of the basis functions grow unbounded while the element volume vanishes, producing extremely large entries in the local stiffness matrix. To evaluate both the integrals and the gradients of the basis functions, the standard finite element formulation relies on a reference element $\hat{E}$ with coordinates $\bm{\xi}$ and a mapping between the reference and physical elements, as illustrated in Figure \ref{fig:mapping}.

Let
$$
J =
\begin{pmatrix}
x_\xi & x_\eta & x_\zeta \\
y_\xi & y_\eta & y_\zeta \\
z_\xi & z_\eta & z_\zeta
\end{pmatrix}$$ and 
$\det J = |J| =
x_\xi (y_\eta z_\zeta - y_\zeta z_\eta)
- x_\eta (y_\xi z_\zeta - y_\zeta z_\xi)
+ x_\zeta (y_\xi z_\eta - y_\eta z_\xi).
$
%Then
%\[
%\frac{\partial(\xi,\eta,\zeta)}{\partial(x,y,z)}
%= J^{-1}
%= \frac{1}{|J|}
%\begin{pmatrix}
% y_\eta z_\zeta - y_\zeta z_\eta
% & -x_\eta z_\zeta + x_\zeta z_\eta
% & x_\eta y_\zeta - x_\zeta y_\eta
%\\[0.4em]
% -y_\xi z_\zeta + y_\zeta z_\xi
% & x_\xi z_\zeta - x_\zeta z_\xi
% & -x_\xi y_\zeta + x_\zeta y_\xi
%\\[0.4em]
% y_\xi z_\eta - y_\eta z_\xi
% & -x_\xi z_\eta + x_\eta z_\xi
% & x_\xi y_\eta - x_\eta y_\xi
%\end{pmatrix}.
%\]

Let us go back to the stiffness matrix of a sliver computed in \S\ref{sec:deg}. Assuming 
$|J| = 8 h^2 \varepsilon$, we use TFEM here to stabilize 
the sliver
\[
K = D 
\frac{ 8 h^4}{3 |J|} 
\underbrace{\begin{pmatrix}
1 & -1 & 1 & -1\\
-1& 1 & -1& 1 \\
1 & -1& 1 & -1\\
-1& 1 & -1& 1
\end{pmatrix}}_{K^{(1/\varepsilon)}}
+{\varepsilon \over 6}
\underbrace{
\begin{pmatrix}
1 & 0 & -1 & 0\\
0 & 1 & 0 & -1\\
-1& 0 & 1 & 0\\
0 & -1& 0 & 1
\end{pmatrix}}_{K^{(\varepsilon)}}
\]
with $D = {|J| \over \max(|J|,|J|_{\min})}$ and
\begin{align} \nonumber
        |J|_{\min} = h^4 \frac{|u|_{\WIIi(\B)}}{|u|_{\WIi(\B)}} = {h^4 \over H}
\end{align}
where
$H$ is a length and $\B$ is the band of slivers.
In our context, if $\mathcal B$ is a patch of diameter $D$,
then $H \sim D$. For \emph{bad} functions at the mesh scale -- take $\sin(2\pi x/L)$ as an example -- one even has explicitly
\[
\frac{|u|_{\WIIi(\B)}}{|u|_{\WIi(\B)}} 
\,\sim\, \frac{1}{L}.
\]
%\footnote{On an interval 
%nico: valable aussi en 2D ?- AQ: oui 
%$[a,b]$, one has the classical inequality
%\[
%\|f'\|_{L^\infty(a,b)}
%    \;\le\; (b-a)\,\|f''\|_{L^\infty(a,b)}.
%\]
%It follows that
%\[
%\frac{\|f''\|_{L^\infty(a,b)}}{\|f'\|_{L^\infty(a,b)}}
%    \;\ge\; \frac{1}{b-a}.
%\]
%This provides an a priori bound on the ratio 
%$\max |f''| / \max |f'|$ without requiring explicit knowledge of the function~$f$.
%}. 
Thus, the stabilization is
triggered when $8h^2 \varepsilon < h^4/H$ and thus when $\varepsilon  < {h^2 \over 8 H}$. At that point, the
matrix $K^{(1/\varepsilon)}$ remains constant and $K$ is computed as
\[
K = 
\frac{8H}{3} 
\begin{pmatrix}
1 & -1 & 1 & -1\\
-1& 1 & -1& 1 \\
1 & -1& 1 & -1\\
-1& 1 & -1& 1
\end{pmatrix}
+{\varepsilon \over 6}
\begin{pmatrix}
1 & 0 & -1 & 0\\
0 & 1 & 0 & -1\\
-1& 0 & 1 & 0\\
0 & -1& 0 & 1
\end{pmatrix}.
\]

\begin{figure}[!ht]
\input{tetref}
\caption{Mapping between the physical and reference domains.} \label{fig:mapping}
\end{figure}

%% file: tetref.tex
\begin{tikzpicture}[scale=1.1]

% ===== Axes de gauche (x,y,z) =====
\draw[->] (0,0,0) -- (1.6,0,0) node[right] {$\xi$};
\draw[->] (0,0,0) -- (0,1.6,0) node[above] {$\eta$};
\draw[->] (0,0,0) -- (-1.0,-0.8,0) node[left] {$\zeta$};

% Un tétraèdre quelconque E
\coordinate (A) at (0,0,0);
\coordinate (B) at (1.3,0.,0);
\coordinate (C) at (0.0,1.3,0);
\coordinate (D) at (-.81*.85,-0.65*.85,0);

\fill[gray!15] (A) -- (B) -- (C) -- cycle;
\fill[gray!25] (A) -- (B) -- (D) -- cycle;
\fill[gray!35] (A) -- (C) -- (D) -- cycle;

\draw (A)--(B)--(C)--cycle;
\draw (A)--(D)--(B);
\draw (C)--(D);

\node at (0.7,1.45) {$E$};

% ===== Axes de droite (xi,eta,zeta), plus grands =====
\begin{scope}[shift={(6,0)}]

  % ---- Tétraèdre de référence dans ce repère ----
  \coordinate (O) at (-1,-.5,0);         % (0,0,0)
  \coordinate (X) at (0.6,.5,0);       % (1,0,0) direction xi
  \coordinate (Y) at (-.4,2.6,0);       % (0,1,0) direction eta
  \coordinate (Z) at (-2.3,1.0,0);    % (0,0,1) direction zeta

  % Faces
  \fill[gray!15] (O)--(X)--(Y)--cycle;
  \fill[gray!25] (O)--(X)--(Z)--cycle;
  \fill[gray!35] (O)--(Y)--(Z)--cycle;

  % Arêtes du tétraèdre
  \draw (O)--(X)--(Y)--cycle;
  \draw (O)--(Z);
  \draw[dashed] (Z)--(X);
  \draw (Y)--(Z);

\draw[->] (0+1,0,0) -- (1.6+1,0,0) node[right] {$x$};
\draw[->] (0+1,0,0) -- (0+1,1.6,0) node[above] {$y$};
\draw[->] (0+1,0,0) -- (-1.0+1,-0.8,0) node[left] {$z$};

  % Labels sommets
%  \node[below] at (O) {$(0,0,0)$};
%  \node[below] at (X) {$(1,0,0)$};
%  \node[right] at (Y) {$(0,1,0)$};
%  \node[left]  at (Z) {$(0,0,1)$};

  % Label \hat{E}
  \node at (1.3,1.4) {$\widehat{E}$};

\end{scope}

% ===== Flèches de transformation entre les deux =====

\draw[-latex] (5.4,2.7) arc (50:140:3.6)
  node[midway, above] {$\mathbf{x}(\boldsymbol{\xi})$};

\draw[-latex] (2.3,-0.6) arc (-130:-40:3.6)
  node[midway, below] {$\boldsymbol{\xi}(\mathbf{x})$};

\end{tikzpicture}

%% file: stresstest.tex
\subsection{Stress test}

We now consider a single, deliberately extreme stress test. The idea is to solve a problem
with a manufactured solution, namely a Poisson equation of the form
$-\nabla^2 u = -\nabla^2 f$, where $f(x,y,z)$ is prescribed.
We assess the behaviour of a direct application of TFEM in this challenging setting.

We solve the problem in a cube of side length~$2$. Inside this cube, we insert two spheres,
both centred at the centre of the cube and with very close radii, $R_1 = 1$ and
$R_2 = 1.01$. We then generate a mesh of the three volumetric domains and move all the points
lying on the sphere of radius $R = 1.0$ onto the sphere of radius $R = 1.01$.
All tetrahedra in the volume between these two spheres are therefore degenerate.

As a first step, we use Gmsh’s transfinite-type meshing functions to mesh the
surfaces of these two spheres, which ultimately produces exclusively sliver
elements (see Figure~\ref{fig:2spheres}).

\begin{figure}[!ht]
    \centering
    \includegraphics[width=0.85\linewidth]{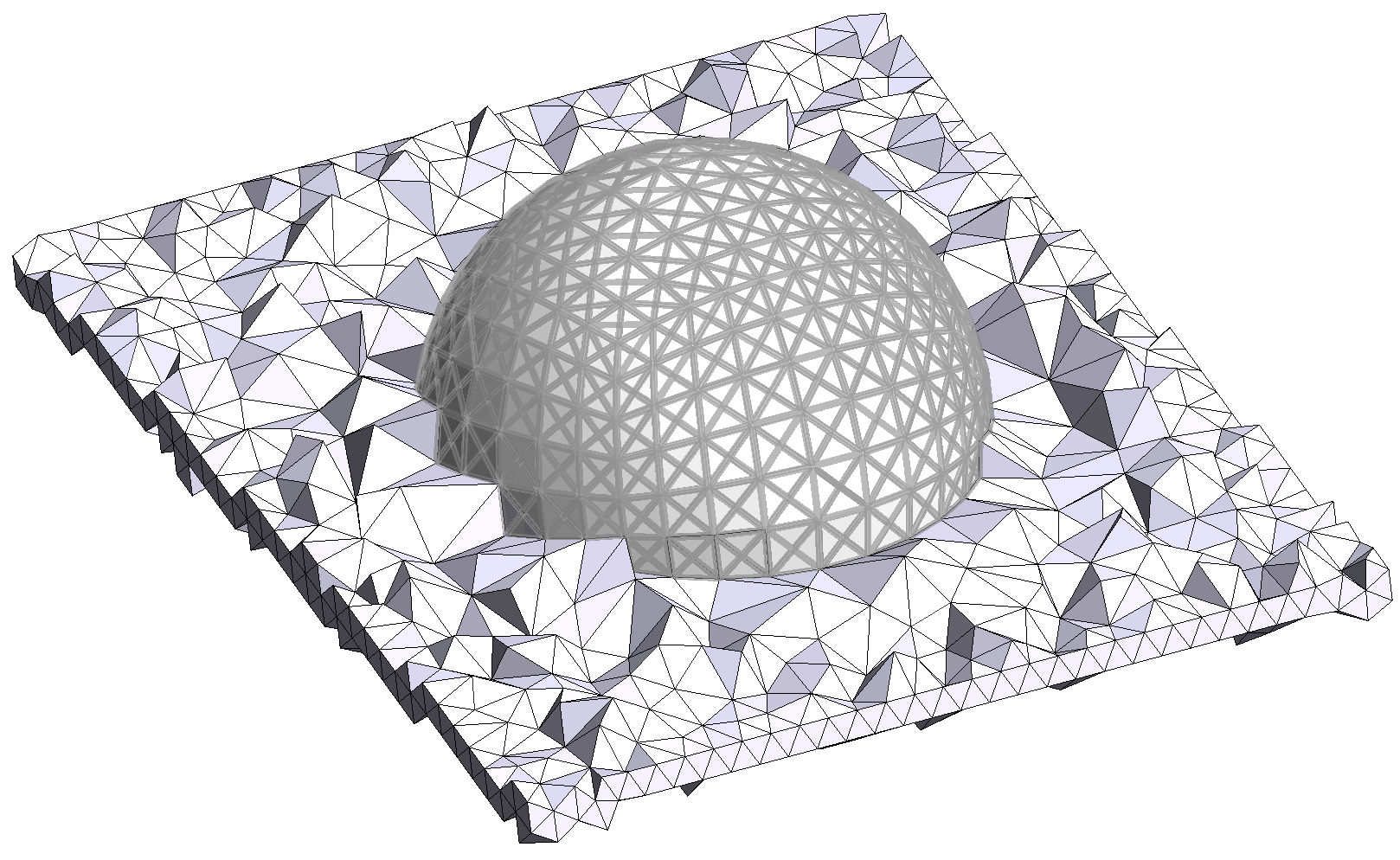}
    \caption{Extreme mesh where a complete region of the mesh is made
    of slivers.}
    \label{fig:2spheres}
\end{figure}

In this mesh, the element size varies. In the context of TFEM stabilization,
we use the local mesh size to determine whether the limit
of $|J|_{\min}$ has been reached.

\medskip
\noindent {\bf Manufactured solution:} We consider the sinusoidal function
$u_{ex}(x,y,z) = \sin(\alpha x)\sin(\alpha y)\sin(\alpha z)$ with $\alpha = 2\pi/8$, and solve
\[
-\nabla^2 u = 3 \alpha^2 u_{ex}(x,y,z).
\]
We anticipate that $H=8$ is a suitable choice for TFEM in this case.

\input{optimal_choice}

The influence of the parameter $H$ is assessed by measuring the $L^2$ error
for a range of mesh sizes $h$ and different values of $H$. The results show
that the optimal choice of $H$ is essentially independent of the mesh size,
with all curves exhibiting a clear minimum around $H = 8$. This confirms that
the parameter can be selected based on the analytical structure of the problem
rather than on discretization considerations.

When $1/H$ is chosen too small, the stabilization becomes overly stiff and
leads to a locking behaviour, resulting in a significant degradation of the
solution. Conversely, when $1/H$ is too large, the stabilization is excessively
weak and alters the solution itself, again increasing the error. The value
$H = 8$ therefore provides a balance between these two effects, yielding a
minimum error that is of the same order, or even smaller, than that obtained on regular meshes.

%% file: optimal_choice.tex
\begin{figure}[!ht]
    \centering
    \begin{tikzpicture}
        \begin{axis}[
            xlabel={$\frac{1}{H}$},
            ylabel={$L^2$ error -- $\sqrt{\frac{\int \|u-u_h\|^2 dv}{\int \|u\|^2 dv}}$ }, %multiple version, a clarifier
            legend style={at={(0.65,0.4)},anchor=west},
            ymajorgrids=true,
            grid=none,
            ymode=log,
            xmode=log,
            ymin=0.001,
            xmax=35,
            width=\textwidth,
            height=0.7\textwidth,
            y label style={at={(axis description cs:0.35,0.8)},rotate=-90,anchor=south},
            legend style={draw=none}
        ]
        %h=0.2
        \addplot[
            color=blue,
            mark = *,
            mark size=1.5pt,
            line width=1pt
            ]
            coordinates {
                (1.000000e-08, 2.172513e-01) (1.603719e-08, 2.171699e-01) (2.571914e-08, 2.170395e-01) (4.124626e-08, 2.168309e-01) (6.614741e-08, 2.164975e-01) (1.060818e-07, 2.159659e-01) (1.701254e-07, 2.151208e-01) (2.728333e-07, 2.137845e-01) (4.375479e-07, 2.116884e-01) (7.017038e-07, 2.084415e-01) (1.125336e-06, 2.035079e-01) (1.804722e-06, 1.962219e-01) (2.894266e-06, 1.858907e-01) (4.641589e-06, 1.720241e-01) (7.443803e-06, 1.546557e-01) (1.193777e-05, 1.345615e-01) (1.914482e-05, 1.131593e-01) (3.070291e-05, 9.209718e-02) (4.923883e-05, 7.280550e-02) (7.896523e-05, 5.621502e-02) (1.266380e-04, 4.270705e-02) (2.030918e-04, 3.219314e-02) (3.257021e-04, 2.427489e-02) (5.223345e-04, 1.844757e-02) (8.376776e-04, 1.421812e-02) (1.343399e-03, 1.118209e-02) (2.154435e-03, 9.033611e-03) (3.455107e-03, 7.520991e-03) (5.541020e-03, 6.456889e-03) (8.886238e-03, 5.713059e-03) (1.425103e-02, 5.189946e-03) (2.285464e-02, 4.799365e-03) (3.665241e-02, 4.479006e-03) (5.878016e-02, 4.176666e-03) (9.426685e-02, 3.847077e-03) (1.511775e-01, 3.594640e-03) (2.424462e-01, 3.729335e-03) (3.888155e-01, 5.576770e-03) (6.235507e-01, 2.559933e-02) (1.000000e+00, 1.597447e-01)
            };
        
        % h=0.15
        \addplot[
            color=orange,
            mark = triangle*,
            mark size=1.5pt,
            line width=1pt
            ]
            coordinates {
               (1.000000e-08, 2.222947e-01) (1.603719e-08, 2.222530e-01) (2.571914e-08, 2.221863e-01) (4.124626e-08, 2.220793e-01) (6.614741e-08, 2.219081e-01) (1.060818e-07, 2.216342e-01) (1.701254e-07, 2.211971e-01) (2.728333e-07, 2.205012e-01) (4.375479e-07, 2.193981e-01) (7.017038e-07, 2.176615e-01) (1.125336e-06, 2.149566e-01) (1.804722e-06, 2.108120e-01) (2.894266e-06, 2.046150e-01) (4.641589e-06, 1.956724e-01) (7.443803e-06, 1.833849e-01) (1.193777e-05, 1.675369e-01) (1.914482e-05, 1.485745e-01) (3.070291e-05, 1.276496e-01) (4.923883e-05, 1.063451e-01) (7.896523e-05, 8.621245e-02) (1.266380e-04, 6.842284e-02) (2.030918e-04, 5.360325e-02) (3.257021e-04, 4.184911e-02) (5.223345e-04, 3.286432e-02) (8.376776e-04, 2.617905e-02) (1.343399e-03, 2.129774e-02) (2.154435e-03, 1.777732e-02) (3.455107e-03, 1.524869e-02) (5.541020e-03, 1.342993e-02) (8.886238e-03, 1.210373e-02) (1.425103e-02, 1.111890e-02) (2.285464e-02, 1.036436e-02) (3.665241e-02, 9.723473e-03) (5.878016e-02, 9.087431e-03) (9.426685e-02, 8.421136e-03) (1.511775e-01, 7.968666e-03) (2.424462e-01, 8.789750e-03) (3.888155e-01, 2.305196e-02) (6.235507e-01, 1.342159e-01) (1.000000e+00, 5.917121e-01)
            };

        % h=0.3
        \addplot[
            color=red,
            mark = square*,
            mark size=1.5pt,
            line width=1pt
            ]
            coordinates {
                (1.000000e-08, 2.151690e-01) (1.603719e-08, 2.150373e-01) (2.571914e-08, 2.148267e-01) (4.124626e-08, 2.144900e-01) (6.614741e-08, 2.139532e-01) (1.060818e-07, 2.131000e-01) (1.701254e-07, 2.117510e-01) (2.728333e-07, 2.096359e-01) (4.375479e-07, 2.063616e-01) (7.017038e-07, 2.013912e-01) (1.125336e-06, 1.940609e-01) (1.804722e-06, 1.836850e-01) (2.894266e-06, 1.697843e-01) (4.641589e-06, 1.523896e-01) (7.443803e-06, 1.322406e-01) (1.193777e-05, 1.106938e-01) (1.914482e-05, 8.937394e-02) (3.070291e-05, 6.977891e-02) (4.923883e-05, 5.296089e-02) (7.896523e-05, 3.936884e-02) (1.266380e-04, 2.892070e-02) (2.030918e-04, 2.118211e-02) (3.257021e-04, 1.558916e-02) (5.223345e-04, 1.160895e-02) (8.376776e-04, 8.799827e-03) (1.343399e-03, 6.820616e-03) (2.154435e-03, 5.434610e-03) (3.455107e-03, 4.465031e-03) (5.541020e-03, 3.792175e-03) (8.886238e-03, 3.327173e-03) (1.425103e-02, 3.000615e-03) (2.285464e-02, 2.763315e-03) (3.665241e-02, 2.575243e-03) (5.878016e-02, 2.402008e-03) (9.426685e-02, 2.216547e-03) (1.511775e-01, 2.069777e-03) (2.424462e-01, 2.125649e-03) (3.888155e-01, 2.794032e-03) (6.235507e-01, 7.781993e-03) (1.000000e+00, 5.254775e-02)
            };

        % ref h=0.2
        \addplot[color=blue, dashed]
            coordinates{
                (1.000000e-08, 4.354997e-03) (1, 4.354997e-03) 
            };

        % ref h=0.15
        \addplot[color=orange, dashed]
            coordinates{
                (1.000000e-08, 9.652975e-03) (1, 9.652975e-03) 
            };

        % ref h=0.3
        \addplot[color=red, dashed]
            coordinates{
                (1.000000e-08, 2.465000e-03) (1.000000e+00, 2.465000e-03)
            };

        \addplot[color = black, dashed]
        coordinates{(0.125, 0.001) (0.125, 0.1)
        };

        \node [black] at (rel axis cs: 
            0.76, 0.73) { $H=8$ };

        \node [orange] at (rel axis cs: 
            0.915, 0.88) { $h=0.3$ };

        \node [blue] at (rel axis cs: 
            0.915, 0.77) { $h=0.2$ };

        \node [red] at (rel axis cs: 
            0.92, 0.62) { $h=0.15$ };

        \node [black] at (rel axis cs: 
            0.25, 0.26) { Regular meshes };

        \end{axis}
    \end{tikzpicture} 
    \caption{Optimal choice of the $H$ parameter for a Laplacian stress test.}
    \label{fig:optimal_choice}
\end{figure}

%% file: physics.tex
\section{Applications to physical problems}
\label{sec:physics}
In the seminal TFEM paper \cite{quiriny2024tempered}, the focus was primarily on the theoretical foundations of the method, with numerical illustrations limited to simplified configurations aimed at demonstrating its convergence properties. In the present section, we move beyond these preliminary results and investigate the practical performance of TFEM on a range of physically relevant problems. To this end, we consider four representative applications: incompressible fluid flows, Cahn–Hilliard phase-field dynamics, wave propagation, and vibro-acoustic fluid–structure interactions in the frequency domain.

These test cases have been selected to span a wide variety of mathematical and physical features, including incompressibility constraints, nonlinear and higher-order operators, transient wave phenomena, and coupled multiphysics interactions across interfaces. For each problem, we deliberately introduce sliver elements into the computational mesh in order to assess the impact of severe element distortion on the numerical solution. The results obtained with TFEM are systematically compared to those of standard finite element discretizations on the same meshes. Through these examples, we demonstrate that TFEM consistently delivers accurate and stable solutions in situations where classical FEM suffers from locking and loss of accuracy, thereby highlighting the robustness and versatility of the approach across a broad spectrum of applications. 

All incompressible Navier–Stokes simulations presented in this section are performed using the MigFlow solver \cite{CONSTANT2019213}, while the remaining applications—namely the Cahn–Hilliard, wave propagation, and vibro-acoustic problems—are implemented using AutoFreeFem \cite{Allaire2024} for automated code generation, and subsequently solved with FreeFEM \cite{MR3043640}. This combination allows us to rely on well-established and efficient computational frameworks while ensuring a consistent and reproducible implementation of both classical FEM and TFEM formulations across all considered physical models.
\subsection{Incompressible fluid flow}
In this example, we solve the incompressible Navier-Stokes equations on a 3D domain with slivers. The Navier-Stokes equations describe the motion of viscous fluids and are given by
\begin{align}
\rho \left( \frac{\partial \mathbf{u}}{\partial t} + \mathbf{u} \cdot \nabla \mathbf{u} \right) &= -\nabla p + \mu \nabla^2 \mathbf{u} + \rho \mathbf{g} \\
\nabla \cdot \mathbf{u} &= 0
\end{align}
where $\mathbf{u}$ is the velocity field, $p$ is the pressure, $\rho$ is the density, $\mu$ is the dynamic viscosity, and $\mathbf{g}$ is the gravity. \\
These equations are discretized in time using a simple implicit Euler scheme, and the spatial discretization is performed using P1 finite elements for both velocity and pressure. This spatial discretization is known to be unstable because it does not satisfy the inf-sup condition; we therefore stabilize the problem using a simple PSPG stabilization. The advection term with continuous finite elements can lead to oscillations, so we also stabilize it using a SUPG stabilization. The resulting linear system is solved using a direct solver.  \\ \\
\\
\\
We consider a simple Poiseuille flow in a cylindrical pipe. The domain is a cylinder of length $L = 2 \, [m]$ and radius $R=0.4 \, [m]$. The boundary conditions are no-slip on the pipe walls and a parabolic velocity profile at the inlet. The outlet is treated with a simple outflow condition with fixed pressure, and gravity is set to zero. This problem has an analytical solution, which allows us to compare the results obtained with TFEM and traditional FEM through a convergence study.\\
We compare the results obtained with classical FEM and TFEM on two deliberately degraded meshes. In the first configuration, a spherical region of sliver elements is artificially introduced at the center of the pipe. This sphere is constructed as described in Section~\ref{sec:tfem}, with radius $r=\frac{1}{2}R$. In the second configuration, the degradation is more severe: an entire central section of the pipe is replaced by fully degenerated elements.

The results are reported in Figure~\ref{fig:poiseuille_comparison}. In both configurations, the velocity profile obtained with TFEM remains in excellent agreement with the analytical parabolic solution. Conversely, the classical FEM fails to accurately represent the solution within the degenerated regions. In the presence of sliver elements, the strong constraints induced by the classical formulation drastically reduce the effective number of degrees of freedom available to the solution, thereby preventing an accurate representation of the parabolic velocity profile and leading to significant errors. This limitation becomes even more pronounced when an entire section of the pipe is affected, resulting in a severely degraded solution.

By contrast, TFEM retains its ability to capture the correct velocity profile even in highly distorted regions, demonstrating its robustness with respect to mesh quality. The convergence study presented in Figure \ref{fig:NS_conv} confirms that TFEM achieves the expected convergence rates for the velocity field at the outlet of the pipe in both configurations, whereas the classical FEM fails to converge.

These results highlight the robustness of TFEM for incompressible flow problems in the presence of severely distorted or degenerate elements, in contrast with the standard FEM, which may lose both accuracy and convergence.
\begin{figure}[H]
    \centering
    \begin{tikzpicture}
        \node[anchor=center] at (0,0) {
        \includegraphics[width=0.45\textwidth,trim={600 0 500 0},clip]{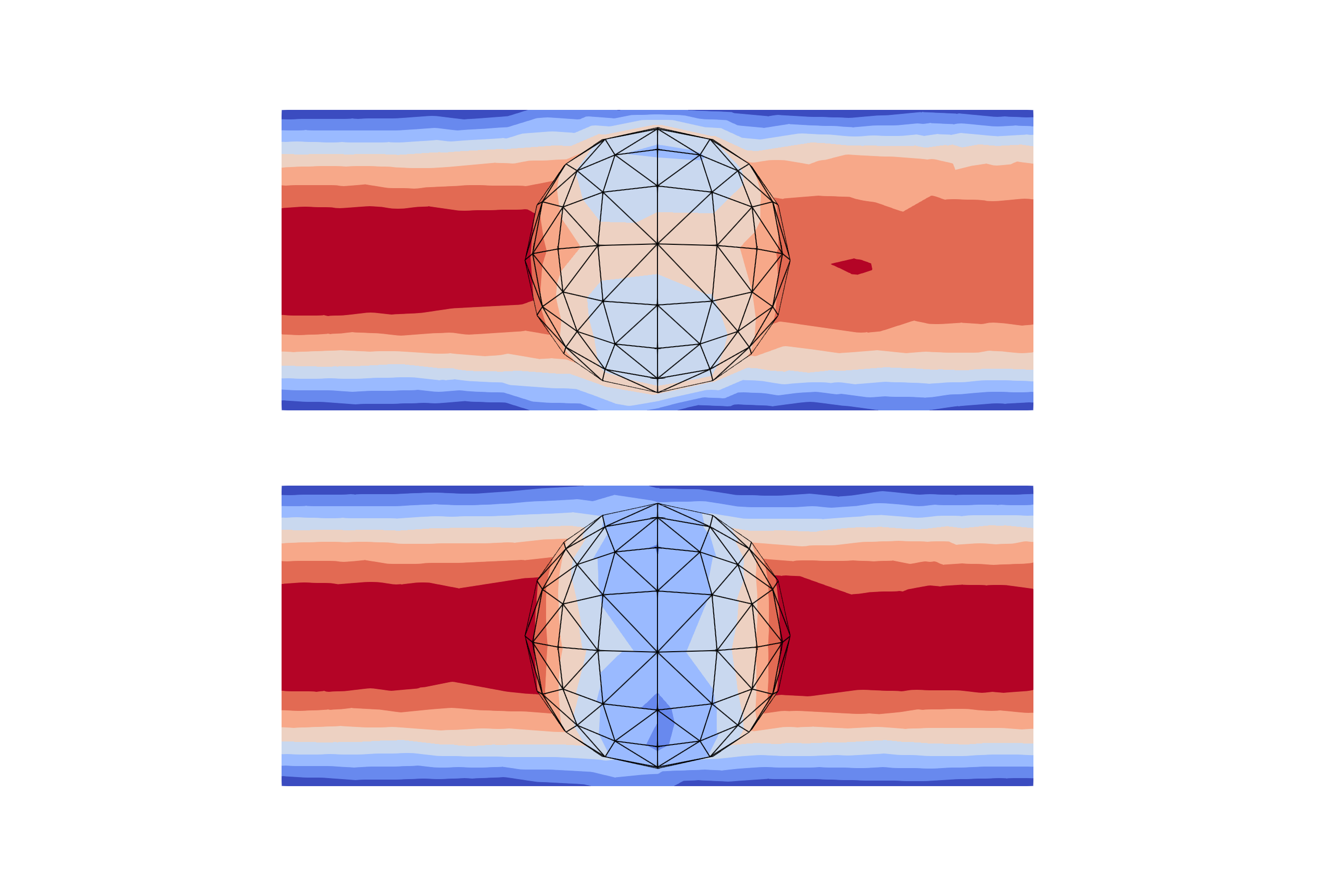}
        };
        \node[anchor=center] at (-3.5,1.4) {\textbf{FEM}};
        \node[anchor=center] at (-3.5,-1.4) {\textbf{TFEM}};
        \node[anchor=center] at (5,0) {
        \includegraphics[width=0.45\textwidth,trim={600 0 500 0},clip]{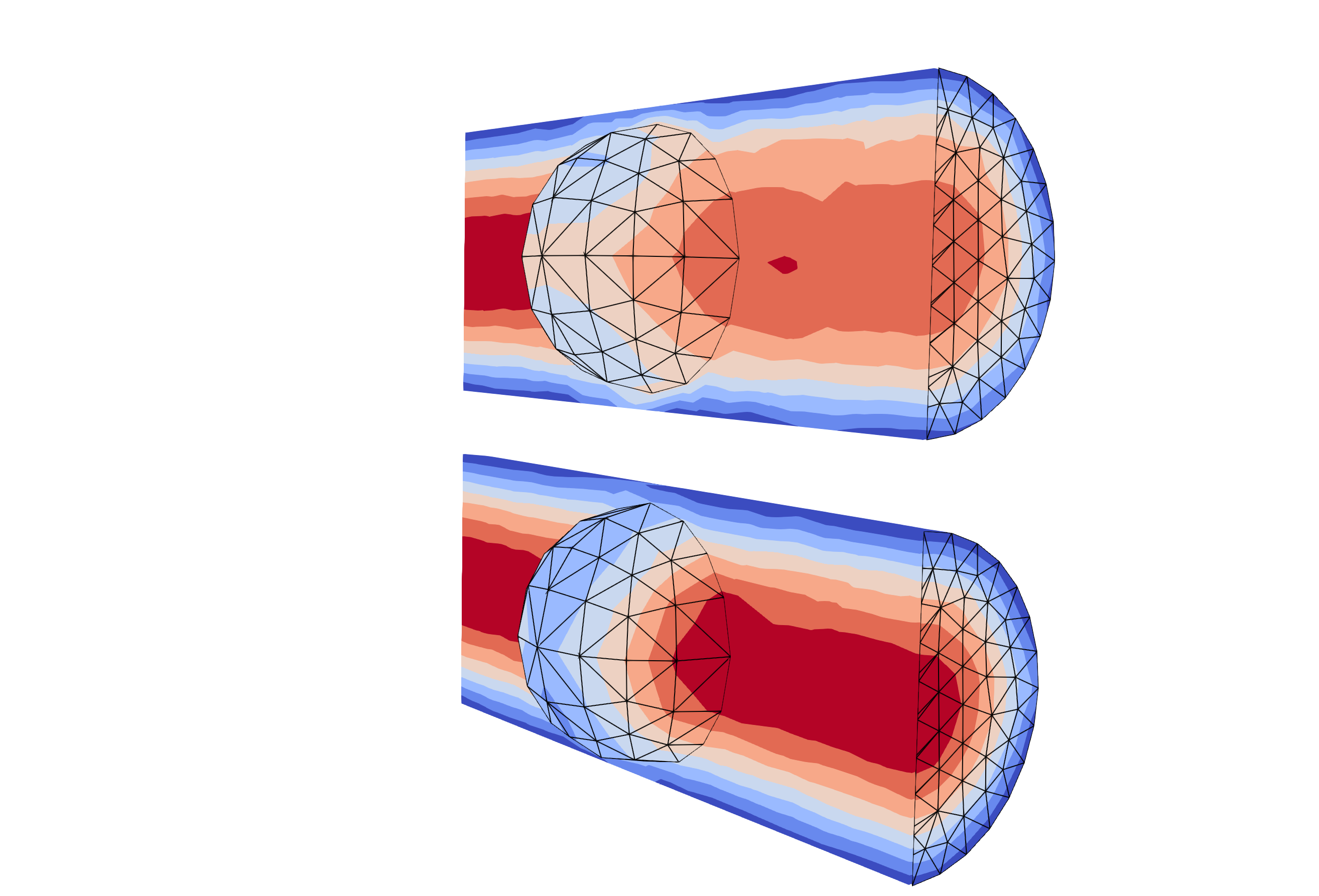}
        };
        
        \node[anchor=center] at (2.5,-7) {
        \includegraphics[width=\textwidth,trim={0 0 0 300},clip]{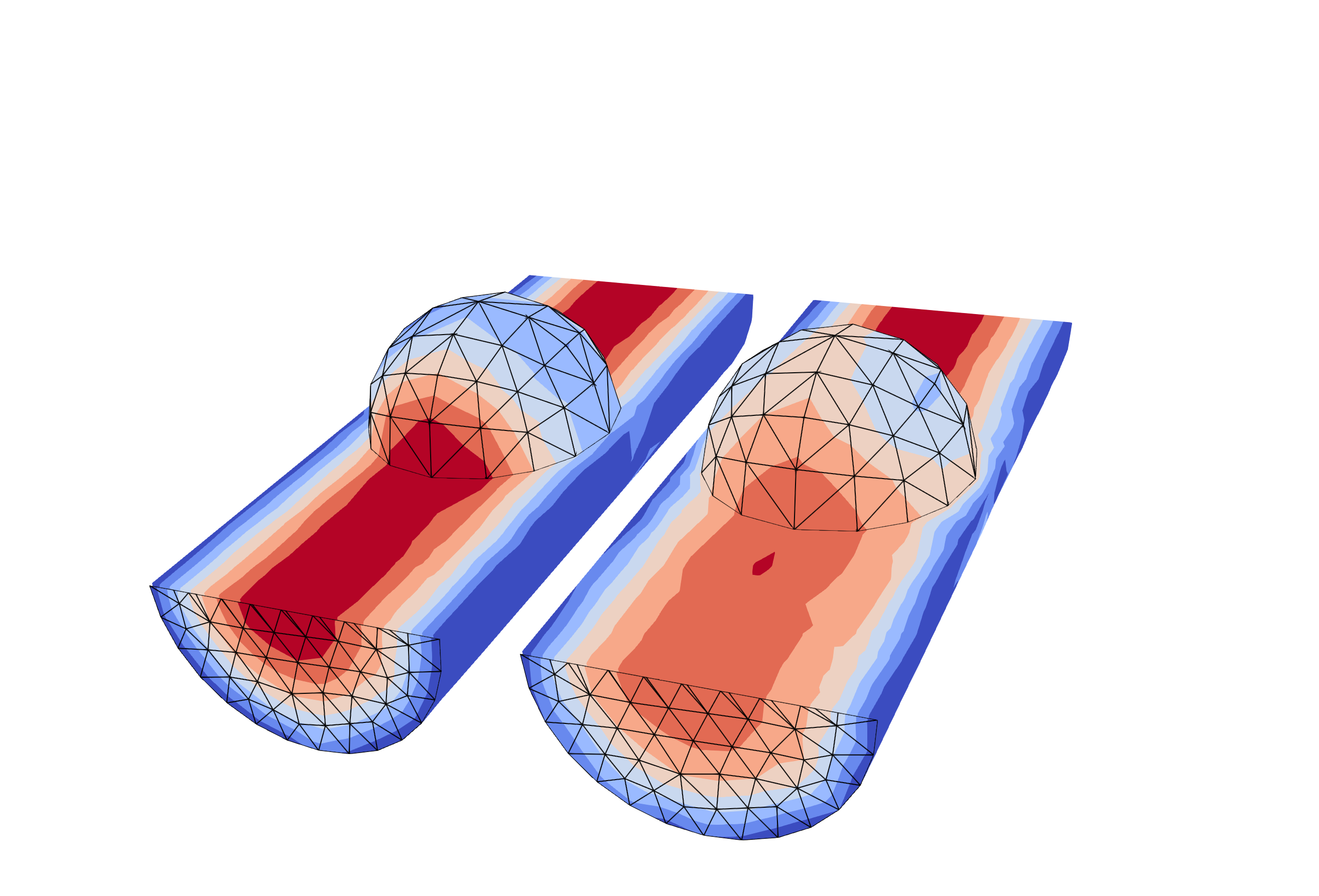}
        };
        \node[anchor=center, rotate=-5] at (5.15,-4.75) {\textbf{FEM}};
        \node[anchor=center, rotate=-5] at (2.5,-4.5) {\textbf{TFEM}};
    \end{tikzpicture}
    \caption{Multiple views of the velocity field for the Poiseuille flow problem in a cylindrical pipe with a sphere of slivers. Solution obtained with traditional FEM and with TFEM on the same mesh.}
    \label{fig:poiseuille_comparison}
\end{figure}

\input{fluid_convergence}

\newpage
\subsection{Cahn-Hilliard phase-field problem}
In this example, we consider the Cahn–Hilliard phase-field problem in a three-dimensional domain containing sliver elements. The computational mesh is constructed by inserting a thin sheet of slivers in the middle of a cubic domain, as illustrated in Fig.\ref{fig:CH_wave_mesh}. The Cahn–Hilliard equation is a nonlinear, fourth-order partial differential equation that models phase separation phenomena in binary mixtures and has become a standard benchmark in diffuse-interface modeling. Its intrinsic multiscale nature, combining sharp interfacial regions with smooth bulk phases, makes it particularly well-suited for assessing the robustness and accuracy of numerical discretizations on distorted meshes.

\begin{figure}[H]
    \centering
    \includegraphics[width=\textwidth]{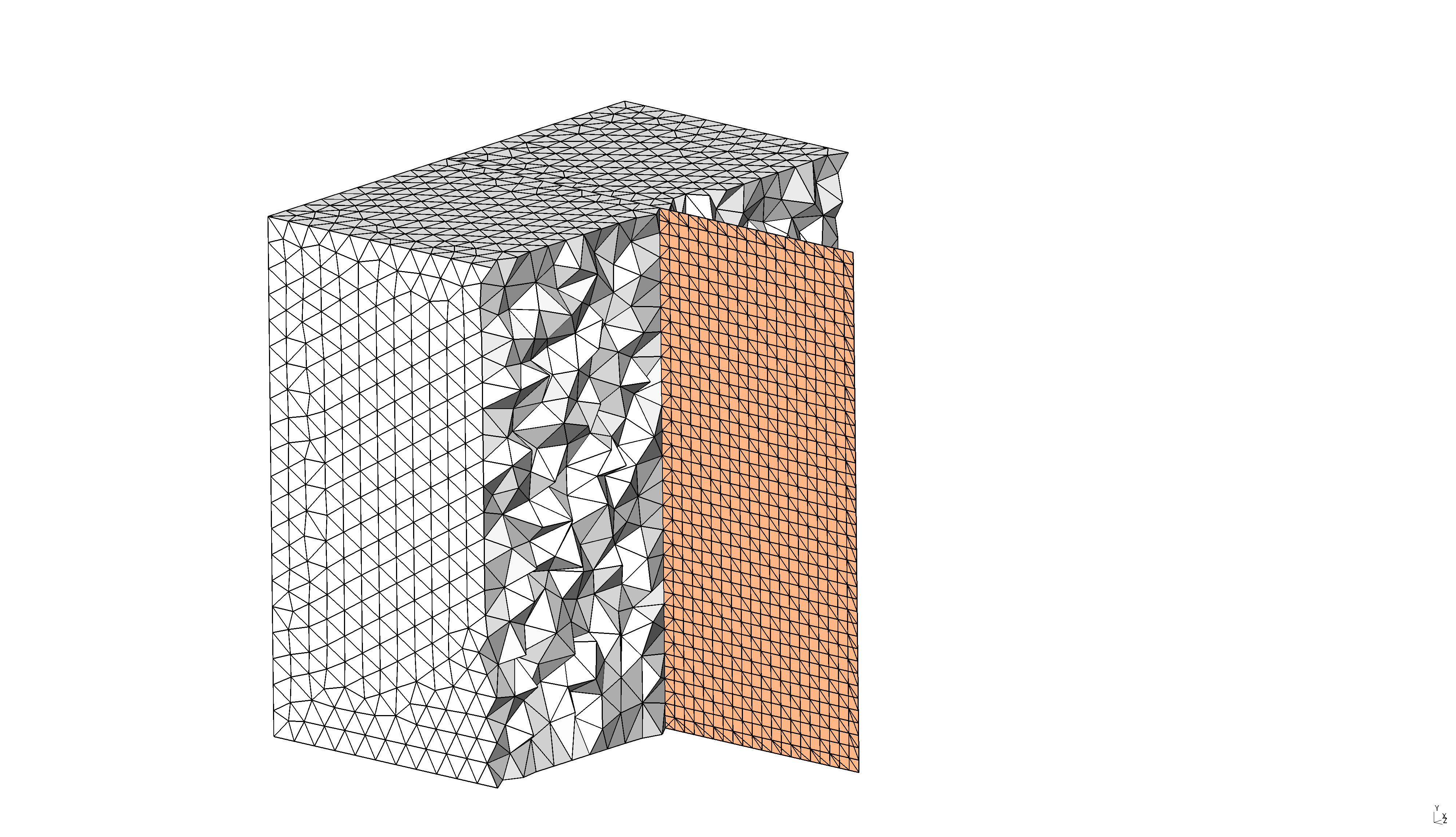}
    \caption{Mesh of a 3D cube with a sheet of slivers in the middle.}
    \label{fig:CH_wave_mesh}
\end{figure}

From a methodological standpoint, this problem provides a stringent test for the proposed TFEM approach. The presence of fourth-order derivatives (treated here via a mixed formulation) and the strong coupling between the concentration and chemical potential fields require an accurate representation of gradients and curvature. As such, any numerical locking induced by poorly shaped elements—such as slivers—can significantly degrade the solution quality. Demonstrating accurate and stable results for the Cahn–Hilliard system therefore supports the claim that TFEM remains reliable across a broad class of problems, including nonlinear, higher-order, and physically relevant applications.

The governing equations are written in mixed form as
\begin{align}
\frac{\partial c}{\partial t} &= \nabla \cdot (M \nabla \mu), \\
\mu &= \frac{\partial f}{\partial c} - \kappa \nabla^2 c,
\end{align}
where $c$ denotes the concentration field, $\mu$ the chemical potential, $M$ the mobility, $f$ the free-energy density, and $\kappa$ the gradient energy coefficient. In this work, we adopt a standard double-well potential of the form
\begin{equation}
f(c) = \frac{1}{4} c^2 (1 - c)^2.
\end{equation}
Time integration is performed using an implicit Euler scheme, while space is discretized using linear (P1) finite elements for both the concentration and the chemical potential. The resulting nonlinear system is solved using a Newton–Raphson method, and the linearized systems at each iteration are solved using a direct solver. The same mesh is used to compare the results obtained with traditional FEM and with TFEM, allowing us to isolate the effect of the slivers on the solution quality.\\

We initialize the concentration field with random values between 0 and 1, and we let the system evolve in time. Figure \ref{fig:cahn_hilliard_time} presents a series of snapshots of the Cahn–Hilliard simulation at successive times. The top row corresponds to the solution obtained using a standard finite element method (FEM), while the bottom row shows the result computed with the proposed TFEM approach on the same mesh, which contains a band of sliver elements. A clear qualitative difference is observed: in the classical FEM case, the solution is artificially constrained and exhibits a nearly linear variation across the sliver region, indicating numerical locking. In contrast, the TFEM solution preserves smooth variations and allows for curvature along the sheet of slivers, demonstrating its ability to accurately represent the underlying physics despite the presence of highly distorted elements.

\begin{figure}[H]
    \centering
    \begin{tikzpicture}
        \node[anchor=center] at (0,0) {
        \includegraphics[width=\textwidth,trim={200 375 200 525},clip]{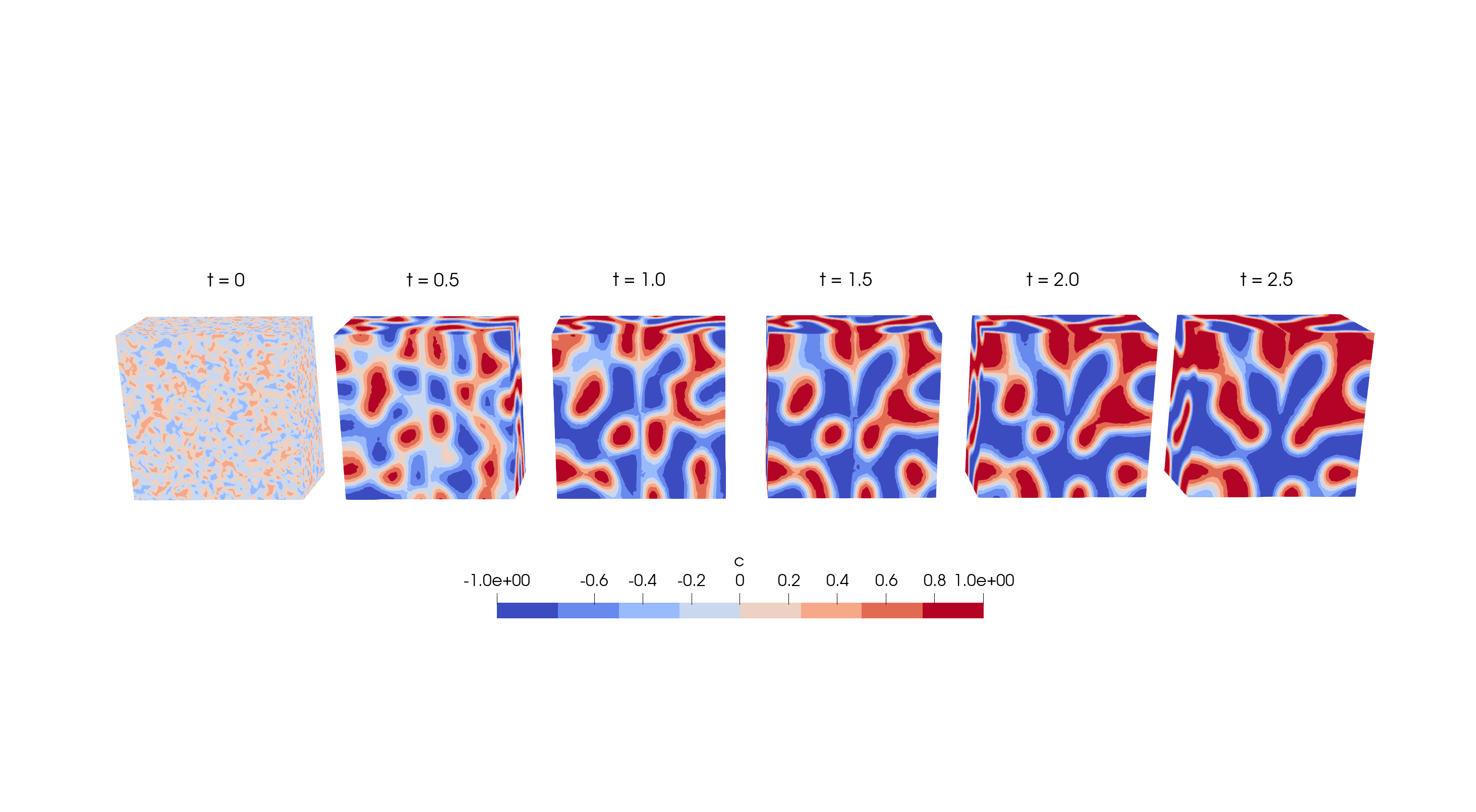}
        };
        \draw[fill=white, draw=none] (-6,1.3) rectangle (6,2);
       \draw[fill=white, draw=none] (-4,-2) rectangle (4,-1);
    \end{tikzpicture}
    \\ \vspace{-1cm}
    \begin{tikzpicture}
        \node[anchor=center] at (0,0) {
        \includegraphics[width=\textwidth,trim={200 375 200 525},clip]{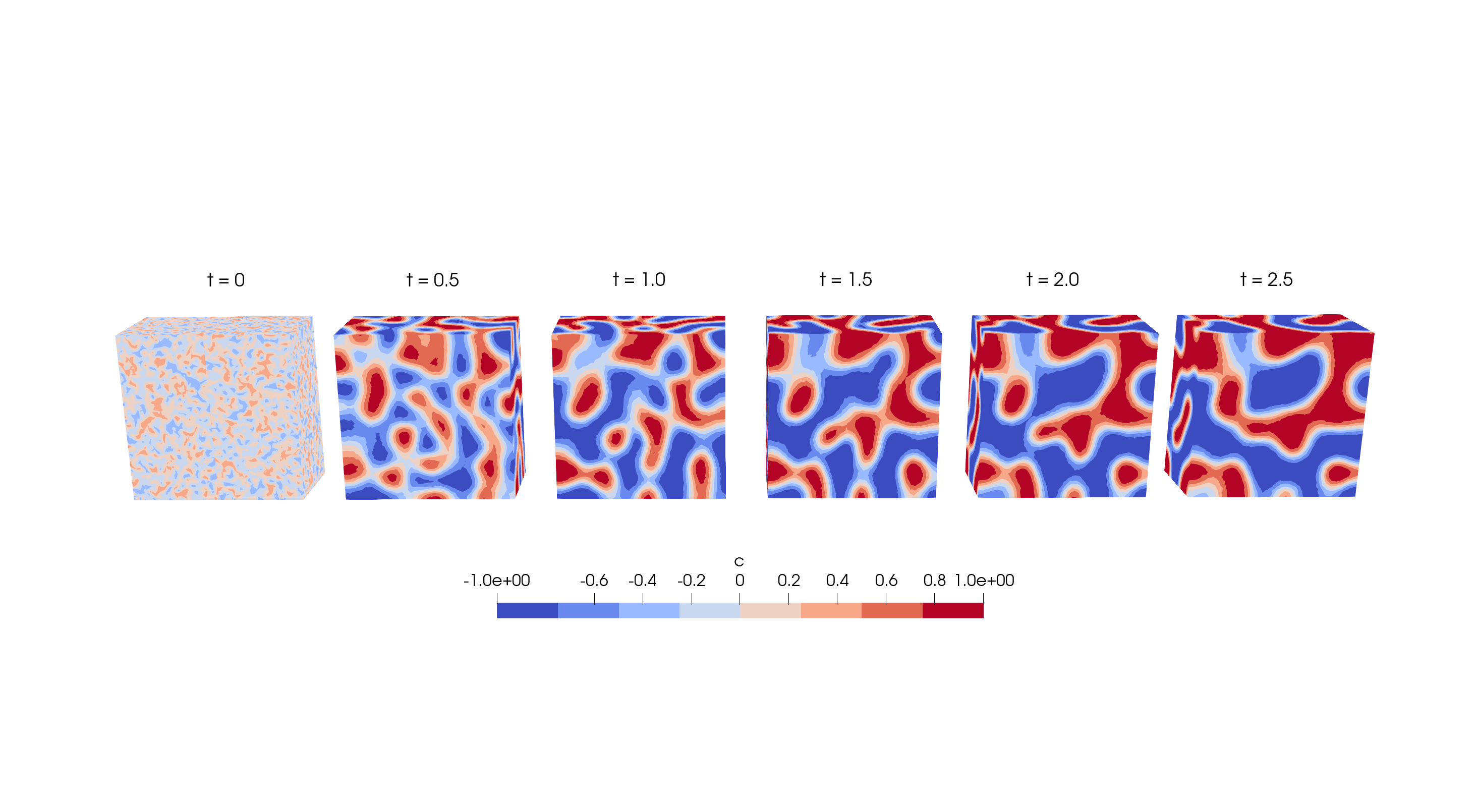}
        };
         \draw[fill=white, draw=none] (-6,1.3) rectangle (6,2);
         \draw[fill=white, draw=none] (-4,-2) rectangle (4,-1);
        \node[anchor=center] at (0,-1.2) {
         \includegraphics[width=0.5\textwidth]{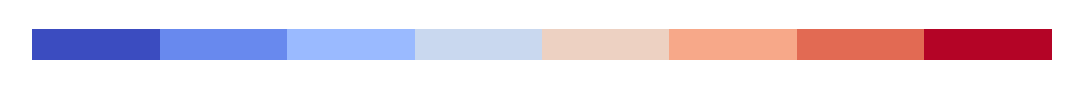}
        };
        \node[anchor=center] at (0,-1.7) {$c$};
        \node[anchor=center] at (-2.75,-1.7) {$-1$};
        \node[anchor=center] at (2.7,-1.7) {$1$};

        \node[anchor=center] at (-5,2.0) {$t=0$};
        \node[anchor=center] at (-3,2.0) {$t=0.5$};
        \node[anchor=center] at (-1,2.0) {$t=1.0$};
        \node[anchor=center] at (1,2.0) {$t=1.5$};
        \node[anchor=center] at (3,2.0) {$t=2.0$};
        \node[anchor=center] at (5,2.0) {$t=2.5$};
    \end{tikzpicture}
    \caption{Time evolution of the concentration field for the Cahn-Hilliard phase-field problem in a 3D domain with slivers. (top) locking of the solution in the sliver region with traditional FEM. (bottom) solution obtained with TFEM on the same mesh.}
    \label{fig:cahn_hilliard_time}
\end{figure}
Figure \ref{fig:cahn_hilliard_mesh_coupes} further highlights the locking phenomenon by showing planar cuts of the solution at time $t=2$, taken at $x=0.25$, $x=0.5$, and $x=0.75$. The middle slice ($x=0.5$) intersects the sheet of sliver elements, while the other two slices lie in regions of well-shaped elements. In the standard FEM solution (top row), a pronounced locking effect is clearly visible in the central cut, where the concentration field exhibits an artificial, nearly linear profile across the sliver region, in stark contrast with the surrounding patterns. This behavior is not observed in the slices away from the slivers, indicating that it is purely mesh-induced. By contrast, the TFEM solution (bottom row) remains smooth and physically consistent across all cut planes, including at $x=0.5$, where curvature and fine-scale structures are preserved despite the presence of highly distorted elements. These observations emphasize the ability of TFEM to eliminate locking and recover an accurate representation of the phase field even in the presence of slivers.
\begin{figure}[H]
    \centering
    \begin{tikzpicture}
        \node[anchor=center] at (0,0) {
        \includegraphics[width=0.8\textwidth]{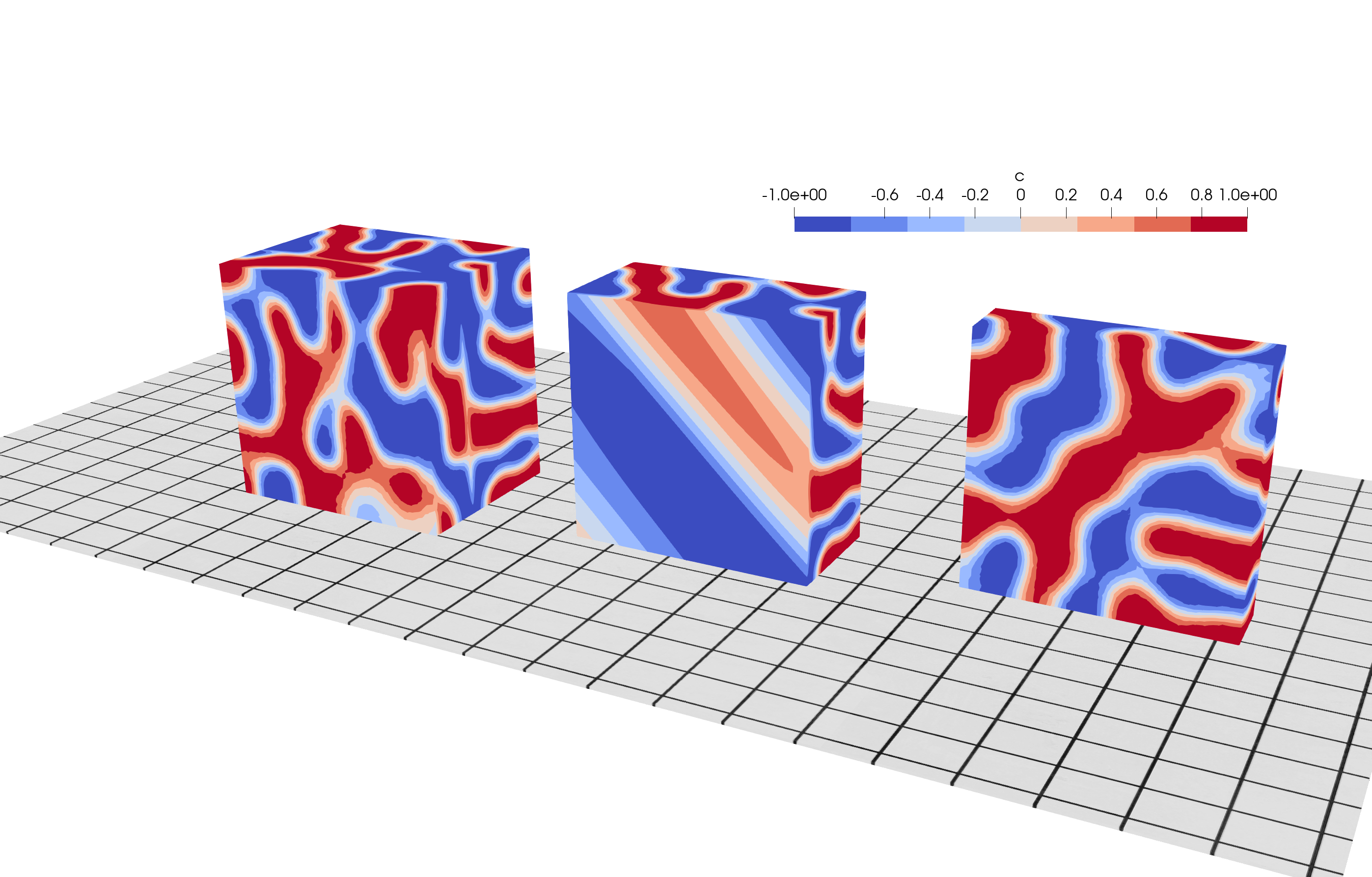}
        };
         \draw[fill=white, draw=none] (0.5,1.3) rectangle (5,2);
        \node[anchor=center] at (2.75,1.65) {
         \includegraphics[width=0.3\textwidth]{images/colorbar.png}
        };
        \node[anchor=center] at (2.75,2.0) {$c$};
        \node[anchor=center] at (1.,2.0) {$-1$};
        \node[anchor=center] at (4.25,2.0) {$1$};
    \end{tikzpicture}
    \\
    \begin{tikzpicture}
        \node[anchor=center] at (0,0) {
        \includegraphics[width=0.8\textwidth]{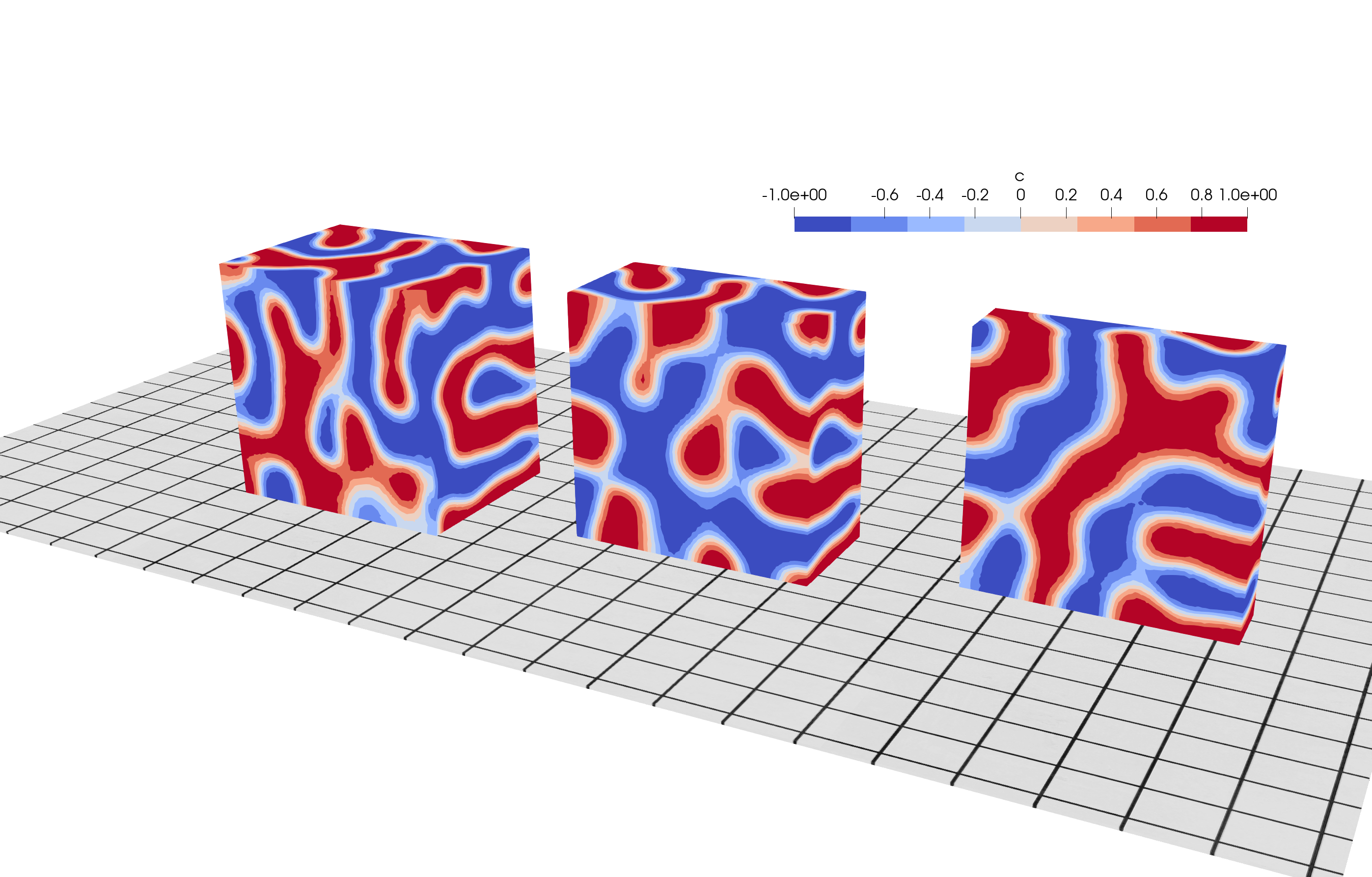}
        };
        \draw[fill=white, draw=none] (0.5,1.3) rectangle (5,2);
        \node[anchor=center] at (2.75,1.65) {
         \includegraphics[width=0.3\textwidth]{images/colorbar.png}
        };
        \node[anchor=center] at (2.75,2.0) {$c$};
        \node[anchor=center] at (1.,2.0) {$-1$};
        \node[anchor=center] at (4.25,2.0) {$1$};
    \end{tikzpicture}
    \caption{Cahn-Hilliard phase-field problem in a 3D domain with slivers. (top) locking of the solution in the sliver region with traditional FEM. (bottom) solution obtained with TFEM on the same mesh at time $t=2$.}
    \label{fig:cahn_hilliard_mesh_coupes}
\end{figure}

\newpage
\subsection{Wave equation}
Next, we consider the classical wave equation in a three-dimensional domain containing sliver elements, using the same mesh configuration as in the Cahn–Hilliard example (Fig.~\ref{fig:CH_wave_mesh}). The wave equation is a second-order hyperbolic partial differential equation that describes the propagation of disturbances in a medium and serves as a canonical model for transient dynamics. In contrast to the diffusive nature of the Cahn–Hilliard system, it involves the propagation of sharp features over time, making it a good complementary test for numerical methods.

From the perspective of the proposed TFEM framework, this problem is particularly relevant as it probes the ability of the method to accurately capture wave propagation in the presence of poorly shaped elements. In standard finite element discretizations, sliver elements can induce spurious stiffness and numerical locking, leading to artificial wave distortion, loss of amplitude, or incorrect propagation speeds. Demonstrating robust performance for the wave equation therefore provides further evidence that TFEM can handle a wide range of physical regimes, including dynamic, transport-dominated problems.

The governing equation is given by
\begin{equation}
\frac{\partial^2 u}{\partial t^2} = c^2 \nabla^2 u,
\end{equation}
where $u$ denotes the wave field and $c$ is the wave speed. For the purpose of time integration, the equation is recast as a first-order system by introducing the auxiliary velocity field $v$:
\begin{equation}
\begin{cases}
\frac{\partial u}{\partial t} = v, \\
\frac{\partial v}{\partial t} = c^2 \nabla^2 u.
\end{cases}
\end{equation}
Time discretization is performed using an implicit Euler scheme, while space is discretized using linear (P1) finite elements for both fields.\\
\\
Figure \ref{fig:wave_time} shows the time evolution of the wave field for an initial Gaussian profile centered in the domain. The top row corresponds to the reference solution obtained on a regular mesh, while the middle and bottom rows present results on the mesh containing a sheet of sliver elements, using standard FEM and TFEM, respectively. A clear degradation of the solution is observed in the classical FEM case: as the wave propagates, strong locking effects appear in the vicinity of the sliver region, leading to distorted wave fronts compared to the reference solution. In contrast, the TFEM results on the sliver mesh remain in excellent agreement with those obtained on the regular mesh. The two solutions are almost indistinguishable throughout the simulation. This comparison highlights the severe impact of sliver-induced locking in standard FEM for transient wave problems, and demonstrates the ability of TFEM to recover accurate wave propagation even on highly distorted meshes.

\begin{figure}[H]
    \centering
    \begin{tikzpicture}
        \node[anchor=center] at (0,0) {
        \includegraphics[width=\textwidth,trim={170 375 145 525},clip]{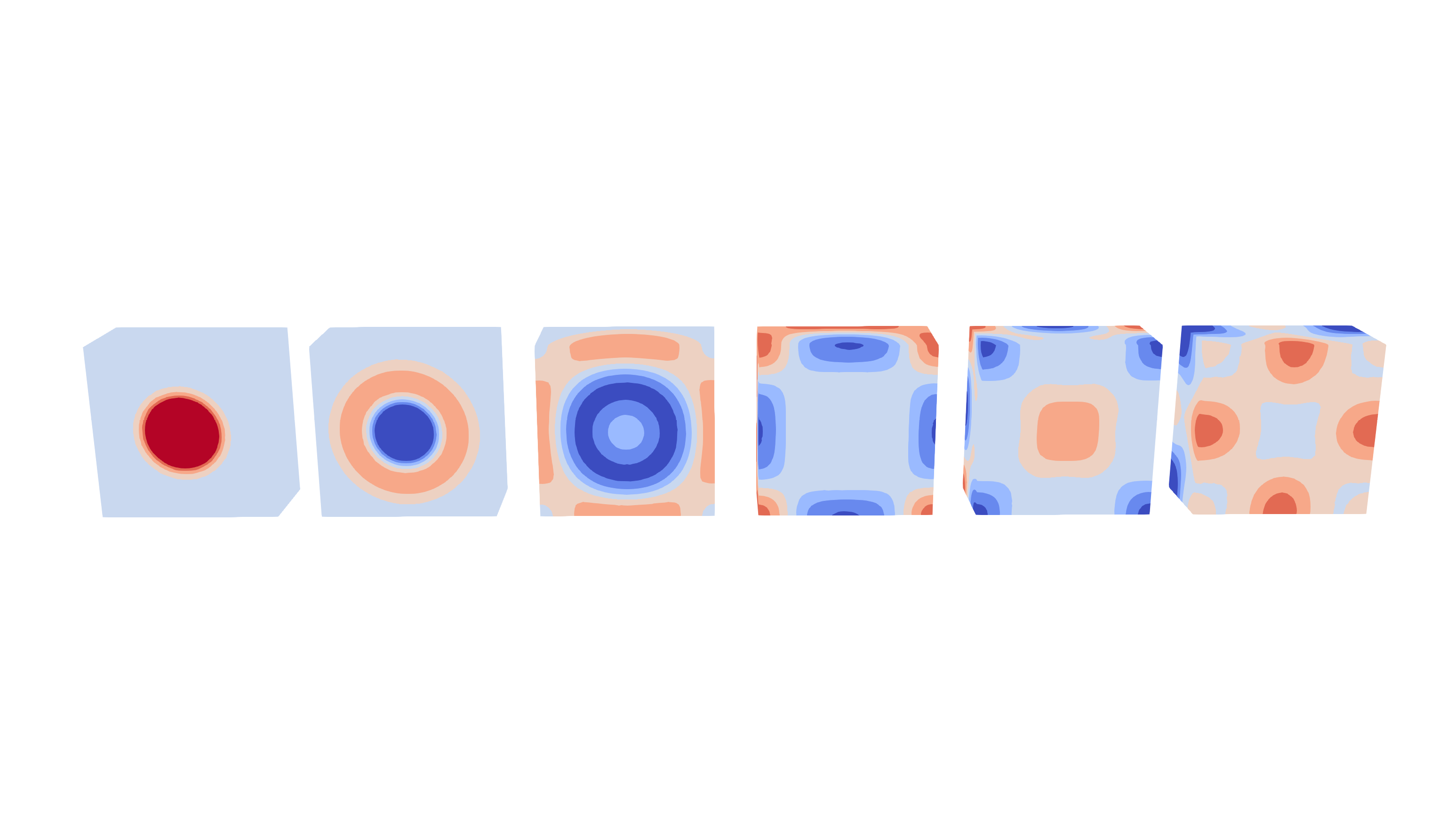}
        };
    \end{tikzpicture}
    \\ \vspace{-1cm}
    \begin{tikzpicture}
        \node[anchor=center] at (0,0) {
        \includegraphics[width=\textwidth,trim={170 375 145 525},clip]{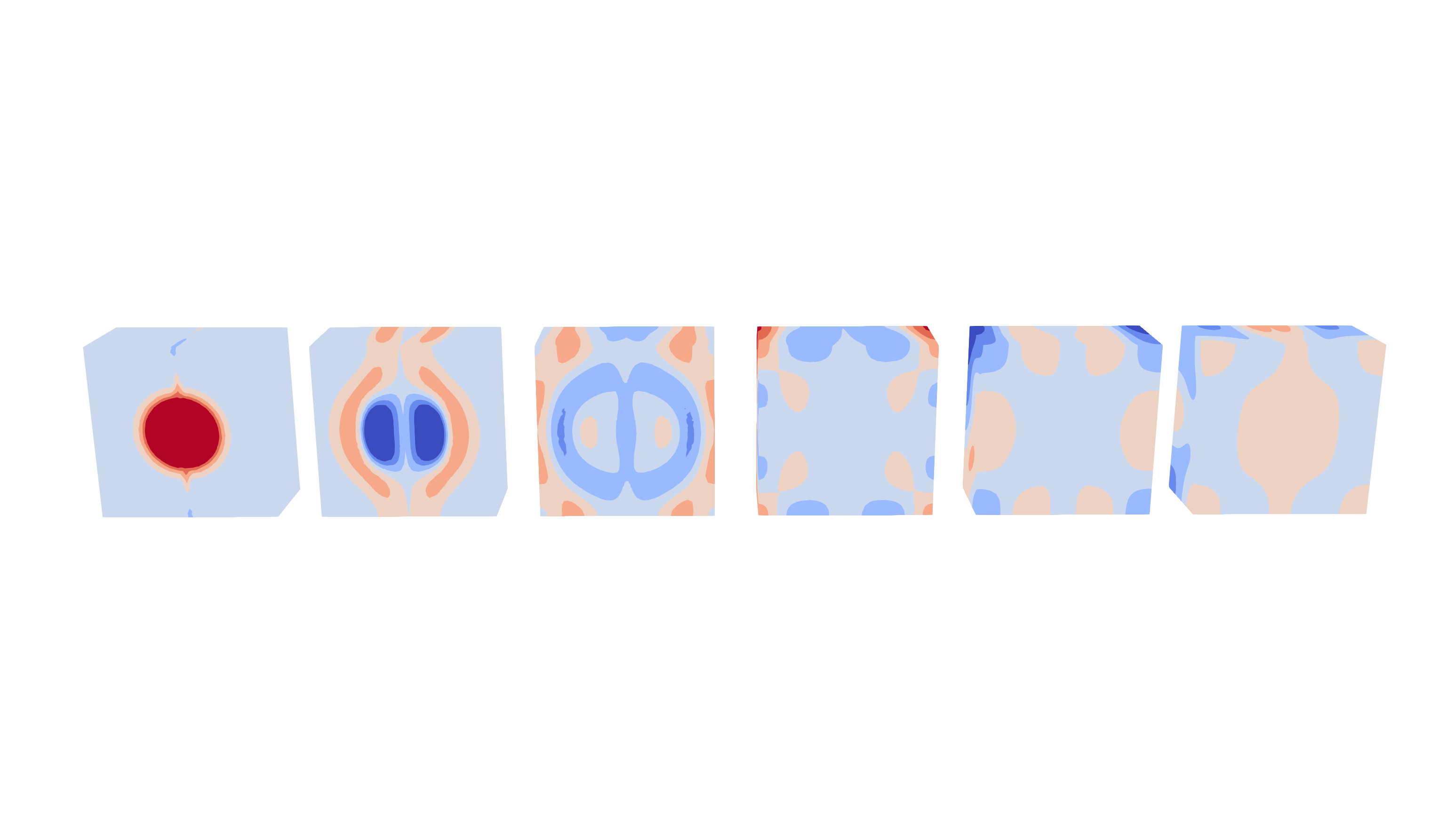}
        };

        \node[anchor=center] at (-5,2.0) {$t=0$};
        \node[anchor=center] at (-3,2.0) {$t=0.2$};
        \node[anchor=center] at (-1,2.0) {$t=0.4$};
        \node[anchor=center] at (1,2.0) {$t=0.6$};
        \node[anchor=center] at (3,2.0) {$t=0.8$};
        \node[anchor=center] at (5,2.0) {$t=1.0$};
    \end{tikzpicture}
    \\ \vspace{-1.2cm}
    \begin{tikzpicture}
        \node[anchor=center] at (0,0) {
        \includegraphics[width=\textwidth,trim={170 650 145 525},clip]{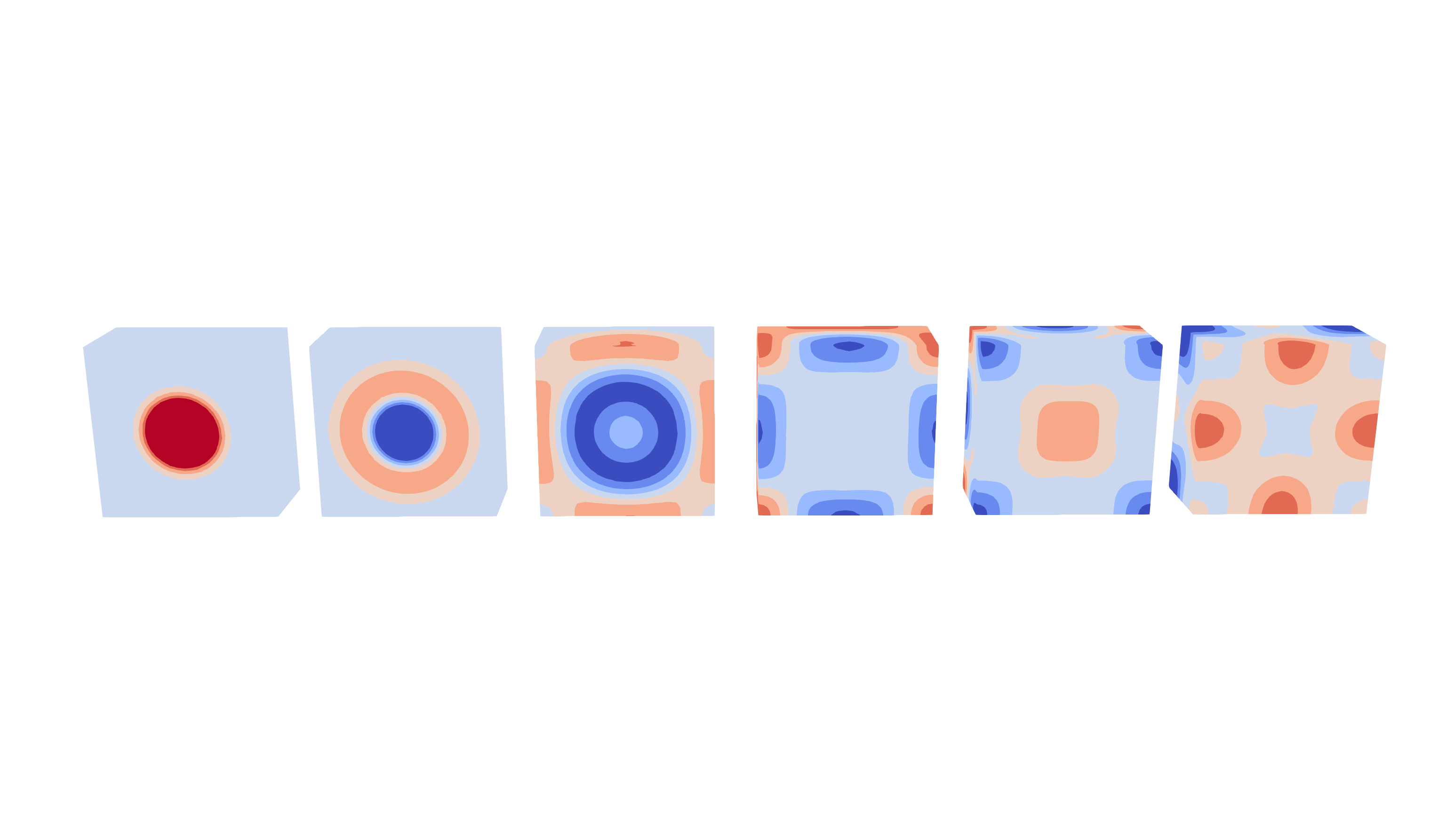}
        };
        \node[anchor=center] at (0,1.15) {
         \includegraphics[width=0.5\textwidth]{images/colorbar.png}
        };
        \node[anchor=center] at (0,1.5) {$u$};
        \node[anchor=center] at (-2.75,1.5) {$-0.1$};
        \node[anchor=center] at (2.7,1.5) {$0.15$};
    \end{tikzpicture}
    \caption{Time evolution of the wave field in a 3D domain on a regular mesh (top), on a mesh with slivers using traditional FEM (middle), and on the same mesh with TFEM (bottom).}
    \label{fig:wave_time}
\end{figure}
Figure \ref{fig:wave_slivers_cut_TFEM} provides further insight into the impact of sliver elements by showing planar cuts of the solution at time $t = 1$, taken at $x = 0.25$, $x = 0.5$, and $x = 0.75$. In contrast to the Cahn–Hilliard case, the degradation induced by standard FEM is no longer confined to the immediate vicinity of the sliver sheet. Instead, the perturbation propagates throughout the domain: even the slices at $x = 0.25$ and $x = 0.75$, which lie away from the sliver region, exhibit noticeable differences from the reference solution. The central slice at $x = 0.5$, intersecting the sheet of slivers, is particularly affected, with the solution exhibiting an almost linear profile characteristic of severe numerical locking. By contrast, the TFEM solution on the sliver mesh remains in excellent agreement with the reference solution across all cut planes.

\begin{figure}[H]
    \centering
    \begin{tikzpicture}
        \node[anchor=center] at (0,0) {
        \includegraphics[width=0.8\textwidth,trim={100 200 100 200},clip]{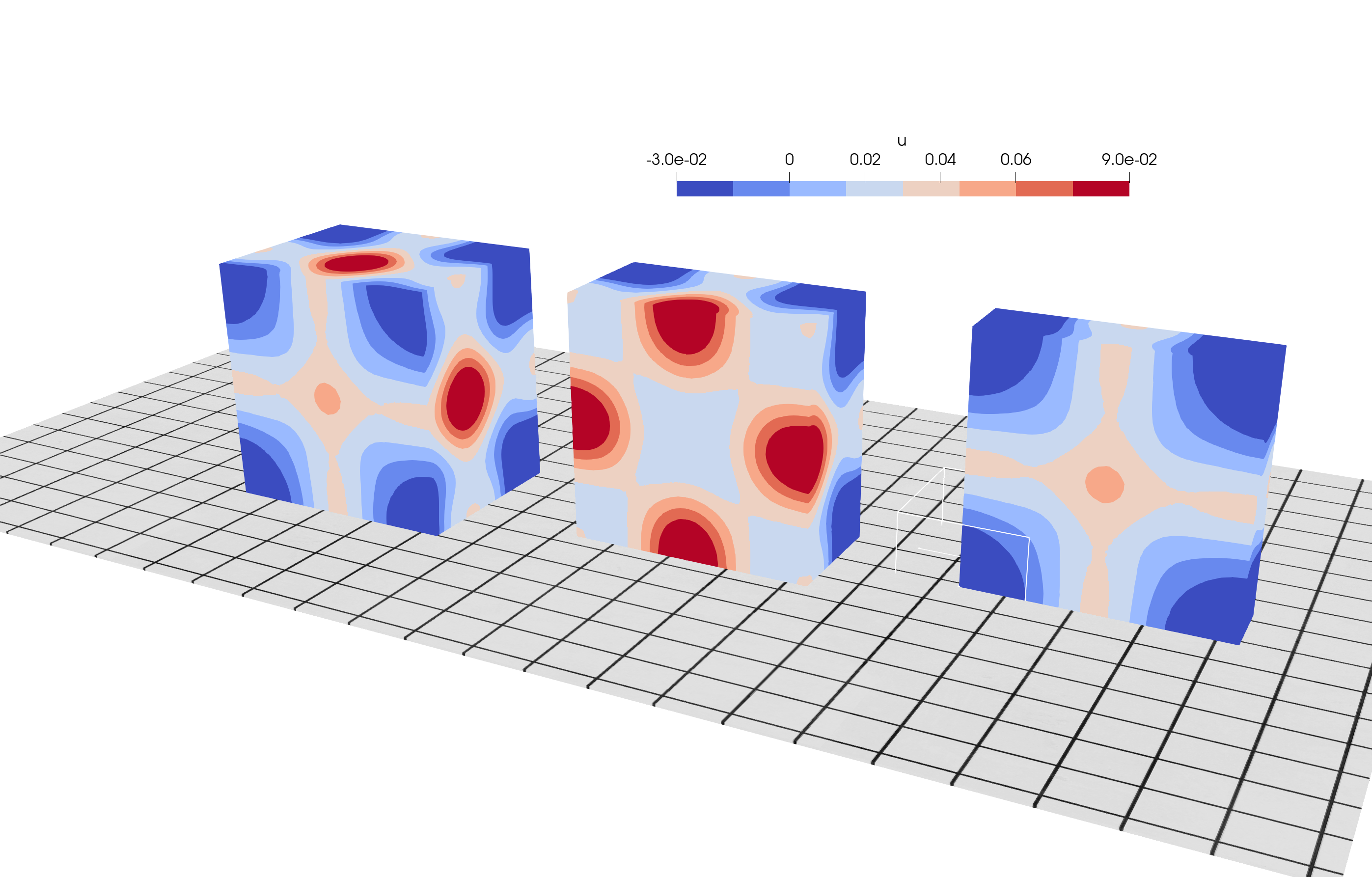}
        };
         \draw[fill=white, draw=none] (-0.5,1.3) rectangle (5,2.5);
        \node[anchor=center] at (2.75,1.65) {
         \includegraphics[width=0.3\textwidth]{images/colorbar.png}
        };
        \node[anchor=center] at (2.75,2.0) {$u$};
        \node[anchor=center] at (1.,2.0) {$-0.03$};
        \node[anchor=center] at (4.25,2.0) {$0.09$};
    \end{tikzpicture}
    \\
    \begin{tikzpicture}
        \node[anchor=center] at (0,0) {
        \includegraphics[width=0.8\textwidth,trim={100 200 100 200},clip]{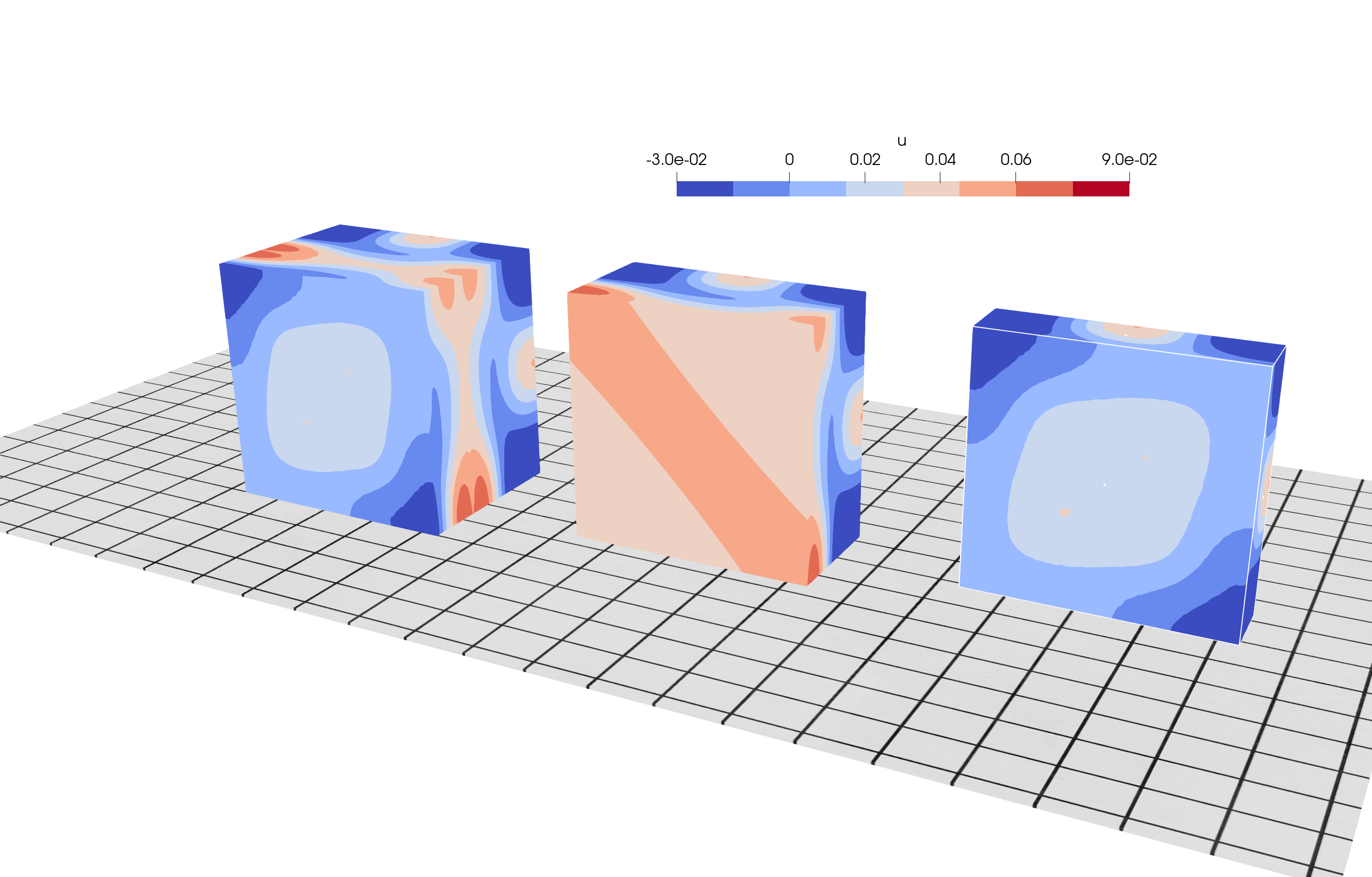}
        };
         \draw[fill=white, draw=none] (-0.5,1.3) rectangle (5,2.5);
        \node[anchor=center] at (2.75,1.65) {
         \includegraphics[width=0.3\textwidth]{images/colorbar.png}
        };
        \node[anchor=center] at (2.75,2.0) {$u$};
        \node[anchor=center] at (1.,2.0) {$-0.03$};
        \node[anchor=center] at (4.25,2.0) {$0.09$};
    \end{tikzpicture}
    \\
    \begin{tikzpicture}
        \node[anchor=center] at (0,0) {
        \includegraphics[width=0.8\textwidth,trim={100 200 100 200},clip]{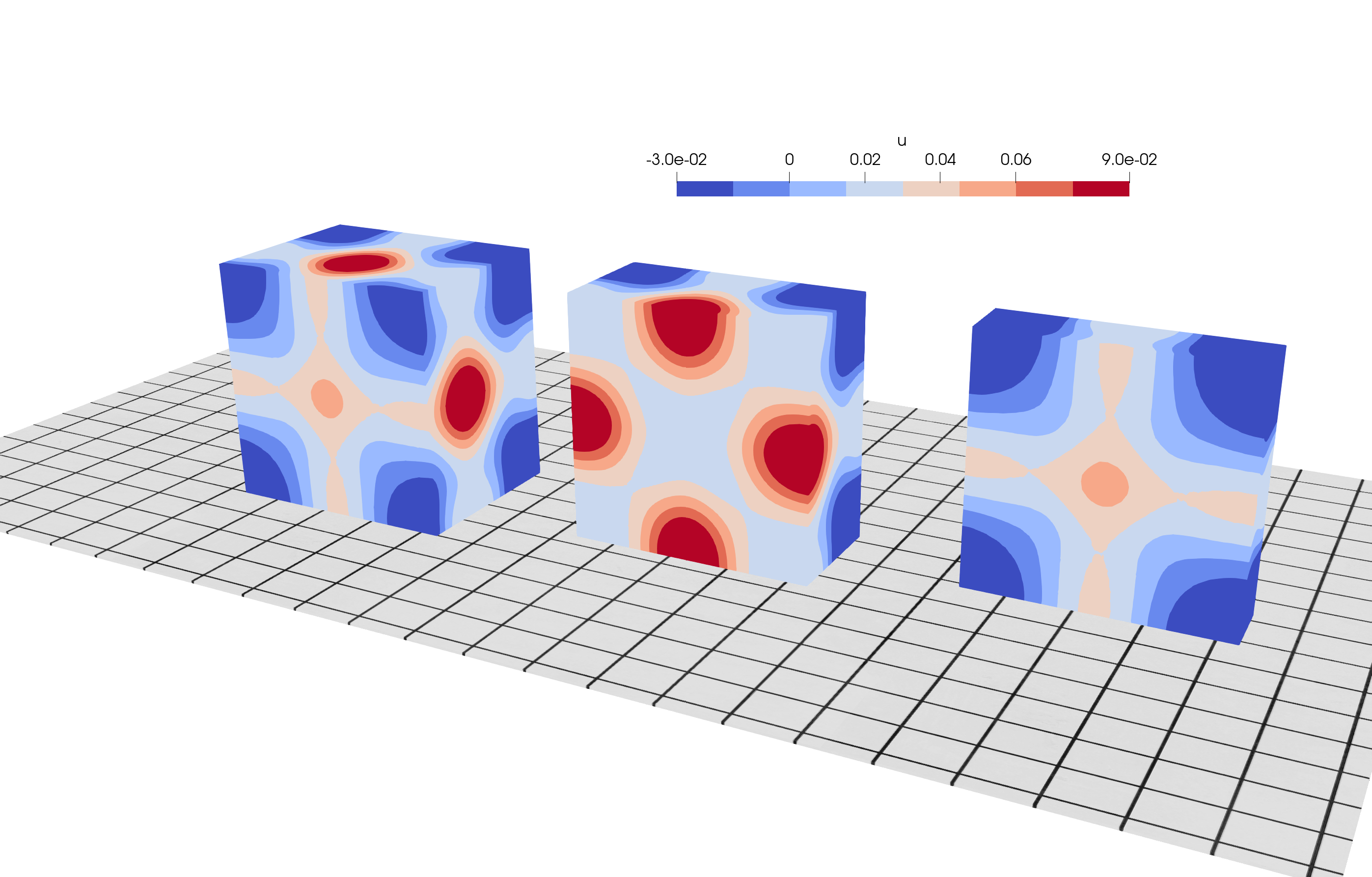}
        };
         \draw[fill=white, draw=none] (-0.5,1.3) rectangle (5,2.5);
        \node[anchor=center] at (2.75,1.65) {
         \includegraphics[width=0.3\textwidth]{images/colorbar.png}
        };
        \node[anchor=center] at (2.75,2.0) {$u$};
        \node[anchor=center] at (1.,2.0) {$-0.03$};
        \node[anchor=center] at (4.25,2.0) {$0.09$};
    \end{tikzpicture}
    \caption{Wave propagation in a 3D domain on a regular mesh (top), on a mesh with slivers using traditional FEM (middle), and on the same mesh with TFEM (bottom).}
    \label{fig:wave_slivers_cut_TFEM}
\end{figure}

\subsection{Vibro-acoustic harmonic study}
To further assess the robustness of the proposed TFEM formulation in a multiphysics context, we consider a vibro-acoustic problem involving fluid–structure interaction in the presence of sliver elements. This class of problems is particularly relevant as it combines different physical models across an interface, where accurate transmission of stresses and kinematics is critical. From a numerical standpoint, interfaces are known to be especially sensitive to mesh quality, and the presence of sliver elements can severely deteriorate the accuracy of standard finite element approximations. This makes vibro-acoustic coupling an ideal and stringent test case to demonstrate that TFEM remains reliable not only for single-physics problems, but also across coupled systems representative of practical engineering applications.

The computational domain consists of a $5\,\text{m} \times 1\,\text{m} \times 1\,\text{m}$ rectangular acoustic canal $\Omega_f$, in which waves propagate, and a linear elastic beam $\Omega_s$ of length $0.9\,\text{m}$, width $0.4\,\text{m}$, and thickness $0.1\,\text{m}$, located at the center of the canal, as shown in Fig.\ref{fig:vibro_mesh}. The mesh is deliberately constructed by inserting sheets of sliver elements along the fluid–structure interface $\Gamma_{fs} = \partial \Omega_s \cap \partial \Omega_f$, thereby introducing a challenging discretization in the region where coupling conditions must be enforced.

\begin{figure}[H]
    \centering
    \includegraphics[width=\textwidth]{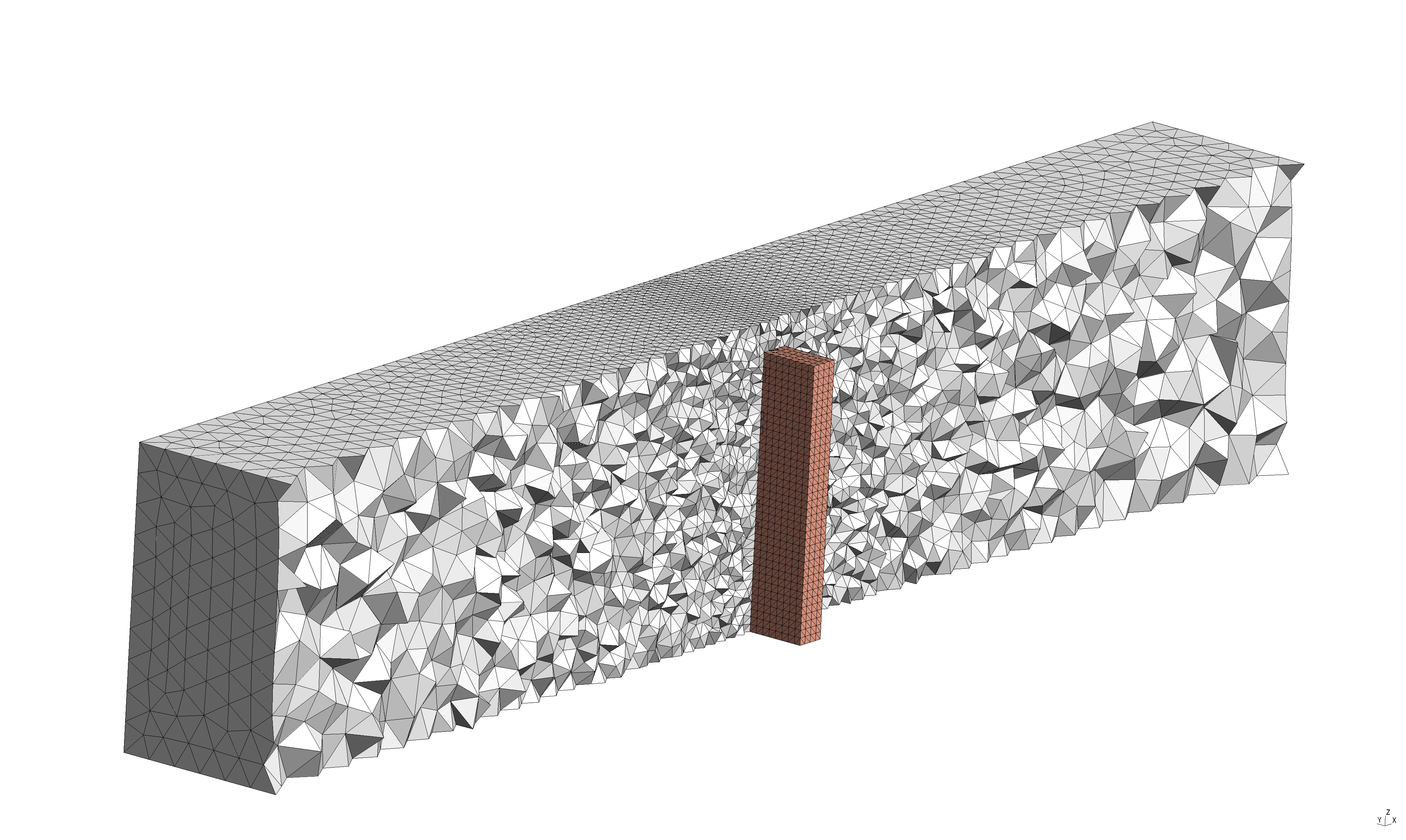}
    \caption{Mesh of the vibro-acoustic problem with sliver elements along the fluid–structure interface.}
    \label{fig:vibro_mesh}
\end{figure}

The unknowns of the problem are the structural displacement field $\mathbf{u}(\mathbf{x},t)$ in $\Omega_s$ and the acoustic pressure field $p(\mathbf{x},t)$ in $\Omega_f$. In the solid, the motion is governed by the linear elastodynamic equations
\begin{equation}
\rho_s \frac{\partial^2 \mathbf{u}}{\partial t^2} - \nabla \cdot \boldsymbol{\sigma}(\mathbf{u}) = \mathbf{0} \qquad \text{in } \Omega_s,
\end{equation}
where $\rho_s$ is the density of the solid and $\boldsymbol{\sigma}$ is the Cauchy stress tensor given by the linear isotropic constitutive law
\begin{equation}
\boldsymbol{\sigma}(\mathbf{u}) = \lambda \, (\nabla \cdot \mathbf{u}) \, \mathbf{I} + 2\mu \, \boldsymbol{\varepsilon}(\mathbf{u}),
\end{equation}
with the strain tensor
\begin{equation}
\boldsymbol{\varepsilon}(\mathbf{u}) = \frac{1}{2}\left( \nabla \mathbf{u} + \nabla \mathbf{u}^\top \right).
\end{equation}

In the fluid, assuming a compressible, inviscid medium under small perturbations, the acoustic pressure satisfies
\begin{equation}
\frac{1}{K_f} \frac{\partial^2 p}{\partial t^2} - \nabla^2 p = 0 \qquad \text{in } \Omega_f,
\end{equation}
where $K_f$ denotes the bulk modulus of the fluid.

At the fluid–structure interface $\Gamma_{fs}$, coupling conditions enforce both dynamic equilibrium and kinematic compatibility. The continuity of normal stresses yields
\begin{equation}
\boldsymbol{\sigma}(\mathbf{u}) \, \mathbf{n} = - p \, \mathbf{n} \qquad \text{on } \Gamma_{fs},
\end{equation}
while the continuity of normal accelerations reads
\begin{equation}
\frac{\partial^2 \mathbf{u}}{\partial t^2} \cdot \mathbf{n} = - \frac{1}{\rho_f} \nabla p \cdot \mathbf{n} \qquad \text{on } \Gamma_{fs}.
\end{equation}

We restrict the analysis to time-harmonic oscillations at a prescribed angular frequency $\omega$, assuming all fields vary as
\begin{equation}
\mathbf{u}(\mathbf{x},t) = \Re \left( \hat{\mathbf{u}}(\mathbf{x}) e^{i \omega t} \right),
\qquad
p(\mathbf{x},t) = \Re \left( \hat{p}(\mathbf{x}) e^{i \omega t} \right),
\end{equation}
which transforms time derivatives according to $\partial^2 / \partial t^2 \rightarrow -\omega^2$. The resulting frequency-domain problem is therefore
\begin{equation}
\begin{cases}

 \omega^2 \rho_s \, \hat{\mathbf{u}} - \nabla \cdot \boldsymbol{\sigma}(\hat{\mathbf{u}}) &= \mathbf{0} \qquad \text{in } \Omega_s, \\
 \nabla^2 \hat{p} - \dfrac{\omega^2 \rho_f}{K_f} \hat{p} &= 0 \qquad \text{in } \Omega_f.
  \end{cases}
  \end{equation}
  The corresponding frequency-domain coupling conditions on ($\Gamma_{fs}$) are
\begin{equation}
\begin{cases}
\boldsymbol{\sigma}(\hat{\mathbf{u}}) , \mathbf{n} = - \hat{p}  \mathbf{n},
& \qquad \text{on } \Gamma_{fs}, \\
\omega^2 \hat{\mathbf{u}} \cdot \mathbf{n}
= \dfrac{1}{\rho_f} \nabla \hat{p} \cdot \mathbf{n},
& \qquad \text{on } \Gamma_{fs}.
\end{cases}
\end{equation}
The system is excited by a prescribed harmonic pressure at the inlet and outlet of the canal, generating an acoustic wave that interacts with the elastic beam. Homogeneous Dirichlet boundary conditions are imposed on the base of the structure. The problem is discretized using linear (P1) finite elements for both the acoustic pressure and structural displacement, and solved in a fully monolithic manner, ensuring a consistent treatment of the coupling at the interface. The resulting linear system is solved using a direct solver.\\
\\
\begin{figure}[H]
    \centering
    \begin{tikzpicture}
        \node[anchor=center] at (0,0) {
        \includegraphics[width=0.8\textwidth,trim={600 100 680 230},clip]{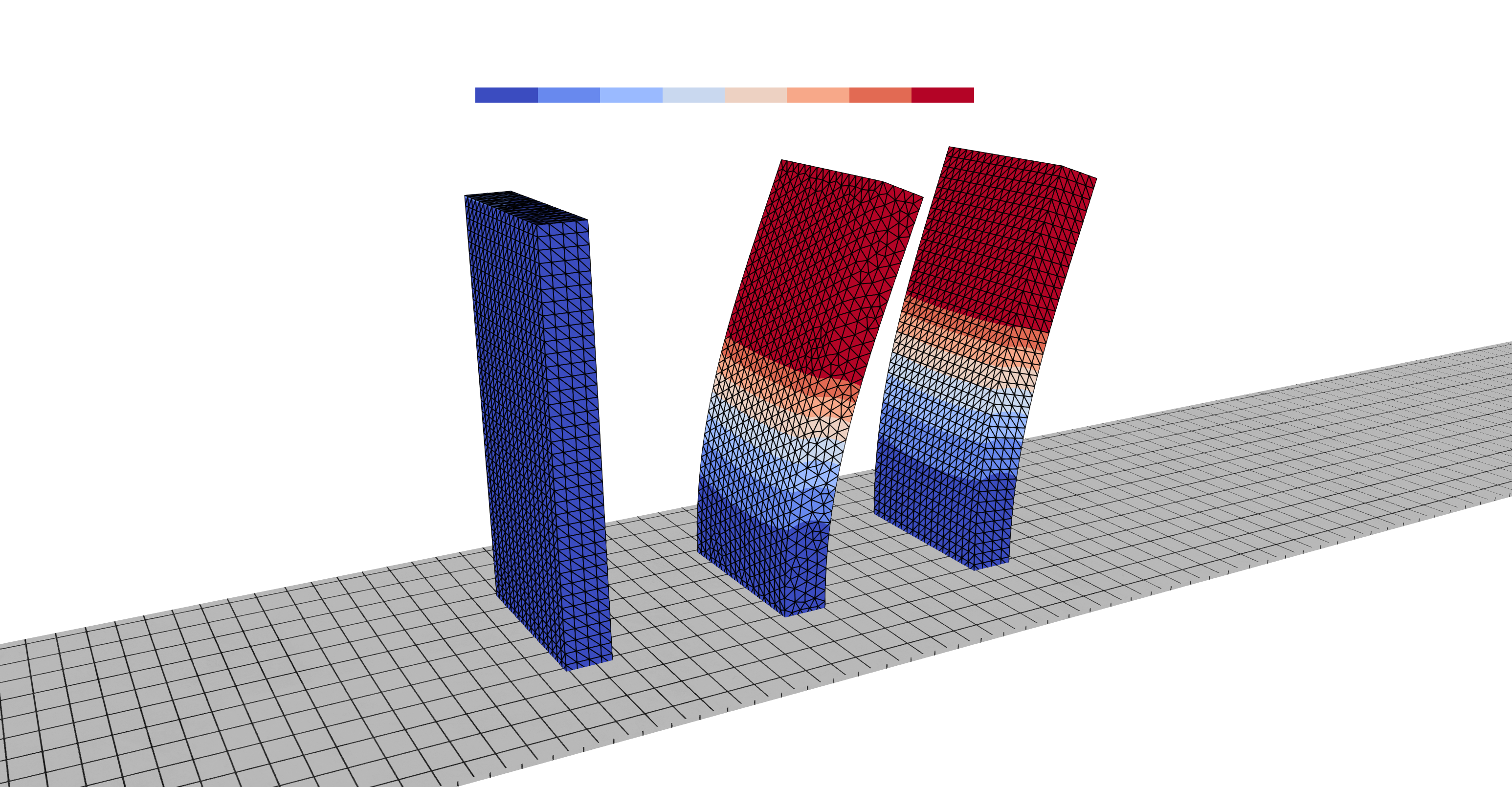}
        };
    \end{tikzpicture}
    \\ \vspace{0.5cm}
    \begin{tikzpicture}
        \node[anchor=center] at (0,0) {
        \includegraphics[width=0.8\textwidth,trim={600 100 680 230},clip]{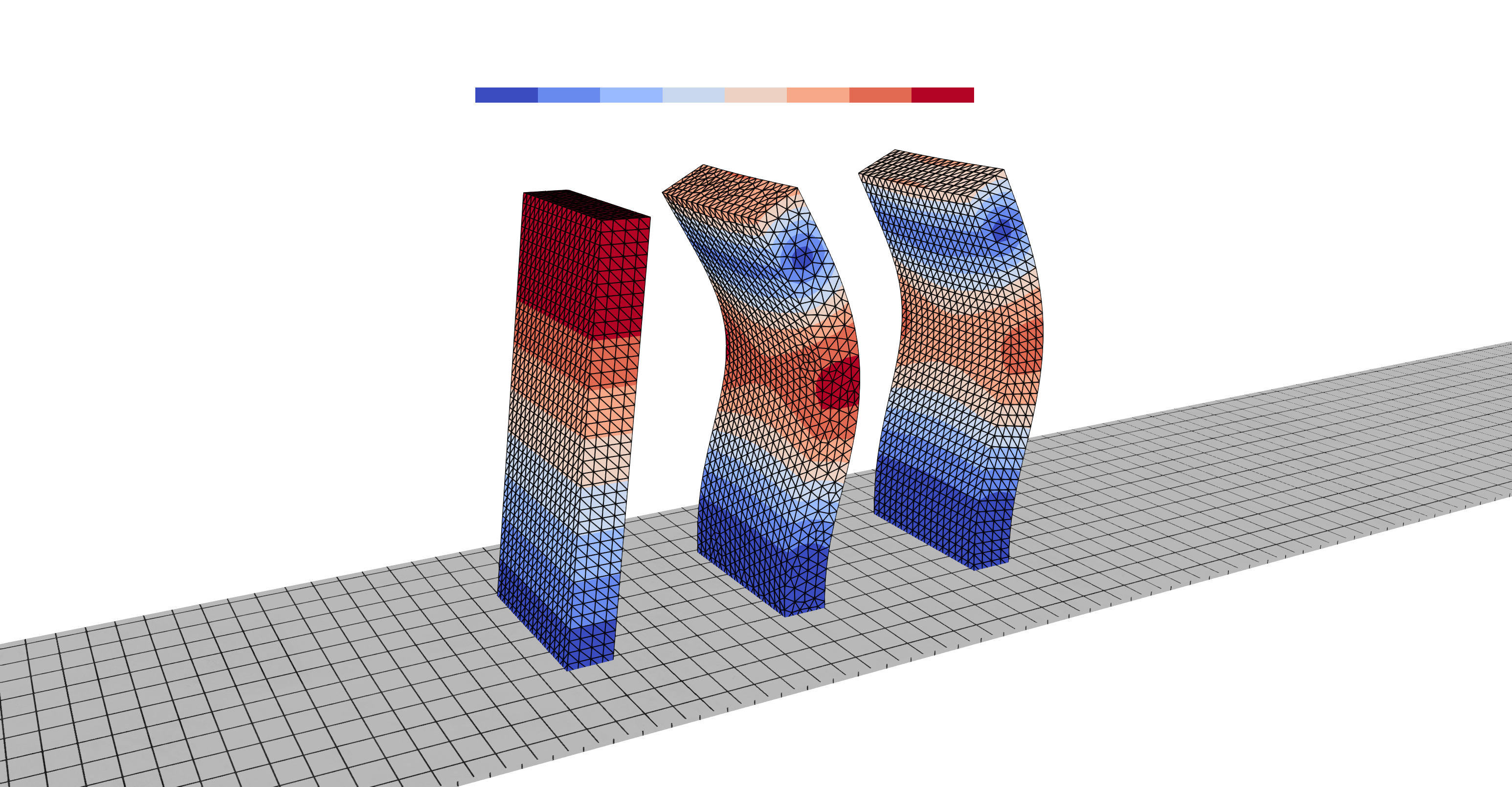}
        };
        \node[anchor=center] at (0,3.7) {
         \includegraphics[width=0.5\textwidth]{images/colorbar.png}
        };
        \node[anchor=center] at (0,4.05) {$\|\boldsymbol{u}\|$};
        \node[anchor=center] at (-2.75,4.05) {$0$};
        \node[anchor=center] at (2.7,4.05) {$10^{-8}$};
    \end{tikzpicture}
    \caption{Beam displacement field for the vibro-acoustic problem at 1000 Hz (top row) and 5000 Hz (bottom row). The columns show standard FEM on a mesh with slivers (left), standard FEM on a regular mesh (center), and TFEM on the same mesh with slivers (right).}
    \label{fig:vibro_comparison}
\end{figure}
Figure \ref{fig:vibro_comparison} shows the structural displacement field for two excitation frequencies, $\omega=1000$ Hz and $\omega=5000$ Hz, for three different configurations: a reference solution on a regular mesh (center), a mesh containing slivers discretized with standard FEM (left), and the same sliver mesh using TFEM (right). For visualization purposes, the deformation is amplified by a factor of $10^7$, as the problem is set in the small-deformation regime.\\
\\
A clear discrepancy is observed in the classical FEM solution on the sliver mesh, where the deformation pattern is significantly altered compared to the reference configuration. The structural response appears overly stiff and is forced to be linear. In contrast, the TFEM solution on the same mesh allows the structure to exhibit curvature and matches the reference solution obtained on the regular mesh for both frequencies. The deformation profiles, including both amplitude distribution and bending shape, are recovered, demonstrating that TFEM effectively preserves the coupled vibro-acoustic response despite the presence of slivers at the fluid–structure interface.\\
\\
To further quantify the impact of sliver-induced mesh distortion on the vibro-acoustic response, we perform a harmonic analysis based on two scalar indicators extracted from the structural displacement field. The first quantity is the mean displacement at the top of the beam $u_{\text{top}}$, which provides a global measure of the structural response amplitude under harmonic excitation. The second quantity is a curvature-based indicator $\kappa$ computed along a vertical line at the mid-span of the structure, which serves as a sensitive measure of the locking phenomenon induced by sliver elements. In the presence of locking, the solution is forced to be linear in the sliver region, which leads to a significant reduction in curvature and an artificial stiffening of the response. By contrast, a well-resolved solution should exhibit non-zero curvature along the beam, reflecting the physical bending behavior under harmonic loading.\\
\\
To compute this second metric, the displacement field is first interpolated along a vertical line located at the mid-span of the structure, i.e., at fixed coordinates $x = 2.5$ and $y = 0.5$, using a one-dimensional sampling of $u_x(2.5, 0.5, z)$. The resulting discrete profile is then used to approximate the second derivative of the $x$-displacement in the vertical direction via a standard finite-difference stencil, providing an estimate of the local curvature. A global curvature indicator is finally obtained by integrating the squared curvature along the line. \\
\\
This post-processing strategy allows us to systematically compare the reference solution, the standard FEM solution on sliver meshes, and the TFEM formulation in a unified manner across excitation frequencies. \\
\\
\input{vibro_acoustic_3D_graph}
Figure \ref{fig:vibro_graph} reports the frequency response of the system in terms of the two previously defined indicators. The top plot shows the evolution of the mean displacement at the top of the beam as a function of the excitation frequency, while the bottom plot presents the corresponding curvature indicator. In both cases, three configurations are compared: the reference solution obtained on a regular mesh, the standard FEM solution on a mesh containing slivers, and the TFEM solution on the same distorted mesh.\\
\\
A clear discrepancy is observed for the standard FEM discretization on the sliver mesh. The predicted response significantly deviates from the reference one over the entire frequency range. This behavior is consistent with the presence of locking at the fluid–structure interface, which artificially stiffens the structural response and alters the deformation patterns, ultimately leading to an incorrect estimation of both global and local quantities.\\
\\
In contrast, the TFEM results obtained on the sliver mesh follow the reference solution computed on the regular mesh. The overall trends, including the variations with frequency and the relative amplitude levels, are well captured for both the mean displacement and the curvature indicator. Although small discrepancies can still be observed, they remain limited and can be reasonably attributed to standard mesh-dependent effects rather than to pathological behavior induced by element distortion. These results confirm that TFEM effectively mitigates locking and provides a reliable prediction of both global and local response characteristics.

%% file: fluid_convergence.tex
\begin{figure}[!ht]
    \centering
    \begin{tikzpicture}
        \begin{axis}[
            xlabel={$h$},
            ylabel={$\| u_{h} - u_{\text{analytic}}\|_{L^2, \text{ out}}$},
            legend style={at={(0.65,0.4)},anchor=west},
            ymajorgrids=true,
            grid=none,
            ymode=log,
            xmode=log,
            xtick={-1, -2},
            xmin=1e-2,
            ymax=20,
            ymin=0.01,
            width=\textwidth,
            height=0.7\textwidth,
            y label style={at={(axis description cs:0.30,0.85)},rotate=-90,anchor=south},
            legend style={draw=none}
        ]
        %regular
        \addplot[
            color=blue,
            mark = *,
            mark size=1.5pt,
            line width=1pt
            ]
            coordinates {
                (0.1, 1.29952) (0.07071, 0.54604) (0.05, 0.29256) (0.03536, 0.141126) (0.025, 0.067251) (0.017676, 0.032479)
            };
        
        % monstre TFEM
        \addplot[
            color=orange,
            mark = triangle*,
            mark size=1.5pt,
            line width=1pt
            ]
            coordinates {
               (0.1, 1.596277) (0.07071, 0.77209) (0.05, 0.65507) (0.03536, 0.16122) (0.025, 0.07759) (0.017676, 0.0354567)
            };

        % monstre FEM
        \addplot[
            color=orange,
            mark = square*,
            mark size=1.5pt,
            line width=1pt
            ]
            coordinates {
                (0.1, 3.696452) (0.07071, 1.83864) (0.05, 1.81821) (0.03536, 0.63528) (0.025, 0.5707) (0.017676, 0.275354)
            };

        % plane TFEM
        \addplot[
            color=red,
            mark = triangle*,
            mark size=1.5pt,
            line width=1pt
            ]
            coordinates {
               (0.1, 1.258178) (0.07071, 0.573994) (0.05, 0.33170) (0.03536, 0.13917) (0.025, 0.072825) (0.017676, 0.03196)
            };

        % plane FEM
        \addplot[
            color=red,
            mark = square*,
            mark size=1.5pt,
            line width=1pt
            ]
            coordinates {
                (0.1, 9.71457) (0.07071, 8.30983) (0.05, 8.73795) (0.03536, 7.67285) (0.025, 6.346454) (0.017676, 2.533)
            };

        \addplot[color=black]
            coordinates{
                (0.1, 0.5) (0.035, 0.061256)
            };

        \node [orange, align=center] at (rel axis cs: 
            0.15, 0.47) { Sphere Slivers \\ FEM };

        \node [blue, align=center, rotate=23] at (rel axis cs: 
            0.25, 0.11) { Regular FEM };

        \node [red, align=center] at (rel axis cs: 
            0.15, 0.77) { Plane Slivers\\ FEM };

        \node [orange, align=center, rotate=23] at (rel axis cs: 
            0.2, 0.3) { Sphere Slivers TFEM };

        \node [red, align=center, rotate=23] at (rel axis cs: 
            0.22, 0.24) { Plane Slivers TFEM };

        \node [black, align=center, rotate=24] at (rel axis cs: 
            0.75, 0.35) { Slope 2 };

        \end{axis}
    \end{tikzpicture} 
    \caption{Convergence study of the 3D poiseuille problem in a pipe for FEM and TFEM for a mesh with a sphere of slivers, a mesh with a plane of slivers and a regular mesh.}
    \label{fig:NS_conv}
\end{figure}

%% file: vibro_acoustic_3D_graph.tex
\begin{figure}[!ht]
    \centering
    \begin{tikzpicture}
        \begin{axis}[
            xlabel={},
            ylabel={$u_{\text{top}}$},
            legend style={at={(0.65,0.4)},anchor=west},
            ymajorgrids=true,
            grid=none,
            ymode=log,
            width=\textwidth,
            height=0.6\textwidth,
            xmin=0, xmax=5000,
            xtick={-1000, 10000},
            y label style={at={(axis description cs:0.19,0.85)},rotate=-90,anchor=south},
            legend style={draw=none}
        ]
        %regular
        \addplot[
            color=blue,
            mark = triangle*,
            mark size=1pt
            ]
            coordinates {
               (50.000000000000, 0.000000004818) (100.000000000000, 0.000000004961) (150.000000000000, 0.000000005216) (200.000000000000, 0.000000005615) (250.000000000000, 0.000000006215) (300.000000000000, 0.000000007124) (350.000000000000, 0.000000008562) (400.000000000000, 0.000000011051) (450.000000000000, 0.000000016173) (500.000000000000, 0.000000032031) (550.000000000000, 0.000001100740) (600.000000000000, 0.000000029332) (650.000000000000, 0.000000014737) (700.000000000000, 0.000000009832) (750.000000000000, 0.000000007398) (800.000000000000, 0.000000005959) (850.000000000000, 0.000000005016) (900.000000000000, 0.000000004328) (950.000000000000, 0.000000004462) (1000.000000000000, 0.000000003681) (1050.000000000000, 0.000000003414) (1100.000000000000, 0.000000003233) (1150.000000000000, 0.000000003115) (1200.000000000000, 0.000000003048) (1250.000000000000, 0.000000003031) (1300.000000000000, 0.000000003063) (1350.000000000000, 0.000000003153) (1400.000000000000, 0.000000003314) (1450.000000000000, 0.000000003569) (1500.000000000000, 0.000000003957) (1550.000000000000, 0.000000004552) (1600.000000000000, 0.000000005508) (1650.000000000000, 0.000000007187) (1700.000000000000, 0.000000010701) (1750.000000000000, 0.000000022020) (1800.000000000000, 0.000000340912) (1850.000000000000, 0.000000019287) (1900.000000000000, 0.000000010015) (1950.000000000000, 0.000000006857) (2000.000000000000, 0.000000005289) (2050.000000000000, 0.000000004367) (2100.000000000000, 0.000000003776) (2150.000000000000, 0.000000003378) (2200.000000000000, 0.000000003949) (2250.000000000000, 0.000000002927) (2300.000000000000, 0.000000002786) (2350.000000000000, 0.000000002705) (2400.000000000000, 0.000000002670) (2450.000000000000, 0.000000002675) (2500.000000000000, 0.000000002718) (2550.000000000000, 0.000000002803) (2600.000000000000, 0.000000002932) (2650.000000000000, 0.000000003115) (2700.000000000000, 0.000000003364) (2750.000000000000, 0.000000003697) (2800.000000000000, 0.000000004135) (2850.000000000000, 0.000000004839) (2900.000000000000, 0.000000005671) (2950.000000000000, 0.000000006946) (3000.000000000000, 0.000000008936) (3050.000000000000, 0.000000012348) (3100.000000000000, 0.000000019238) (3150.000000000000, 0.000000039049) (3200.000000000000, 0.000000390328) (3250.000000000000, 0.000000060524) (3300.000000000000, 0.000000032192) (3350.000000000000, 0.000000024664) (3400.000000000000, 0.000000022459) (3450.000000000000, 0.000000023449) (3500.000000000000, 0.000000028873) (3550.000000000000, 0.000000049234) (3600.000000000000, 0.000002483190) (3650.000000000000, 0.000000036383) (3700.000000000000, 0.000000016444) (3750.000000000000, 0.000000009984) (3800.000000000000, 0.000000006888) (3850.000000000000, 0.000000005116) (3900.000000000000, 0.000000003992) (3950.000000000000, 0.000000003230) (4000.000000000000, 0.000000002687) (4050.000000000000, 0.000000002287) (4100.000000000000, 0.000000001984) (4150.000000000000, 0.000000001749) (4200.000000000000, 0.000000001564) (4250.000000000000, 0.000000001417) (4300.000000000000, 0.000000001299) (4350.000000000000, 0.000000001205) (4400.000000000000, 0.000000001129) (4450.000000000000, 0.000000001068) (4500.000000000000, 0.000000001021) (4550.000000000000, 0.000000000986) (4600.000000000000, 0.000000000964) (4650.000000000000, 0.000000000969) (4700.000000000000, 0.000000004960) (4750.000000000000, 0.000000003105) (4800.000000000000, 0.000000000946) (4850.000000000000, 0.000000000972) (4900.000000000000, 0.000000001030) (4950.000000000000, 0.000000001149) (5000.000000000000, 0.000000001922) (5050.000000000000, 0.000000000844) (5100.000000000000, 0.000000001125) (5150.000000000000, 0.000000001345) (5200.000000000000, 0.000000001635) (5250.000000000000, 0.000000002097) (5300.000000000000, 0.000000002982) (5350.000000000000, 0.000000005362) (5400.000000000000, 0.000000032653) (5450.000000000000, 0.000000007701) (5500.000000000000, 0.000000003406) (5550.000000000000, 0.000000002192) (5600.000000000000, 0.000000001627) (5650.000000000000, 0.000000001308) (5700.000000000000, 0.000000001114) (5750.000000000000, 0.000000000995) (5800.000000000000, 0.000000000938) (5850.000000000000, 0.000000000980) (5900.000000000000, 0.000000001550) (5950.000000000000, 0.000000000912) (6000.000000000000, 0.000000000403) (6050.000000000000, 0.000000000357) (6100.000000000000, 0.000000000376) (6150.000000000000, 0.000000000394) (6200.000000000000, 0.000000000414) (6250.000000000000, 0.000000000440) (6300.000000000000, 0.000000000480) (6350.000000000000, 0.000000000564) (6400.000000000000, 0.000000000714) (6450.000000000000, 0.000000001002) (6500.000000000000, 0.000000001710) (6550.000000000000, 0.000000004605) (6600.000000000000, 0.000000183745) (6650.000000000000, 0.000000042734) (6700.000000000000, 0.000000033366) (6750.000000000000, 0.000000002264) (6800.000000000000, 0.000000001543) (6850.000000000000, 0.000000001310) (6900.000000000000, 0.000000001470) (6950.000000000000, 0.000000001777) (7000.000000000000, 0.000000002383) (7050.000000000000, 0.000000003621) (7100.000000000000, 0.000000007067) (7150.000000000000, 0.000000046657) (7200.000000000000, 0.000000014025) (7250.000000000000, 0.000000012235) (7300.000000000000, 0.000000003742) (7350.000000000000, 0.000000002121) (7400.000000000000, 0.000000001981) (7450.000000000000, 0.000000001864) (7500.000000000000, 0.000000001784) (7550.000000000000, 0.000000001741) (7600.000000000000, 0.000000001744) (7650.000000000000, 0.000000001777) (7700.000000000000, 0.000000001845) (7750.000000000000, 0.000000001950) (7800.000000000000, 0.000000002098) (7850.000000000000, 0.000000002305) (7900.000000000000, 0.000000002592) (7950.000000000000, 0.000000003003) (8000.000000000000, 0.000000003592) (8050.000000000000, 0.000000004475) (8100.000000000000, 0.000000005810) (8150.000000000000, 0.000000008694) (8200.000000000000, 0.000000015301) (8250.000000000000, 0.000000051278) (8300.000000000000, 0.000000046888) (8350.000000000000, 0.000000017840) (8400.000000000000, 0.000000011964) (8450.000000000000, 0.000000009731) (8500.000000000000, 0.000000013935) (8550.000000000000, 0.000000009396) (8600.000000000000, 0.000000011764) (8650.000000000000, 0.000000030497) (8700.000000000000, 0.000000010158) (8750.000000000000, 0.000000001268) (8800.000000000000, 0.000000001187) (8850.000000000000, 0.000000002632) (8900.000000000000, 0.000000004183) (8950.000000000000, 0.000000006756) (9000.000000000000, 0.000000014170) (9050.000000000000, 0.000000323944) (9100.000000000000, 0.000000013107) (9150.000000000000, 0.000000006661) (9200.000000000000, 0.000000004450) (9250.000000000000, 0.000000003337) (9300.000000000000, 0.000000002669) (9350.000000000000, 0.000000002218) (9400.000000000000, 0.000000002561) (9450.000000000000, 0.000000001788) (9500.000000000000, 0.000000001621) (9550.000000000000, 0.000000001549) (9600.000000000000, 0.000000000131) (9650.000000000000, 0.000000001108) (9700.000000000000, 0.000000001134) (9750.000000000000, 0.000000001109) (9800.000000000000, 0.000000001102) (9850.000000000000, 0.000000001972) (9900.000000000000, 0.000000001265) (9950.000000000000, 0.000000001402) (10000.000000000000, 0.000000001730)
            };
        
        %TFEM
        \addplot[
            color=green!50!black,
            mark = *,
            mark size=1pt
            ]
            coordinates {
               (50.000000000000, 0.000000005882) (100.000000000000, 0.000000006068) (150.000000000000, 0.000000006401) (200.000000000000, 0.000000006927) (250.000000000000, 0.000000007729) (300.000000000000, 0.000000008974) (350.000000000000, 0.000000011016) (400.000000000000, 0.000000014780) (450.000000000000, 0.000000023597) (500.000000000000, 0.000000065521) (550.000000000000, 0.000000074969) (600.000000000000, 0.000000023229) (650.000000000000, 0.000000013610) (700.000000000000, 0.000000009595) (750.000000000000, 0.000000007414) (800.000000000000, 0.000000006056) (850.000000000000, 0.000000005135) (900.000000000000, 0.000000004446) (950.000000000000, 0.000000004716) (1000.000000000000, 0.000000003793) (1050.000000000000, 0.000000003500) (1100.000000000000, 0.000000003299) (1150.000000000000, 0.000000003159) (1200.000000000000, 0.000000003071) (1250.000000000000, 0.000000003029) (1300.000000000000, 0.000000003033) (1350.000000000000, 0.000000003092) (1400.000000000000, 0.000000003218) (1450.000000000000, 0.000000003434) (1500.000000000000, 0.000000003767) (1550.000000000000, 0.000000004282) (1600.000000000000, 0.000000005109) (1650.000000000000, 0.000000006561) (1700.000000000000, 0.000000009565) (1750.000000000000, 0.000000019230) (1800.000000000000, 0.000000342022) (1850.000000000000, 0.000000017708) (1900.000000000000, 0.000000009339) (1950.000000000000, 0.000000006510) (2000.000000000000, 0.000000005148) (2050.000000000000, 0.000000004517) (2100.000000000000, 0.000000006167) (2150.000000000000, 0.000000003477) (2200.000000000000, 0.000000003173) (2250.000000000000, 0.000000003002) (2300.000000000000, 0.000000002906) (2350.000000000000, 0.000000002871) (2400.000000000000, 0.000000002887) (2450.000000000000, 0.000000002951) (2500.000000000000, 0.000000003074) (2550.000000000000, 0.000000003262) (2600.000000000000, 0.000000003520) (2650.000000000000, 0.000000003878) (2700.000000000000, 0.000000004360) (2750.000000000000, 0.000000005025) (2800.000000000000, 0.000000005985) (2850.000000000000, 0.000000007381) (2900.000000000000, 0.000000009539) (2950.000000000000, 0.000000013350) (3000.000000000000, 0.000000021369) (3050.000000000000, 0.000000047464) (3100.000000000000, 0.000000583150) (3150.000000000000, 0.000000047558) (3200.000000000000, 0.000000027396) (3250.000000000000, 0.000000020902) (3300.000000000000, 0.000000018243) (3350.000000000000, 0.000000017466) (3400.000000000000, 0.000000018166) (3450.000000000000, 0.000000020775) (3500.000000000000, 0.000000027367) (3550.000000000000, 0.000000048981) (3600.000000000000, 0.000013093800) (3650.000000000000, 0.000000040462) (3700.000000000000, 0.000000018790) (3750.000000000000, 0.000000011729) (3800.000000000000, 0.000000008302) (3850.000000000000, 0.000000006315) (3900.000000000000, 0.000000005037) (3950.000000000000, 0.000000004159) (4000.000000000000, 0.000000003526) (4050.000000000000, 0.000000003055) (4100.000000000000, 0.000000002694) (4150.000000000000, 0.000000002413) (4200.000000000000, 0.000000002190) (4250.000000000000, 0.000000002012) (4300.000000000000, 0.000000001869) (4350.000000000000, 0.000000001754) (4400.000000000000, 0.000000001662) (4450.000000000000, 0.000000001589) (4500.000000000000, 0.000000001534) (4550.000000000000, 0.000000001493) (4600.000000000000, 0.000000001467) (4650.000000000000, 0.000000001611) (4700.000000000000, 0.000000008641) (4750.000000000000, 0.000000005149) (4800.000000000000, 0.000000001523) (4850.000000000000, 0.000000001564) (4900.000000000000, 0.000000001649) (4950.000000000000, 0.000000001803) (5000.000000000000, 0.000000002570) (5050.000000000000, 0.000000001657) (5100.000000000000, 0.000000002023) (5150.000000000000, 0.000000002381) (5200.000000000000, 0.000000002881) (5250.000000000000, 0.000000003688) (5300.000000000000, 0.000000005229) (5350.000000000000, 0.000000009327) (5400.000000000000, 0.000000051196) (5450.000000000000, 0.000000014058) (5500.000000000000, 0.000000006081) (5550.000000000000, 0.000000003860) (5600.000000000000, 0.000000002825) (5650.000000000000, 0.000000002234) (5700.000000000000, 0.000000001863) (5750.000000000000, 0.000000001631) (5800.000000000000, 0.000000001489) (5850.000000000000, 0.000000001455) (5900.000000000000, 0.000000001866) (5950.000000000000, 0.000000001207) (6000.000000000000, 0.000000000852) (6050.000000000000, 0.000000000782) (6100.000000000000, 0.000000000786) (6150.000000000000, 0.000000000869) (6200.000000000000, 0.000000001037) (6250.000000000000, 0.000000001399) (6300.000000000000, 0.000000002501) (6350.000000000000, 0.000000024493) (6400.000000000000, 0.000000004735) (6450.000000000000, 0.000000003688) (6500.000000000000, 0.000000003630) (6550.000000000000, 0.000000005533) (6600.000000000000, 0.000000176699) (6650.000000000000, 0.000000013010) (6700.000000000000, 0.000000001086) (6750.000000000000, 0.000000001198) (6800.000000000000, 0.000000001345) (6850.000000000000, 0.000000001550) (6900.000000000000, 0.000000001856) (6950.000000000000, 0.000000002362) (7000.000000000000, 0.000000003215) (7050.000000000000, 0.000000004918) (7100.000000000000, 0.000000009720) (7150.000000000000, 0.000000056896) (7200.000000000000, 0.000000021040) (7250.000000000000, 0.000000016221) (7300.000000000000, 0.000000007931) (7350.000000000000, 0.000000004500) (7400.000000000000, 0.000000003893) (7450.000000000000, 0.000000003759) (7500.000000000000, 0.000000003771) (7550.000000000000, 0.000000003899) (7600.000000000000, 0.000000004161) (7650.000000000000, 0.000000004598) (7700.000000000000, 0.000000005292) (7750.000000000000, 0.000000006418) (7800.000000000000, 0.000000008406) (7850.000000000000, 0.000000012587) (7900.000000000000, 0.000000026075) (7950.000000000000, 0.000000302838) (8000.000000000000, 0.000000022461) (8050.000000000000, 0.000000011857) (8100.000000000000, 0.000000008179) (8150.000000000000, 0.000000006415) (8200.000000000000, 0.000000005374) (8250.000000000000, 0.000000004733) (8300.000000000000, 0.000000004332) (8350.000000000000, 0.000000004099) (8400.000000000000, 0.000000003995) (8450.000000000000, 0.000000003973) (8500.000000000000, 0.000000042649) (8550.000000000000, 0.000000005506) (8600.000000000000, 0.000000007241) (8650.000000000000, 0.000000018347) (8700.000000000000, 0.000000008008) (8750.000000000000, 0.000000001164) (8800.000000000000, 0.000000001365) (8850.000000000000, 0.000000002351) (8900.000000000000, 0.000000003585) (8950.000000000000, 0.000000005745) (9000.000000000000, 0.000000011832) (9050.000000000000, 0.000000787014) (9100.000000000000, 0.000000012657) (9150.000000000000, 0.000000006351) (9200.000000000000, 0.000000004270) (9250.000000000000, 0.000000003239) (9300.000000000000, 0.000000002633) (9350.000000000000, 0.000000002256) (9400.000000000000, 0.000000002616) (9450.000000000000, 0.000000001793) (9500.000000000000, 0.000000001664) (9550.000000000000, 0.000000001681) (9600.000000000000, 0.000000000391) (9650.000000000000, 0.000000001105) (9700.000000000000, 0.000000001152) (9750.000000000000, 0.000000001170) (9800.000000000000, 0.000000001205) (9850.000000000000, 0.000000003201) (9900.000000000000, 0.000000001326) (9950.000000000000, 0.000000001482) (10000.000000000000, 0.000000001835)
            };

        %slivers
        \addplot[
            color=orange,
            mark = square*,
            mark size=1pt
            ]
            coordinates {
                (50.000000000000, 0.000000000147) (100.000000000000, 0.000000000148) (150.000000000000, 0.000000000149) (200.000000000000, 0.000000000151) (250.000000000000, 0.000000000153) (300.000000000000, 0.000000000156) (350.000000000000, 0.000000000159) (400.000000000000, 0.000000000164) (450.000000000000, 0.000000000169) (500.000000000000, 0.000000000175) (550.000000000000, 0.000000000182) (600.000000000000, 0.000000000191) (650.000000000000, 0.000000000202) (700.000000000000, 0.000000000215) (750.000000000000, 0.000000000233) (800.000000000000, 0.000000000260) (850.000000000000, 0.000000000306) (900.000000000000, 0.000000000447) (950.000000000000, 0.000000001178) (1000.000000000000, 0.000000000415) (1050.000000000000, 0.000000000358) (1100.000000000000, 0.000000000351) (1150.000000000000, 0.000000000363) (1200.000000000000, 0.000000000385) (1250.000000000000, 0.000000000417) (1300.000000000000, 0.000000000459) (1350.000000000000, 0.000000000515) (1400.000000000000, 0.000000000587) (1450.000000000000, 0.000000000682) (1500.000000000000, 0.000000000816) (1550.000000000000, 0.000000001008) (1600.000000000000, 0.000000001302) (1650.000000000000, 0.000000001801) (1700.000000000000, 0.000000002823) (1750.000000000000, 0.000000006036) (1800.000000000000, 0.000000111538) (1850.000000000000, 0.000000005909) (1900.000000000000, 0.000000003175) (1950.000000000000, 0.000000002242) (2000.000000000000, 0.000000001778) (2050.000000000000, 0.000000001505) (2100.000000000000, 0.000000001330) (2150.000000000000, 0.000000001212) (2200.000000000000, 0.000000001130) (2250.000000000000, 0.000000001074) (2300.000000000000, 0.000000001037) (2350.000000000000, 0.000000001014) (2400.000000000000, 0.000000001005) (2450.000000000000, 0.000000001006) (2500.000000000000, 0.000000001019) (2550.000000000000, 0.000000001042) (2600.000000000000, 0.000000001077) (2650.000000000000, 0.000000001127) (2700.000000000000, 0.000000001198) (2750.000000000000, 0.000000001315) (2800.000000000000, 0.000000001735) (2850.000000000000, 0.000000001705) (2900.000000000000, 0.000000001584) (2950.000000000000, 0.000000001668) (3000.000000000000, 0.000000001816) (3050.000000000000, 0.000000002026) (3100.000000000000, 0.000000002300) (3150.000000000000, 0.000000002655) (3200.000000000000, 0.000000003123) (3250.000000000000, 0.000000003758) (3300.000000000000, 0.000000004650) (3350.000000000000, 0.000000005965) (3400.000000000000, 0.000000008038) (3450.000000000000, 0.000000011660) (3500.000000000000, 0.000000019233) (3550.000000000000, 0.000000042854) (3600.000000000000, 0.000014258900) (3650.000000000000, 0.000000055105) (3700.000000000000, 0.000000032370) (3750.000000000000, 0.000000026058) (3800.000000000000, 0.000000024550) (3850.000000000000, 0.000000026203) (3900.000000000000, 0.000000032327) (3950.000000000000, 0.000000051639) (4000.000000000000, 0.000000260971) (4050.000000000000, 0.000000065607) (4100.000000000000, 0.000000026744) (4150.000000000000, 0.000000016099) (4200.000000000000, 0.000000011252) (4250.000000000000, 0.000000008539) (4300.000000000000, 0.000000006838) (4350.000000000000, 0.000000005693) (4400.000000000000, 0.000000004887) (4450.000000000000, 0.000000004298) (4500.000000000000, 0.000000003859) (4550.000000000000, 0.000000003527) (4600.000000000000, 0.000000003296) (4650.000000000000, 0.000000003206) (4700.000000000000, 0.000000006091) (4750.000000000000, 0.000000003834) (4800.000000000000, 0.000000002934) (4850.000000000000, 0.000000002888) (4900.000000000000, 0.000000002892) (4950.000000000000, 0.000000002939) (5000.000000000000, 0.000000002895) (5050.000000000000, 0.000000003377) (5100.000000000000, 0.000000003639) (5150.000000000000, 0.000000004083) (5200.000000000000, 0.000000004789) (5250.000000000000, 0.000000005982) (5300.000000000000, 0.000000008311) (5350.000000000000, 0.000000014553) (5400.000000000000, 0.000000077937) (5450.000000000000, 0.000000021825) (5500.000000000000, 0.000000009780) (5550.000000000000, 0.000000008731) (5600.000000000000, 0.000000004220) (5650.000000000000, 0.000000003158) (5700.000000000000, 0.000000002529) (5750.000000000000, 0.000000002100) (5800.000000000000, 0.000000001780) (5850.000000000000, 0.000000001505) (5900.000000000000, 0.000000001081) (5950.000000000000, 0.000000001767) (6000.000000000000, 0.000000001370) (6050.000000000000, 0.000000001230) (6100.000000000000, 0.000000001139) (6150.000000000000, 0.000000001075) (6200.000000000000, 0.000000001028) (6250.000000000000, 0.000000000995) (6300.000000000000, 0.000000000975) (6350.000000000000, 0.000000000966) (6400.000000000000, 0.000000000971) (6450.000000000000, 0.000000000996) (6500.000000000000, 0.000000001058) (6550.000000000000, 0.000000001261) (6600.000000000000, 0.000000014446) (6650.000000000000, 0.000000001274) (6700.000000000000, 0.000000001146) (6750.000000000000, 0.000000001172) (6800.000000000000, 0.000000001227) (6850.000000000000, 0.000000001316) (6900.000000000000, 0.000000001447) (6950.000000000000, 0.000000001641) (7000.000000000000, 0.000000001939) (7050.000000000000, 0.000000002448) (7100.000000000000, 0.000000003549) (7150.000000000000, 0.000000011006) (7200.000000000000, 0.000000001817) (7250.000000000000, 0.000000012939) (7300.000000000000, 0.000000011597) (7350.000000000000, 0.000000004382) (7400.000000000000, 0.000000002746) (7450.000000000000, 0.000000002011) (7500.000000000000, 0.000000001594) (7550.000000000000, 0.000000001326) (7600.000000000000, 0.000000001141) (7650.000000000000, 0.000000001009) (7700.000000000000, 0.000000000909) (7750.000000000000, 0.000000000833) (7800.000000000000, 0.000000000774) (7850.000000000000, 0.000000000729) (7900.000000000000, 0.000000000694) (7950.000000000000, 0.000000000668) (8000.000000000000, 0.000000000651) (8050.000000000000, 0.000000000656) (8100.000000000000, 0.000000000753) (8150.000000000000, 0.000000000596) (8200.000000000000, 0.000000000613) (8250.000000000000, 0.000000000630) (8300.000000000000, 0.000000000656) (8350.000000000000, 0.000000000700) (8400.000000000000, 0.000000000782) (8450.000000000000, 0.000000001001) (8500.000000000000, 0.000000043917) (8550.000000000000, 0.000000001110) (8600.000000000000, 0.000000000917) (8650.000000000000, 0.000000000767) (8700.000000000000, 0.000000001168) (8750.000000000000, 0.000000001183) (8800.000000000000, 0.000000001328) (8850.000000000000, 0.000000001584) (8900.000000000000, 0.000000002035) (8950.000000000000, 0.000000002966) (9000.000000000000, 0.000000005778) (9050.000000000000, 0.000000372441) (9100.000000000000, 0.000000005884) (9150.000000000000, 0.000000002918) (9200.000000000000, 0.000000001947) (9250.000000000000, 0.000000001466) (9300.000000000000, 0.000000001181) (9350.000000000000, 0.000000001055) (9400.000000000000, 0.000000002344) (9450.000000000000, 0.000000000996) (9500.000000000000, 0.000000000866) (9550.000000000000, 0.000000000882) (9600.000000000000, 0.000000000131) (9650.000000000000, 0.000000000547) (9700.000000000000, 0.000000000577) (9750.000000000000, 0.000000000503) (9800.000000000000, 0.000000000525) (9850.000000000000, 0.000000002026) (9900.000000000000, 0.000000000734) (9950.000000000000, 0.000000000777) (10000.000000000000, 0.000000000908)
            };

        \node [orange, align=center] at (rel axis cs: 
            0.33, 0.11) { Slivers \\ FEM };

        \node [blue, align=center] at (rel axis cs: 
            0.16, 0.75) { Regular\\ FEM };

        \node [green!50!black, align=center] at (rel axis cs: 
            0.58, 0.77) { Slivers\\ TFEM };

        \end{axis}
    \end{tikzpicture} \\ \vspace{0.5cm}
    \begin{tikzpicture}
        \begin{axis}[
            xlabel={$\omega$},
            ylabel={$\kappa \cdot 10^6$},
            legend style={at={(0.65,0.4)},anchor=west},
            ymajorgrids=true,
            grid=none,
            ymode=log,
            xmin=0, xmax=5000,
            ymin=1e-13, ymax=5e-1,
            xtick={1000, 2000, 3000, 4000},
            width=\textwidth,
            height=0.6\textwidth,
            x label style={at={(axis description cs:0.5,0.0)},anchor=north},
            y label style={at={(axis description cs:0.2,0.85)},rotate=-90,anchor=south},
            legend style={draw=none}
        ]
        %regular
        \addplot[
            color=blue,
            mark = triangle*,
            mark size=1pt
            ]
            coordinates {
               (50.000000000000, 0.000000005943) (100.000000000000, 0.000000006308) (150.000000000000, 0.000000006988) (200.000000000000, 0.000000008121) (250.000000000000, 0.000000009988) (300.000000000000, 0.000000013187) (350.000000000000, 0.000000019167) (400.000000000000, 0.000000032168) (450.000000000000, 0.000000069527) (500.000000000000, 0.000000275690) (550.000000000000, 0.000329794000) (600.000000000000, 0.000000237782) (650.000000000000, 0.000000061116) (700.000000000000, 0.000000027786) (750.000000000000, 0.000000016130) (800.000000000000, 0.000000010786) (850.000000000000, 0.000000007941) (900.000000000000, 0.000000006326) (950.000000000000, 0.000000005522) (1000.000000000000, 0.000000004542) (1050.000000000000, 0.000000004150) (1100.000000000000, 0.000000003941) (1150.000000000000, 0.000000003883) (1200.000000000000, 0.000000003964) (1250.000000000000, 0.000000004195) (1300.000000000000, 0.000000004605) (1350.000000000000, 0.000000005257) (1400.000000000000, 0.000000006263) (1450.000000000000, 0.000000007831) (1500.000000000000, 0.000000010372) (1550.000000000000, 0.000000014768) (1600.000000000000, 0.000000023186) (1650.000000000000, 0.000000042152) (1700.000000000000, 0.000000099272) (1750.000000000000, 0.000000439017) (1800.000000000000, 0.000113426000) (1850.000000000000, 0.000000385903) (1900.000000000000, 0.000000109906) (1950.000000000000, 0.000000054189) (2000.000000000000, 0.000000033867) (2050.000000000000, 0.000000024259) (2100.000000000000, 0.000000019029) (2150.000000000000, 0.000000015949) (2200.000000000000, 0.000000018028) (2250.000000000000, 0.000000013076) (2300.000000000000, 0.000000012503) (2350.000000000000, 0.000000012374) (2400.000000000000, 0.000000012629) (2450.000000000000, 0.000000013268) (2500.000000000000, 0.000000014337) (2550.000000000000, 0.000000015931) (2600.000000000000, 0.000000018217) (2650.000000000000, 0.000000021462) (2700.000000000000, 0.000000026105) (2750.000000000000, 0.000000032882) (2800.000000000000, 0.000000043013) (2850.000000000000, 0.000000059682) (2900.000000000000, 0.000000086507) (2950.000000000000, 0.000000135055) (3000.000000000000, 0.000000232318) (3050.000000000000, 0.000000460621) (3100.000000000000, 0.000001160450) (3150.000000000000, 0.000004958880) (3200.000000000000, 0.000513649000) (3250.000000000000, 0.000012797000) (3300.000000000000, 0.000003749960) (3350.000000000000, 0.000002278970) (3400.000000000000, 0.000001955770) (3450.000000000000, 0.000002205770) (3500.000000000000, 0.000003459050) (3550.000000000000, 0.000010399400) (3600.000000000000, 0.027344200000) (3650.000000000000, 0.000006066150) (3700.000000000000, 0.000001280270) (3750.000000000000, 0.000000487546) (3800.000000000000, 0.000000239625) (3850.000000000000, 0.000000136511) (3900.000000000000, 0.000000085837) (3950.000000000000, 0.000000058008) (4000.000000000000, 0.000000041454) (4050.000000000000, 0.000000030995) (4100.000000000000, 0.000000024071) (4150.000000000000, 0.000000019313) (4200.000000000000, 0.000000015947) (4250.000000000000, 0.000000013513) (4300.000000000000, 0.000000011724) (4350.000000000000, 0.000000010398) (4400.000000000000, 0.000000009415) (4450.000000000000, 0.000000008696) (4500.000000000000, 0.000000008190) (4550.000000000000, 0.000000007861) (4600.000000000000, 0.000000007691) (4650.000000000000, 0.000000007667) (4700.000000000000, 0.000000009471) (4750.000000000000, 0.000000012275) (4800.000000000000, 0.000000008711) (4850.000000000000, 0.000000009612) (4900.000000000000, 0.000000011073) (4950.000000000000, 0.000000013946) (5000.000000000000, 0.000000035136) (5050.000000000000, 0.000000010028) (5100.000000000000, 0.000000016792) (5150.000000000000, 0.000000024441) (5200.000000000000, 0.000000037256) (5250.000000000000, 0.000000063499) (5300.000000000000, 0.000000133268) (5350.000000000000, 0.000000447879) (5400.000000000000, 0.000017117600) (5450.000000000000, 0.000000988074) (5500.000000000000, 0.000000200398) (5550.000000000000, 0.000000085249) (5600.000000000000, 0.000000048111) (5650.000000000000, 0.000000031759) (5700.000000000000, 0.000000023328) (5750.000000000000, 0.000000018706) (5800.000000000000, 0.000000016472) (5850.000000000000, 0.000000016977) (5900.000000000000, 0.000000033251) (5950.000000000000, 0.000000000906) (6000.000000000000, 0.000000002810) (6050.000000000000, 0.000000003755) (6100.000000000000, 0.000000004093) (6150.000000000000, 0.000000004260) (6200.000000000000, 0.000000004392) (6250.000000000000, 0.000000004546) (6300.000000000000, 0.000000004754) (6350.000000000000, 0.000000005041) (6400.000000000000, 0.000000005444) (6450.000000000000, 0.000000006054) (6500.000000000000, 0.000000007300) (6550.000000000000, 0.000000015222) (6600.000000000000, 0.000015047700) (6650.000000000000, 0.000000866490) (6700.000000000000, 0.000000516803) (6750.000000000000, 0.000000017319) (6800.000000000000, 0.000000021210) (6850.000000000000, 0.000000028113) (6900.000000000000, 0.000000041092) (6950.000000000000, 0.000000064749) (7000.000000000000, 0.000000115720) (7050.000000000000, 0.000000255408) (7100.000000000000, 0.000000897436) (7150.000000000000, 0.000033594900) (7200.000000000000, 0.000002099730) (7250.000000000000, 0.000000379403) (7300.000000000000, 0.000000463117) (7350.000000000000, 0.000000196780) (7400.000000000000, 0.000000129246) (7450.000000000000, 0.000000098491) (7500.000000000000, 0.000000082155) (7550.000000000000, 0.000000073222) (7600.000000000000, 0.000000068868) (7650.000000000000, 0.000000067819) (7700.000000000000, 0.000000069738) (7750.000000000000, 0.000000074726) (7800.000000000000, 0.000000083467) (7850.000000000000, 0.000000097399) (7900.000000000000, 0.000000119254) (7950.000000000000, 0.000000154325) (8000.000000000000, 0.000000213595) (8050.000000000000, 0.000000322580) (8100.000000000000, 0.000000552645) (8150.000000000000, 0.000001137910) (8200.000000000000, 0.000003456120) (8250.000000000000, 0.000038056500) (8300.000000000000, 0.000031261100) (8350.000000000000, 0.000004457670) (8400.000000000000, 0.000001979950) (8450.000000000000, 0.000001294400) (8500.000000000000, 0.000001979530) (8550.000000000000, 0.000001236340) (8600.000000000000, 0.000001977650) (8650.000000000000, 0.000014011200) (8700.000000000000, 0.000001795700) (8750.000000000000, 0.000000054921) (8800.000000000000, 0.000000005668) (8850.000000000000, 0.000000053349) (8900.000000000000, 0.000000156293) (8950.000000000000, 0.000000433365) (9000.000000000000, 0.000001969780) (9050.000000000000, 0.001050620000) (9100.000000000000, 0.000001744470) (9150.000000000000, 0.000000455501) (9200.000000000000, 0.000000205293) (9250.000000000000, 0.000000116536) (9300.000000000000, 0.000000075434) (9350.000000000000, 0.000000053279) (9400.000000000000, 0.000000042561) (9450.000000000000, 0.000000032173) (9500.000000000000, 0.000000026839) (9550.000000000000, 0.000000023078) (9600.000000000000, 0.000000000444) (9650.000000000000, 0.000000014656) (9700.000000000000, 0.000000015119) (9750.000000000000, 0.000000015092) (9800.000000000000, 0.000000015521) (9850.000000000000, 0.000000019610) (9900.000000000000, 0.000000019286) (9950.000000000000, 0.000000024427) (10000.000000000000, 0.000000038065)
            };
            
        %TFEM
        \addplot[
            color=green!50!black,
            mark = *,
            mark size=1pt
            ]
            coordinates {
               (50.000000000000, 0.000000005172) (100.000000000000, 0.000000005508) (150.000000000000, 0.000000006141) (200.000000000000, 0.000000007210) (250.000000000000, 0.000000009010) (300.000000000000, 0.000000012202) (350.000000000000, 0.000000018496) (400.000000000000, 0.000000033541) (450.000000000000, 0.000000086275) (500.000000000000, 0.000000672658) (550.000000000000, 0.000000892708) (600.000000000000, 0.000000087128) (650.000000000000, 0.000000030508) (700.000000000000, 0.000000015532) (750.000000000000, 0.000000009546) (800.000000000000, 0.000000006605) (850.000000000000, 0.000000004985) (900.000000000000, 0.000000004128) (950.000000000000, 0.000000004649) (1000.000000000000, 0.000000002948) (1050.000000000000, 0.000000002696) (1100.000000000000, 0.000000002575) (1150.000000000000, 0.000000002555) (1200.000000000000, 0.000000002628) (1250.000000000000, 0.000000002805) (1300.000000000000, 0.000000003109) (1350.000000000000, 0.000000003586) (1400.000000000000, 0.000000004321) (1450.000000000000, 0.000000005470) (1500.000000000000, 0.000000007340) (1550.000000000000, 0.000000010595) (1600.000000000000, 0.000000016871) (1650.000000000000, 0.000000031116) (1700.000000000000, 0.000000074269) (1750.000000000000, 0.000000331516) (1800.000000000000, 0.000111332000) (1850.000000000000, 0.000000309668) (1900.000000000000, 0.000000089318) (1950.000000000000, 0.000000044889) (2000.000000000000, 0.000000028875) (2050.000000000000, 0.000000022870) (2100.000000000000, 0.000000033538) (2150.000000000000, 0.000000014526) (2200.000000000000, 0.000000012765) (2250.000000000000, 0.000000011951) (2300.000000000000, 0.000000011668) (2350.000000000000, 0.000000011804) (2400.000000000000, 0.000000012340) (2450.000000000000, 0.000000013311) (2500.000000000000, 0.000000014811) (2550.000000000000, 0.000000017013) (2600.000000000000, 0.000000020207) (2650.000000000000, 0.000000024888) (2700.000000000000, 0.000000031928) (2750.000000000000, 0.000000042939) (2800.000000000000, 0.000000061190) (2850.000000000000, 0.000000094837) (2900.000000000000, 0.000000160163) (2950.000000000000, 0.000000316208) (3000.000000000000, 0.000000816818) (3050.000000000000, 0.000004057920) (3100.000000000000, 0.000614642000) (3150.000000000000, 0.000004099600) (3200.000000000000, 0.000001365080) (3250.000000000000, 0.000000797733) (3300.000000000000, 0.000000610370) (3350.000000000000, 0.000000562301) (3400.000000000000, 0.000000611619) (3450.000000000000, 0.000000803982) (3500.000000000000, 0.000001400630) (3550.000000000000, 0.000004507310) (3600.000000000000, 0.323763000000) (3650.000000000000, 0.000003109520) (3700.000000000000, 0.000000674840) (3750.000000000000, 0.000000264763) (3800.000000000000, 0.000000133648) (3850.000000000000, 0.000000077939) (3900.000000000000, 0.000000050021) (3950.000000000000, 0.000000034418) (4000.000000000000, 0.000000024990) (4050.000000000000, 0.000000018951) (4100.000000000000, 0.000000014903) (4150.000000000000, 0.000000012093) (4200.000000000000, 0.000000010088) (4250.000000000000, 0.000000008626) (4300.000000000000, 0.000000007546) (4350.000000000000, 0.000000006743) (4400.000000000000, 0.000000006147) (4450.000000000000, 0.000000005712) (4500.000000000000, 0.000000005408) (4550.000000000000, 0.000000005214) (4600.000000000000, 0.000000005119) (4650.000000000000, 0.000000005113) (4700.000000000000, 0.000000012964) (4750.000000000000, 0.000000009920) (4800.000000000000, 0.000000005945) (4850.000000000000, 0.000000006590) (4900.000000000000, 0.000000007611) (4950.000000000000, 0.000000009601) (5000.000000000000, 0.000000023973) (5050.000000000000, 0.000000006999) (5100.000000000000, 0.000000011700) (5150.000000000000, 0.000000017046) (5200.000000000000, 0.000000026003) (5250.000000000000, 0.000000044299) (5300.000000000000, 0.000000092634) (5350.000000000000, 0.000000306658) (5400.000000000000, 0.000009637400) (5450.000000000000, 0.000000754541) (5500.000000000000, 0.000000148701) (5550.000000000000, 0.000000062900) (5600.000000000000, 0.000000035497) (5650.000000000000, 0.000000023489) (5700.000000000000, 0.000000017321) (5750.000000000000, 0.000000013964) (5800.000000000000, 0.000000012386) (5850.000000000000, 0.000000012902) (5900.000000000000, 0.000000025614) (5950.000000000000, 0.000000001142) (6000.000000000000, 0.000000002045) (6050.000000000000, 0.000000002794) (6100.000000000000, 0.000000003086) (6150.000000000000, 0.000000003246) (6200.000000000000, 0.000000003387) (6250.000000000000, 0.000000003583) (6300.000000000000, 0.000000004141) (6350.000000000000, 0.000000076768) (6400.000000000000, 0.000000006918) (6450.000000000000, 0.000000009725) (6500.000000000000, 0.000000009999) (6550.000000000000, 0.000000010867) (6600.000000000000, 0.000004156040) (6650.000000000000, 0.000000027304) (6700.000000000000, 0.000000011660) (6750.000000000000, 0.000000014784) (6800.000000000000, 0.000000019625) (6850.000000000000, 0.000000027412) (6900.000000000000, 0.000000040857) (6950.000000000000, 0.000000066180) (7000.000000000000, 0.000000121573) (7050.000000000000, 0.000000275745) (7100.000000000000, 0.000000985492) (7150.000000000000, 0.000029325700) (7200.000000000000, 0.000003144800) (7250.000000000000, 0.000000656288) (7300.000000000000, 0.000000522243) (7350.000000000000, 0.000000259157) (7400.000000000000, 0.000000187549) (7450.000000000000, 0.000000156590) (7500.000000000000, 0.000000144172) (7550.000000000000, 0.000000143879) (7600.000000000000, 0.000000154742) (7650.000000000000, 0.000000179747) (7700.000000000000, 0.000000227817) (7750.000000000000, 0.000000322315) (7800.000000000000, 0.000000533796) (7850.000000000000, 0.000001158590) (7900.000000000000, 0.000004827420) (7950.000000000000, 0.000634050000) (8000.000000000000, 0.000003406070) (8050.000000000000, 0.000000929420) (8100.000000000000, 0.000000440726) (8150.000000000000, 0.000000261173) (8200.000000000000, 0.000000181016) (8250.000000000000, 0.000000138803) (8300.000000000000, 0.000000115308) (8350.000000000000, 0.000000102663) (8400.000000000000, 0.000000097405) (8450.000000000000, 0.000000096935) (8500.000000000000, 0.000001935000) (8550.000000000000, 0.000000182587) (8600.000000000000, 0.000000328303) (8650.000000000000, 0.000002239440) (8700.000000000000, 0.000000493640) (8750.000000000000, 0.000000017593) (8800.000000000000, 0.000000002870) (8850.000000000000, 0.000000016139) (8900.000000000000, 0.000000047834) (8950.000000000000, 0.000000135999) (9000.000000000000, 0.000000609541) (9050.000000000000, 0.002789610000) (9100.000000000000, 0.000000737959) (9150.000000000000, 0.000000188744) (9200.000000000000, 0.000000086280) (9250.000000000000, 0.000000050023) (9300.000000000000, 0.000000033119) (9350.000000000000, 0.000000023950) (9400.000000000000, 0.000000020986) (9450.000000000000, 0.000000015408) (9500.000000000000, 0.000000013038) (9550.000000000000, 0.000000011800) (9600.000000000000, 0.000000001181) (9650.000000000000, 0.000000007192) (9700.000000000000, 0.000000007603) (9750.000000000000, 0.000000007721) (9800.000000000000, 0.000000008049) (9850.000000000000, 0.000000029146) (9900.000000000000, 0.000000010241) (9950.000000000000, 0.000000012988) (10000.000000000000, 0.000000020136)
            };

        %slivers
        \addplot[
            color=orange,
            mark = square*,
            mark size=1pt
            ]
            coordinates {
                (50.000000000000, 0.000000000000) (100.000000000000, 0.000000000000) (150.000000000000, 0.000000000000) (200.000000000000, 0.000000000000) (250.000000000000, 0.000000000000) (300.000000000000, 0.000000000000) (350.000000000000, 0.000000000000) (400.000000000000, 0.000000000000) (450.000000000000, 0.000000000000) (500.000000000000, 0.000000000000) (550.000000000000, 0.000000000000) (600.000000000000, 0.000000000000) (650.000000000000, 0.000000000000) (700.000000000000, 0.000000000000) (750.000000000000, 0.000000000000) (800.000000000000, 0.000000000000) (850.000000000000, 0.000000000000) (900.000000000000, 0.000000000001) (950.000000000000, 0.000000000020) (1000.000000000000, 0.000000000001) (1050.000000000000, 0.000000000000) (1100.000000000000, 0.000000000000) (1150.000000000000, 0.000000000000) (1200.000000000000, 0.000000000000) (1250.000000000000, 0.000000000000) (1300.000000000000, 0.000000000000) (1350.000000000000, 0.000000000000) (1400.000000000000, 0.000000000001) (1450.000000000000, 0.000000000001) (1500.000000000000, 0.000000000001) (1550.000000000000, 0.000000000001) (1600.000000000000, 0.000000000002) (1650.000000000000, 0.000000000005) (1700.000000000000, 0.000000000011) (1750.000000000000, 0.000000000052) (1800.000000000000, 0.000000017536) (1850.000000000000, 0.000000000049) (1900.000000000000, 0.000000000014) (1950.000000000000, 0.000000000007) (2000.000000000000, 0.000000000004) (2050.000000000000, 0.000000000003) (2100.000000000000, 0.000000000002) (2150.000000000000, 0.000000000002) (2200.000000000000, 0.000000000002) (2250.000000000000, 0.000000000002) (2300.000000000000, 0.000000000002) (2350.000000000000, 0.000000000001) (2400.000000000000, 0.000000000001) (2450.000000000000, 0.000000000001) (2500.000000000000, 0.000000000001) (2550.000000000000, 0.000000000002) (2600.000000000000, 0.000000000002) (2650.000000000000, 0.000000000002) (2700.000000000000, 0.000000000002) (2750.000000000000, 0.000000000002) (2800.000000000000, 0.000000000005) (2850.000000000000, 0.000000000008) (2900.000000000000, 0.000000000005) (2950.000000000000, 0.000000000006) (3000.000000000000, 0.000000000007) (3050.000000000000, 0.000000000009) (3100.000000000000, 0.000000000011) (3150.000000000000, 0.000000000015) (3200.000000000000, 0.000000000021) (3250.000000000000, 0.000000000031) (3300.000000000000, 0.000000000048) (3350.000000000000, 0.000000000080) (3400.000000000000, 0.000000000149) (3450.000000000000, 0.000000000320) (3500.000000000000, 0.000000000888) (3550.000000000000, 0.000000004503) (3600.000000000000, 0.000509770000) (3650.000000000000, 0.000000007791) (3700.000000000000, 0.000000002753) (3750.000000000000, 0.000000001828) (3800.000000000000, 0.000000001663) (3850.000000000000, 0.000000001943) (3900.000000000000, 0.000000003034) (3950.000000000000, 0.000000007949) (4000.000000000000, 0.000000208507) (4050.000000000000, 0.000000013540) (4100.000000000000, 0.000000002313) (4150.000000000000, 0.000000000862) (4200.000000000000, 0.000000000433) (4250.000000000000, 0.000000000257) (4300.000000000000, 0.000000000170) (4350.000000000000, 0.000000000121) (4400.000000000000, 0.000000000092) (4450.000000000000, 0.000000000073) (4500.000000000000, 0.000000000061) (4550.000000000000, 0.000000000052) (4600.000000000000, 0.000000000047) (4650.000000000000, 0.000000000044) (4700.000000000000, 0.000000000131) (4750.000000000000, 0.000000000048) (4800.000000000000, 0.000000000036) (4850.000000000000, 0.000000000037) (4900.000000000000, 0.000000000039) (4950.000000000000, 0.000000000042) (5000.000000000000, 0.000000000043) (5050.000000000000, 0.000000000060) (5100.000000000000, 0.000000000072) (5150.000000000000, 0.000000000093) (5200.000000000000, 0.000000000132) (5250.000000000000, 0.000000000211) (5300.000000000000, 0.000000000418) (5350.000000000000, 0.000000001317) (5400.000000000000, 0.000000039665) (5450.000000000000, 0.000000002914) (5500.000000000000, 0.000000000541) (5550.000000000000, 0.000000000251) (5600.000000000000, 0.000000000119) (5650.000000000000, 0.000000000073) (5700.000000000000, 0.000000000049) (5750.000000000000, 0.000000000035) (5800.000000000000, 0.000000000026) (5850.000000000000, 0.000000000019) (5900.000000000000, 0.000000000010) (5950.000000000000, 0.000000000028) (6000.000000000000, 0.000000000017) (6050.000000000000, 0.000000000014) (6100.000000000000, 0.000000000013) (6150.000000000000, 0.000000000012) (6200.000000000000, 0.000000000011) (6250.000000000000, 0.000000000010) (6300.000000000000, 0.000000000010) (6350.000000000000, 0.000000000010) (6400.000000000000, 0.000000000010) (6450.000000000000, 0.000000000010) (6500.000000000000, 0.000000000011) (6550.000000000000, 0.000000000012) (6600.000000000000, 0.000000000840) (6650.000000000000, 0.000000000014) (6700.000000000000, 0.000000000014) (6750.000000000000, 0.000000000016) (6800.000000000000, 0.000000000019) (6850.000000000000, 0.000000000023) (6900.000000000000, 0.000000000029) (6950.000000000000, 0.000000000040) (7000.000000000000, 0.000000000058) (7050.000000000000, 0.000000000095) (7100.000000000000, 0.000000000208) (7150.000000000000, 0.000000002074) (7200.000000000000, 0.000000000054) (7250.000000000000, 0.000000002963) (7300.000000000000, 0.000000002462) (7350.000000000000, 0.000000000360) (7400.000000000000, 0.000000000145) (7450.000000000000, 0.000000000080) (7500.000000000000, 0.000000000051) (7550.000000000000, 0.000000000036) (7600.000000000000, 0.000000000027) (7650.000000000000, 0.000000000022) (7700.000000000000, 0.000000000018) (7750.000000000000, 0.000000000015) (7800.000000000000, 0.000000000013) (7850.000000000000, 0.000000000012) (7900.000000000000, 0.000000000011) (7950.000000000000, 0.000000000010) (8000.000000000000, 0.000000000010) (8050.000000000000, 0.000000000009) (8100.000000000000, 0.000000000009) (8150.000000000000, 0.000000000009) (8200.000000000000, 0.000000000009) (8250.000000000000, 0.000000000009) (8300.000000000000, 0.000000000010) (8350.000000000000, 0.000000000010) (8400.000000000000, 0.000000000011) (8450.000000000000, 0.000000000012) (8500.000000000000, 0.000000014877) (8550.000000000000, 0.000000000018) (8600.000000000000, 0.000000000018) (8650.000000000000, 0.000000000015) (8700.000000000000, 0.000000000039) (8750.000000000000, 0.000000000044) (8800.000000000000, 0.000000000059) (8850.000000000000, 0.000000000089) (8900.000000000000, 0.000000000153) (8950.000000000000, 0.000000000339) (9000.000000000000, 0.000000001328) (9050.000000000000, 0.000005641740) (9100.000000000000, 0.000000001432) (9150.000000000000, 0.000000000359) (9200.000000000000, 0.000000000163) (9250.000000000000, 0.000000000094) (9300.000000000000, 0.000000000063) (9350.000000000000, 0.000000000047) (9400.000000000000, 0.000000000058) (9450.000000000000, 0.000000000031) (9500.000000000000, 0.000000000027) (9550.000000000000, 0.000000000030) (9600.000000000000, 0.000000000000) (9650.000000000000, 0.000000000011) (9700.000000000000, 0.000000000012) (9750.000000000000, 0.000000000013) (9800.000000000000, 0.000000000013) (9850.000000000000, 0.000000000270) (9900.000000000000, 0.000000000016) (9950.000000000000, 0.000000000019) (10000.000000000000, 0.000000000028)
            };
        
        \node [orange, align=center] at (rel axis cs: 
            0.26, 0.18) { Slivers \\ FEM };

        \node [blue, align=center] at (rel axis cs: 
            0.163, 0.77) { Regular\\ FEM };

        \node [green!50!black, align=center] at (rel axis cs: 
            0.575, 0.862) { Slivers\\ TFEM };

        \end{axis}
    \end{tikzpicture}
    
    \caption{Harmonic study of the 3D vibro-acoustic problem for the mean displacement norm of the top surface (top) and the curvature indicator (bottom) of the beam. The analysis is performed on a regular mesh (blue line, triangle), a mesh with slivers (orange line, square), and a mesh with slivers and TFEM (green line, circle).}
    \label{fig:vibro_graph}
\end{figure}

%% file: conclusion.tex
\section{Conclusion}

Sliver elements have long been regarded as one of the principal obstacles to the reliable use of three-dimensional Delaunay tetrahedralizations in finite element simulations. Their presence is traditionally associated with the violation of classical mesh-quality criteria, poor interpolation properties, locking phenomena, and potential numerical instabilities. The purpose of this work was to revisit these issues from the perspective of the finite element solution itself rather than from purely geometric considerations.

We first recalled that, unlike in two dimensions, where the min--max angle property prevents the appearance of degenerate elements, three-dimensional Delaunay tetrahedralizations may naturally contain slivers. A concise review of existing sliver-removal strategies showed that, despite decades of research, eliminating slivers remains a challenging and computationally expensive task, especially in large-scale industrial meshes.

The analysis of the stiffness matrices associated with degenerate tetrahedra revealed that the essential difficulty does not originate from interpolation considerations alone. Flat tetrahedra generate high-energy modes that behave as increasingly strong constraints on the discrete solution as the Jacobian tends to zero. For isolated degenerate elements, these constraints remain local and do not prevent convergence. In fact, as previously established by TFEM theory and further illustrated in this work, finite element convergence can still be recovered when the Jacobian determinant is simply bounded below by machine precision. This observation provides another illustration that classical geometric conditions such as the maximum-angle condition are sufficient but not necessary for finite element convergence.

The situation changes fundamentally when degenerate elements interact. We showed that locking phenomena arise when the constraints induced by individual degenerate elements couple together through clusters or sheets of slivers. In such configurations, the discrete solution becomes artificially restricted (basically losing a dimension for the representation of the solution) over entire regions of the domain, leading to severe loss of accuracy and, ultimately, loss of convergence. The analysis therefore identifies the true source of the difficulty: not the presence of isolated slivers, but the formation of connected structures of degenerate elements.

To address this issue, we employed the Tempered Finite Element Method (TFEM), which limits the growth of the singular part of the stiffness matrix by introducing a lower bound on the Jacobian determinant. This approach can be interpreted as a vanishing added-volume correction, in which the effective element volume is constrained to remain above the threshold ($J_{\min}$). The analysis of sliver bands shows that the appropriate scaling is
\[
J_{\min} = \frac{h^4}{H},
\]
where \(h\) denotes the local mesh size and \(H\) represents the characteristic length scale of the physical features that must be resolved. Importantly, the method does not require a precise estimate of \(H\). The numerical experiments demonstrate that the error is only weakly sensitive to this parameter in the vicinity of its optimum: variations over approximately one order of magnitude around the optimal value still produce errors comparable to those obtained on regular meshes. In practice, \(H\) should therefore be viewed as an order-of-magnitude estimate of the smallest relevant physical length scale rather than as a finely tuned parameter. With this choice, TFEM removes the artificial locking constraints while preserving the physically relevant solution modes, thereby recovering the optimal convergence properties predicted by the TFEM framework.

Finally, the method was assessed on a broad spectrum of representative physical problems, including incompressible Navier--Stokes flows, Cahn--Hilliard phase-field simulations, transient wave propagation, and vibro-acoustic fluid--structure interaction. Across all considered applications, standard finite elements exhibited the expected locking behaviour in the presence of sliver clusters, whereas TFEM consistently recovered accurate and physically meaningful solutions. These results demonstrate that TFEM provides a simple, robust, and broadly applicable alternative to geometric sliver-removal procedures, allowing computations to be performed directly on meshes that would otherwise be considered unsuitable for finite element analysis.

%% file: minmax.tex
\section{MinMax proprety of Delaunay triangulations}
\label{annex:1}
\begin{figure}
\begin{center}
\includegraphics[width=.45\linewidth]{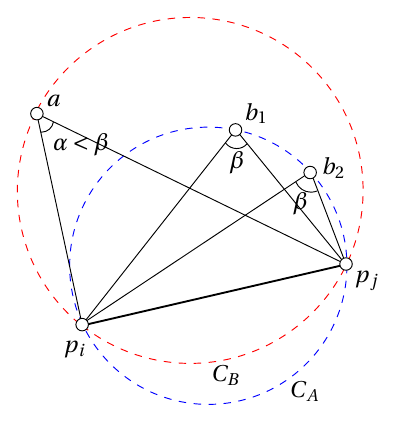}    
\end{center}
\caption{\label{fig:minmax}}    
\end{figure}

An edge in a two--dimensional triangulation is said to be \emph{locally Delaunay}
if it satisfies the empty--circumcircle condition with respect to the two triangles
that share it: the vertex opposite to the edge in each triangle must lie outside
the circumcircle of the other triangle. 

A triangulation in which \emph{every} edge
is locally Delaunay is then \emph{globally Delaunay}, since the local empty--circle
property is exactly the condition that characterizes the Delaunay triangulation of
a point set. Starting from any admissible triangulation, one can enforce this
property by performing a sequence of \emph{edge flips}: whenever an edge violates
the local Delaunay criterion, flipping it restores local Delaunayhood (see Figure \ref{fig:locDel1}). 
It is a classical result that repeated flips always converge to the unique Delaunay
triangulation of the point set.  The convergence of the flip procedure is immediate once the following property
has been proved: for any triangulation \(T\) that is not Delaunay, there exists
an edge \(e \in T\) whose flip strictly increases the local empty-circle 
quality (equivalently, strictly decreases the maximal angle). As each flip 
produces a strict improvement of a globally bounded functional, no infinite 
sequence of improving flips is possible. Hence the process must terminate on 
a triangulation that admits no improving flip, i.e., the Delaunay triangulation.

Let us thus prove that performing a flip that turns a
non--Delaunay edge into a locally Delaunay one strictly increases the minimum
angle in the affected quadrilateral, ensuring monotonic improvement of the
triangulation quality. 

Figure \ref{fig:minmax} illustrates the inscribed angle theorem. 
The points $b_1$ and $b_2$ lie on a circle $C_B$ passing through
$p_i$ and $p_j$, while the point $a$ lies on a larger circle $C_A$ that also
contains $p_i$ and $p_j$. Thales' theorem states that the inscribed angle
subtended by a fixed chord---here the segment $(p_i,p_j)$---depends only on the
position of the viewpoint along its circle: the farther the viewpoint lies from
the chord, the smaller the corresponding inscribed angle becomes. Since $a$
belongs to a circle strictly larger than the one containing $b_1$ and $b_2$, the
angle $\alpha = \angle a\,p_i\,p_j$ must be smaller than 
$\beta = \angle b\,p_i\,p_j$. The figure therefore expresses the classical fact
that inscribed angles decrease when their apex moves away from the chord, which
is precisely the inequality $\alpha < \beta$.

\begin{figure}
    \centering
    \input{locDel1}
    \caption{Edge flip. Edge $\bm{p}_k\bm{p}_i$ is not locally Delaunay because circles
    $\bm{p}_k\bm{p}_i\bm{p}_j$ and $\bm{p}_k\bm{p}_i\bm{p}_l$ are not empty. Flipping
    the edge to $\bm{p}_j\bm{p}_l$ solves the problem and creates two empty circles $\bm{p}_j\bm{p}_l\bm{p}_i$ and $\bm{p}_j\bm{p}_l\bm{p}_k$.}
    \label{fig:locDel1}
\end{figure}

Thales' theorem applied to each side of the quadrilateral 
$(p_i,p_j,p_k,p_l)$ of Figure \ref{fig:locDel1} yields the four inequalities
\[
\text{Side } p_ip_j 
\;\;\longrightarrow\;\;
{\color{red}{\angle(p_i,p_k,p_j)}} < \angle(p_i,p_l,p_j),
\]
\[
\text{Side } p_jp_k 
\;\;\longrightarrow\;\;
{\color{red}{\angle(p_j,p_i,p_k)}} < \angle(p_j,p_l,p_k),
\]
\[
\text{Side } p_kp_l
\;\;\longrightarrow\;\;
{\color{red}{\angle(p_k,p_i,p_l)}} < \angle(p_k,p_j,p_l),
\]
\[
\text{Side } p_lp_i
\;\;\longrightarrow\;\;
{\color{red}{\angle(p_l,p_k,p_i)}} < \angle(p_l,p_j,p_i).
\]
Assume that $\color{red}\angle(p_l,p_k,p_i)$ is the smallest of the four 
angles written in red just above. The previous inequalities
show that $\angle(p_i,p_l,p_j)$, $\angle(p_j,p_l,p_k)$, $\angle(p_k,p_j,p_l)$
and $\angle(p_l,p_j,p_i)$ are all larger than $\color{red}\angle(p_l,p_k,p_i)$.

Two additional angles must also be compared to
$\color{red}\angle(p_j,p_i,p_l)$, namely
$\angle(p_j,p_k,p_l)$
and
$\angle(p_l,p_i,p_j)$.
We have obviously $\angle(p_j,p_k,p_l) = \angle(p_j,p_k,p_i)+\angle(p_i,p_k,p_l) > {\color{red} \angle(p_i,p_k,p_l)}$.
Finally $\angle(p_l,p_i,p_j) > \angle(p_l,p_i,p_k) = {\color{red}\angle(p_i,p_k,p_l)}$ which
demonstrate that making an edge locally Delaunay locally increases the
minimum angle of the two neighboring triangles.

%% file: locDel1.tex
\begin{tabular}{cc}
\begin{tikzpicture}[scale=1]
  % points indépendants

  \pgfmathsetmacro{\xi}{3.0}
  \pgfmathsetmacro{\yi}{0.2}
  \pgfmathsetmacro{\xj}{2.7}
  \pgfmathsetmacro{\yj}{2.6}
  \pgfmathsetmacro{\xk}{0.1}
  \pgfmathsetmacro{\yk}{0.6}
  \pgfmathsetmacro{\xl}{1.9}
  \pgfmathsetmacro{\yl}{-0.1}

  \coordinate (pi) at (\xi,\yi);
  \coordinate (pk) at (\xk,\yk);
  \coordinate (pj) at (\xj,\yj);
  \coordinate (pl) at (\xl,\yl);

  \node at ($(pk)+(1.3,0.15)$) {${\color{red}\angle(\bm{p}_j,\bm{p}_k,\bm{p}_i)}$};
  
  \newcommand{\computecircum}[6]{%
  % dénominateur
    \pgfmathsetmacro{\den}{2*(#1*(#4-#6)+#3*(#6-#2)+#5*(#2-#4))}%
  % centre
    \pgfmathsetmacro{\Ux}{((#1*#1+#2*#2)*(#4-#6)
                       +(#3*#3+#4*#4)*(#6-#2)
                       +(#5*#5+#6*#6)*(#2-#4))/\den}%
    \pgfmathsetmacro{\Uy}{((#1*#1+#2*#2)*(#5-#3)
                       +(#3*#3+#4*#4)*(#1-#5)
                       +(#5*#5+#6*#6)*(#3-#1))/\den}%
  % rayon
    \pgfmathsetmacro{\R}{sqrt((\Ux-#3)^2+(\Uy-#4)^2)}% 
  }

  \computecircum{\xi}{\yi}{\xj}{\yj}{\xk}{\yk}
  \draw[red,dashed] (\Ux,\Uy) circle (\R);
  \computecircum{\xi}{\yi}{\xl}{\yl}{\xk}{\yk}
  \draw[red,dashed] (\Ux,\Uy) circle (\R);
%  \computecircum{\xj}{\yj}{\xl}{\yl}{\xk}{\yk}
%  \draw[blue,dashed] (\Ux,\Uy) circle (\R);
%  \computecircum{\xi}{\yi}{\xl}{\yl}{\xj}{\yj}
%  \draw[blue,dashed] (\Ux,\Uy) circle (\R);

  % quadrilatère
  \draw (pi) -- (pj) -- (pk) -- (pl) -- cycle;
  % diagonale pi-pk
  \draw[red,thick, dashed] (pi) -- (pk);
%  \draw[blue,thick, dashed] (pl) -- (pj);

  % cercles circonscrits

  % points + labels au-dessus
  \foreach \P/\name in {pi/$\bm{p_i}$,pj/$\bm{p_j}$,pk/$\bm{p_k}$,pl/$\bm{p_l}$} {
    \fill (\P) circle (2pt);
    \node[above] at (\P) {\name};
  }
\end{tikzpicture}
&
\begin{tikzpicture}[scale=1]
  % points indépendants

  \pgfmathsetmacro{\xi}{3.0}
  \pgfmathsetmacro{\yi}{0.2}
  \pgfmathsetmacro{\xj}{2.7}
  \pgfmathsetmacro{\yj}{2.6}
  \pgfmathsetmacro{\xk}{0.1}
  \pgfmathsetmacro{\yk}{0.6}
  \pgfmathsetmacro{\xl}{1.9}
  \pgfmathsetmacro{\yl}{-0.1}

  \coordinate (pi) at (\xi,\yi);
  \coordinate (pk) at (\xk,\yk);
  \coordinate (pj) at (\xj,\yj);
  \coordinate (pl) at (\xl,\yl);

  \newcommand{\computecircum}[6]{%
  % dénominateur
    \pgfmathsetmacro{\den}{2*(#1*(#4-#6)+#3*(#6-#2)+#5*(#2-#4))}%
  % centre
    \pgfmathsetmacro{\Ux}{((#1*#1+#2*#2)*(#4-#6)
                       +(#3*#3+#4*#4)*(#6-#2)
                       +(#5*#5+#6*#6)*(#2-#4))/\den}%
    \pgfmathsetmacro{\Uy}{((#1*#1+#2*#2)*(#5-#3)
                       +(#3*#3+#4*#4)*(#1-#5)
                       +(#5*#5+#6*#6)*(#3-#1))/\den}%
  % rayon
    \pgfmathsetmacro{\R}{sqrt((\Ux-#3)^2+(\Uy-#4)^2)}% 
  }

%  \computecircum{\xi}{\yi}{\xj}{\yj}{\xk}{\yk}
%  \draw[red,dashed] (\Ux,\Uy) circle (\R);
%  \computecircum{\xi}{\yi}{\xl}{\yl}{\xk}{\yk}
%  \draw[red,dashed] (\Ux,\Uy) circle (\R);
  \computecircum{\xj}{\yj}{\xl}{\yl}{\xk}{\yk}
  \draw[blue,dashed] (\Ux,\Uy) circle (\R);
  \computecircum{\xi}{\yi}{\xl}{\yl}{\xj}{\yj}
  \draw[blue,dashed] (\Ux,\Uy) circle (\R);

  % quadrilatère
  \draw (pi) -- (pj) -- (pk) -- (pl) -- cycle;
  % diagonale pi-pk
%  \draw[red,thick, dashed] (pi) -- (pk);
  \draw[blue,thick, dashed] (pl) -- (pj);

  % points + labels au-dessus
  \foreach \P/\name in {pi/$\bm{p_i}$,pj/$\bm{p_j}$,pk/$\bm{p_k}$,pl/$\bm{p_l}$} {
    \fill (\P) circle (2pt);
    \node[above] at (\P) {\name};
  }
\end{tikzpicture}
\end{tabular}